\declaretheorem[style=definition,numberwithin=section]{definition}
\declaretheorem[style=definition,qed=$\triangleleft$,sibling=definition]{example}
\declaretheorem[style=definition,qed=$\triangleright$,sibling=definition]{remark}
\declaretheorem[sibling=definition]{theorem}
\declaretheorem[sibling=definition]{lemma}
\declaretheorem[sibling=definition]{proposition}
\declaretheorem[sibling=definition]{conjecture}
\declaretheorem[sibling=definition]{corollary}
\let\sp\relax
\DeclareMathOperator{\spec}{spec}
\DeclareMathOperator{\spf}{spf}
\DeclareMathOperator{\sp}{sp}
\DeclareMathOperator{\spa}{spa}
\DeclareMathOperator{\dimension}{dim}
\DeclareMathOperator{\rank}{rank}
\newcommand{\notp}{{ \{p\} }}
\newcommand{\sset}{\ensuremath{\EuScript{S}}}
\newcommand{\ssettrunc}{\ensuremath{\sset^{<\infty}}}
\newcommand{\ssetfc}{\ensuremath{\sset^{\mathrm{fc}}}}
\newcommand{\ssetpfc}{\ensuremath{\sset^{p-\mathrm{fc}}}}
\newcommand{\ssetnotpfc}{\ensuremath{\sset^{\notp-\mathrm{fc}}}}
\newcommand{\prosset}{\ensuremath{\mathrm{pro}\, \, \sset}}
\newcommand{\prossettrunc}{\ensuremath{\mathrm{pro}^{\natural}\sset}}
\newcommand{\profsset}{\ensuremath{\sset^{\wedge}}}
\newcommand{\profssetp}{\ensuremath{\sset^{\wedge}_{p}}}
\newcommand{\profssetnotp}{\ensuremath{\sset^{\wedge}_{\notp}}}
\newcommand{\ltop}{\ensuremath{\EuScript{LT}\mathrm{op}}}
\newcommand{\rtop}{\ensuremath{\EuScript{RT}\mathrm{op}}}
\newcommand{\procat}{\ensuremath{\mathrm{pro} \,\,}}
\DeclareMathOperator{\site}{\mathbf{site}}
\DeclareMathOperator{\topos}{\uptau}
\DeclareMathOperator{\ootopos}{\uptau_{\infty}}
\DeclareMathOperator{\hootopos}{\uptau_{\infty}^{\wedge}}
\DeclareMathOperator{\Ver}{Ver}
\DeclareMathOperator{\shape}{sh}
\DeclareMathOperator{\protruncshape}{sh^{\natural}}
\DeclareMathOperator{\profiniteshape}{{sh}^{\wedge}}
\newcommand{\pprofiniteshape}{\ensuremath{\profiniteshape_p}}
\newcommand{\notpprofiniteshape}{\ensuremath{\profiniteshape_\notp}}
\DeclareMathOperator{\ettoptype}{\acute{e}t}
\DeclareMathOperator{\profettoptype}{\widehat{\acute{e}t}}
\DeclareMathOperator{\proffkettoptype}{\widehat{ak\acute{e}t}}
\DeclareMathOperator{\qettoptype}{q\acute{e}t}
\DeclareMathOperator{\profqettoptype}{\widehat{q\acute{e}t}}
\DeclareMathOperator*{\colim}{colim}
\DeclareMathOperator*{\hocolim}{hocolim}
\DeclareMathOperator{\kernel}{ker}
\DeclareMathOperator{\map}{Map}
\DeclareMathOperator{\cosk}{cosk}
\title{Etale homotopy theory of non-archimedean analytic spaces}
\author{Joe Berner}
\date{\today}
\begin{document}
\maketitle

\begin{abstract}
We review the shape theory of $\infty$-topoi, and relate it with the usual cohomology of locally constant sheaves. Additionally, a new localization of profinite spaces is defined which allows us to extend the \'etale realization functor of Isaksen. We apply these ideas to define an \'{e}tale homotopy type functor $\ettoptype(\mathcal{X})$ for Berkovich's non-archimedean analytic spaces $\mathcal{X}$ over a complete non-archimedean field $K$ and prove some properties of the construction. We compare the \'etale homotopy types coming from Tate's rigid spaces, Huber's adic spaces, and rigid models when they are all defined.
\end{abstract}

\section{Introduction}

The \'etale homotopy type of an algebraic variety is one of many tools developed in the last century to use topological machinery to attack algebraic questions. For example sheaves and their cohomology, topoi, spectral sequences, Galois categories, algebraic $K$-theory, and Chow groups. The \'etale homotopy type enriches \'etale cohomology and torsor data from a collection of groups into a pro-space. This allows us to almost directly apply results from algebraic topology, with the book-keeping primarily coming from the difference between spaces and pro-spaces. In this paper, we apply this in the context of non-archimedean geometry. The major results are two comparison theorems for the \'etale homotopy type of non-archimedean analytic spaces, and a theorem indentifying the homotopy fiber of a smooth proper map with the \'etale homotopy type of any geometric fiber. The first comparison theorem concerns analytifications of varieties, and states that after profinite completion away from the characteristic of the field that the \'etale homotopy type of a variety agrees with that of its analytification. The second concerns formal models of rigid spaces, and states that after profinite completion away from the characteristic of the residue field that the kummer \'etale homotopy type of a suitable formal scheme agrees with that of its generic fiber.

Because of the syncretic nature of the topic, we have tried to include detailed proofs and references. As many of the documents cited are preprints, we have also reproduced several definitions and theorems. This has the unfortunate effect of engorging this document, but it is hoped to be relatively self-contained.

\subsection{Conventions and notations}

\begin{tabular}{l l}
	$\spec R$	& The spectrum of a commutative ring with unit $R$ \\
    $\spf R$ & The formal spectrum of a topological ring $R$ \\
    $\sp R$ & The analytic spectrum of an affinoid domain $R$ \\
    $\spa R$ & Huber's adic spectrum of an affinoid ring $R=(R^{\triangleright},R^+)$ \\
    $\topos S$ & The topos of sheaves associated to a site $S$ \\
    $\ootopos S$ & The $\infty$-topos associated to a site $S$\\
    $\hootopos S$ & The hypercomplete $\infty$-topos associated to a site $S$\\
    $\procat\EuScript{C}$ & The pro-category of an $\infty$-category $\EuScript{C}$\\
    $\sset$ & The $\infty$-category of $\infty$-groupoids\\
    $\prosset$ & The pro-category of the $\infty$-category of simplicial sets \\
    $\mathbbm{1}_C$ & The terminal object of a $\infty$- or usual category $C$ \\
    $\rtop$ & The $\infty$-category of $\infty$-topoi, morphisms are direct image functors \\
    $\ltop$ & The $\infty$-category of $\infty$-topoi, morphisms are inverse image functors \\
    $\ettoptype X$ & The \'etale homotopy type of a scheme $X$\\
\end{tabular}

\hfill

The end of proofs will be denoted with a tombstone $\square$, the end of remarks with a triangle pointing right $\triangleright$, and the end of examples with a triangle pointing left $\triangleleft$.

Following the style of homotopy theorists, we will always denote colimits via $\colim$, and \emph{never} via the direct limit notation $\lim_{\rightarrow}$. The same style will be used for homotopy colimits, where we write $\hocolim$.

The term $\infty$-category should always be interpreted in the model of $(\infty,1)$-categories given by \emph{quasicategories}, whose theory is carefully worked out in \cite{lurie2009higher}.

We will use the term \emph{quasi-smooth} to mean the same thing as what is called \emph{almost smooth} in \cite{berkovich1993etale}. The hope here is to emphasize its similarity to the notion of `quasi-\'etale' from the later paper \cite{berkovich1996vanishing} and its technical dissimilarity from the more recent notion of `almost $k^\circ$-log smooth'. Ducros has also introduced a class of morphism of the same name in \cite{ducrosbook}. While our class is subsumed by his, we do not know if the converse is true.

As a general visual organizing principle, schemes will be assigned symbols in the usual italicized font, formal schemes will be assigned symbols in Fraktur, and non-archimedean analytic spaces will be given calligraphic symbols. As a rule of thumb, usual 1-categories will be given symbols in the usual italicized format, $\infty$-categories in euscript. We will frequently use the phrase ``$\EuScript{C}$ is modeled by $M$'' to mean that some $\infty$-category $\EuScript{C}$ is the homotopy coherent nerve of the simplicial model category $M$.

\begin{remark}
Let $\EuScript{C}$ be an accessible $\infty$-category that has finite limits, then the pro-category of $\EuScript{C}$ is equivalent to the opposite category of the full subcategory of left exact and accessible functors from $\EuScript{C}$ to $\sset$.

\[ \mathrm{Pro}(\EuScript{C}) \simeq \mathrm{Fun}^{l.acc}(\EuScript{C},\sset)^{op} \]

Since $\infty$-functor categories are only well behaved if one restricts to accessible functors between accessible categories, we will often simply write $\mathrm{Fun}^{lex}(\EuScript{C},\sset)^{op}$ for the $\infty$-category on the right hand side. The `constant diagram' functor $c$ is simply the co-Yoneda functor $cX = \map_\EuScript{C}(X,-)$.

See Proposition 3.1.6 of \cite{xiii2011rational} for details of the proof of this equivalence.
\end{remark}

Let $S$ be a noetherian adic ring and let $T$ be an adic $S$-algebra. We will say that $T$ \emph{is topologically of finite type} over $S$ if for any ideal of definition $\mathfrak{s} \triangleleft S$ that $\mathfrak{s}T$ is an ideal of definition for $T$ and $T/\mathfrak{s}T$ is finitely generated over $R$. In particular if $S$ is a discretely valued ring with maximal ideal $\mathfrak{m} \triangleleft S$ the algebra $T=S[[X]]$ with the ideal of definition $\mathfrak{m}+(x)$ is \emph{not} topologically of finite type over $S$.

\subsection{Summary and Structure}

Section 1 is primarily homotopy theoretic and $\infty$-categorical in nature. We start by introducing the concept of the shape of a site or of a topos and explicitly compare the $\infty$-categorical definition with classical ones. The shape is most naturally a pro-space, however it is natural to consider localizations of the category of pro-spaces. We re-introduce several localizations, and different types of equivalences of shapes corresponding to those localizations. We introduce a new $\infty$-category of $\notp$-profinite spaces, along with completions, modeling completion away from a prime $p$. This gives an easy proof of the following theorem.

\begin{theorem}[Theorem \ref{notpprofinitepi1}]
Let $X$ be a profinite space with $X^{\vee}_{\notp}$ its $\notp$-profinite completion and $x \in X$ a point. Then, the induced map $\pi_1(X,x) \rightarrow \pi_1(X^{\vee}_{\notp},x)$ witnesses the latter pro-group as the maximal prime-to-$p$ completion of the former profinite group.
\end{theorem}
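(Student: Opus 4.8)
The plan is to reduce the statement to a fact about pro-groups by passing through the classifying space functor, and then read off the conclusion from the universal properties of the two completions. Since $\pi_1(-,x)$ only sees the connected component of $x$, and $\notp$-profinite completion induces an isomorphism on $\pi_0$ (both $\pi_0$ and $(-)^{\vee}_{\notp}$ are left adjoints, and $\mathrm{Pro}(\mathrm{FinSet}) \subseteq \profssetnotp$, so $\pi_0 \circ (-)^{\vee}_{\notp} \simeq \pi_0$ by uniqueness of left adjoints), we may and do assume $X$ is connected, pointed at $x$; then $X^{\vee}_{\notp}$ is connected and pointed at the image of $x$.

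Next I would set up the group-theoretic shadow on both sides. Inside $\profsset$, $1$-truncation is left adjoint to the inclusion of $1$-truncated objects, and the latter $\infty$-category is identified with $\mathrm{Pro}(\mathrm{FinGrp})$ via $Y \mapsto \pi_1(Y,y)$ with inverse the classifying space functor $B$ (this is the levelwise description of $\pi_1$ of a profinite space as a profinite group, cf.\ the earlier section / Quick). Running the identical argument inside $\profssetnotp$ identifies the $1$-truncated pointed connected $\notp$-profinite spaces with $\mathrm{Pro}(\mathrm{FinGrp}^{p'})$, the pro-category of finite groups of order prime to $p$. The crucial compatibility is that for a finite group $K$ of order prime to $p$ the space $BK = K(K,1)$ is $\notp$-finite, so $B$ carries $\mathrm{Pro}(\mathrm{FinGrp}^{p'})$ into the reflective subcategory $\profssetnotp \subseteq \profsset$.

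With this in hand the computation is short. For any finite group $K$ of order prime to $p$, the two adjunctions above together with the universal property of $\notp$-completion (applicable because $BK \in \profssetnotp$) give natural isomorphisms
\[
\hom_{\mathrm{Pro}(\mathrm{Grp})}\!\bigl(\pi_1(X,x),K\bigr) \;\cong\; \pi_0\map_{\profsset}(X,BK) \;\cong\; \pi_0\map_{\profssetnotp}(X^{\vee}_{\notp},BK) \;\cong\; \hom_{\mathrm{Pro}(\mathrm{Grp})}\!\bigl(\pi_1(X^{\vee}_{\notp},x),K\bigr),
\]
and by construction the resulting bijection is precomposition with the canonical map $\pi_1(X,x)\to\pi_1(X^{\vee}_{\notp},x)$. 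Since $X^{\vee}_{\notp}$ is $\notp$-profinite, its $\pi_1$ is itself a pro-(prime-to-$p$) group, and since every such group is a cofiltered limit of finite groups $K$ as above, the displayed bijections say exactly that $\pi_1(X,x)\to\pi_1(X^{\vee}_{\notp},x)$ is initial among maps from $\pi_1(X,x)$ into pro-(prime-to-$p$) groups. Unwinding, $\pi_1(X^{\vee}_{\notp},x)\cong \varprojlim_{N}\pi_1(X,x)/N$, the limit over open normal subgroups $N$ whose (finite) quotient has order prime to $p$, i.e.\ the maximal prime-to-$p$ completion.

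I expect the only real work to be bookkeeping rather than a conceptual obstacle: pinning down the adjunction $\tau_{\le 1}\dashv(\text{inclusion})$ at the profinite level and its identification with $\pi_1\dashv B$ (in particular that $B$ of a profinite group is a profinite space and that $\pi_1$ of a profinite space agrees with the levelwise profinite group), and verifying the key lemma that $B$ preserves the prime-to-$p$ condition so that the two localizations are compatible. If one prefers to avoid the adjunction, the same conclusion follows hands-on: present $X$ by a cofiltered diagram $\{X_i\}$ of connected pointed $\pi$-finite spaces, note $\notp$-completion is computed by $\notp$-completing each $X_i$ and that $\pi_1(X_i^{\vee}_{\notp})$ is the maximal prime-to-$p$ quotient of the finite group $\pi_1(X_i)$, and then $\pi_1(X^{\vee}_{\notp},x)=\varprojlim_i \pi_1(X_i)^{(p')}=\bigl(\varprojlim_i \pi_1(X_i)\bigr)^{(p')}=\pi_1(X,x)^{(p')}$, the middle equality because the maximal prime-to-$p$ quotient, being a levelwise left adjoint on pro-categories, commutes with cofiltered limits.
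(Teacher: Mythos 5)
Your argument is correct in outline, but it takes a genuinely different route from the paper. The paper works level by level on a presentation $X_\bullet$ by $\pi$-finite spaces: for each $i$ it builds a space $N_pX_i$ (a homotopy pullback of $X_i \to B\pi_1X_i \leftarrow BN_p\pi_1X_i$, where $N_p\pi_1X_i$ is the normal subgroup generated by the Sylow $p$-subgroups), forms the cofiber sequence $N_pX_\bullet \to X_\bullet \to CX_\bullet$ in Quick's model structure, and argues that $\notp$-completion kills $\pi_1N_pX_i$ because a quasi-$p$-group admits no nontrivial maps to groups of order prime to $p$; thus the completion exactly kills the $p$-part of each $\pi_1X_i$. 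You instead argue purely by universal properties: since $BK\in\ssetnotpfc$ for every finite $K$ of order prime to $p$ (one of the defining generators), the localization adjunction of Lemma \ref{pro-categorylemma} gives natural bijections $\hom(\pi_1(X,x),K)\cong\pi_0\map_{\profsset}(X,BK)\cong\pi_0\map_{\profssetnotp}(X^{\vee}_{\notp},BK)\cong\hom(\pi_1(X^{\vee}_{\notp},x),K)$ induced by the canonical map, and Yoneda in pro-(prime-to-$p$) groups then identifies $\pi_1(X^{\vee}_{\notp},x)$ with the maximal prime-to-$p$ completion. Your version is shorter and makes transparent why the $BG$ generators in the definition of $\ssetnotpfc$ force the answer, and it avoids the cofiber/van Kampen argument and Quick's model category altogether; the price is the bookkeeping you flag (the $\pi_1\dashv B$ identification of $1$-truncated pointed connected pro-objects on both sides, done with pointed mapping spaces so you get homomorphisms rather than conjugacy classes, and squared with the paper's definition of $\pi_1$ via $[S^1]^{\vee}_{\notp}$), plus one point you assert rather than prove: that $\pi_1(X^{\vee}_{\notp},x)$ is pro-(prime-to-$p$), which needs the easy inductive check that every object of the finite-limit closure $\ssetnotpfc$ has fundamental group of order prime to $p$ (via the long exact sequence of a pullback). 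The paper's more explicit construction, on the other hand, also produces the cofiber description that underlies its $BA_5$ example. Finally, note that your "hands-on" fallback is not an independent argument: the claim that $\pi_1$ of the completion of a single $\pi$-finite space is the maximal prime-to-$p$ quotient is precisely what the paper's $N_pX_i$/cofiber construction is there to prove.
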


It also allows us to factor Isaksen's \'etale realization functor for motivic spaces in \cite{isaksen2004etale}.

\begin{theorem}[Theorem \ref{etalerealization}]
Let $S$ be a separated noetherian scheme over $\mathbb{F}_p$, and write $\EuScript{S}\mathrm{pc}_{S,*}^{\mathbb{A}^1}$ for the $\infty$-category of pointed motivic spaces over $k$, then the $\notp$-profinite \'etale homotopy type gives an \'etale realization functor
\[ \profettoptype_\notp : \EuScript{S}\mathrm{pc}_{S,*}^{\mathbb{A}^1} \to \profssetnotp \]
which lifts Isaksen's $\ell$-adic \'etale realization functor to $\notp$-profinite spaces.
\end{theorem}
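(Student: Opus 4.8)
The plan is to follow Isaksen's construction from \cite{isaksen2004etale} almost verbatim, replacing his $\ell$-adic completion by the $\notp$-profinite completion introduced in Section~1 and checking that the descent and homotopy-invariance inputs that construction relies on survive at this coarser level of completion. First I would set up the functor on representables, sending $X\in\mathrm{Sm}_S$ to $\profettoptype_\notp(X):=\ettoptype(X)^{\wedge}_{\notp}=\shape(\hootopos{X_{\ettoptype}})^{\wedge}_{\notp}$, the $\notp$-profinite completion of the shape of the hypercomplete \'etale $\infty$-topos. Since $\profssetnotp$ admits all small colimits (as does $\prosset$ by the Remark above, and $\notp$-completion is a left Bousfield localization), I left Kan extend along the Yoneda embedding $\mathrm{Sm}_S\hookrightarrow\mathrm{PSh}_\infty(\mathrm{Sm}_S)$ to a colimit-preserving functor $\mathrm{E}\colon\mathrm{PSh}_\infty(\mathrm{Sm}_S)\to\profssetnotp$. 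It then remains to check that $\mathrm{E}$ inverts (i) Nisnevich-local equivalences and (ii) the projections $X\times_S\mathbb{A}^1_S\to X$; since $\mathrm{E}$ preserves colimits, it then descends through $\EuScript{S}\mathrm{pc}^{\mathrm{Nis}}_S$ and $\EuScript{S}\mathrm{pc}^{\mathbb{A}^1}_S$, and precomposing with the functor forgetting basepoints, $\EuScript{S}\mathrm{pc}_{S,*}^{\mathbb{A}^1}\to\EuScript{S}\mathrm{pc}^{\mathbb{A}^1}_S$, yields the asserted $\profettoptype_\notp\colon\EuScript{S}\mathrm{pc}_{S,*}^{\mathbb{A}^1}\to\profssetnotp$.

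For (i): by the shape--cohomology comparison of Section~1, a map of $\notp$-profinite spaces is an equivalence once it induces isomorphisms on \'etale cohomology with every $\notp$-finite coefficient system; and for $X\in\mathrm{Sm}_S$ the presheaf $Y\mapsto R\Gamma_{\ettoptype}(Y,\underline{K})$ with $K$ a $\notp$-finite space is an \'etale sheaf of spaces, hence a Nisnevich sheaf, hence (finite cohomological dimension of the Nisnevich topology) a Nisnevich hypersheaf. So for a Nisnevich hypercover $U_\bullet\to X$ the object $\profettoptype_\notp(\lvert U_\bullet\rvert)=\colim_n\profettoptype_\notp(U_n)$ has the same $\notp$-finite cohomology as $\profettoptype_\notp(X)$, whence $\mathrm{E}$ inverts $\lvert U_\bullet\rvert\to X$; being colimit-preserving it therefore inverts all Nisnevich-local equivalences and factors through $\EuScript{S}\mathrm{pc}^{\mathrm{Nis}}_S$.

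Step (ii) is the crux, and is the reason the $\notp$-localization was introduced. I must show $\profettoptype_\notp(X\times_S\mathbb{A}^1_S)\to\profettoptype_\notp(X)$ is an equivalence of $\notp$-profinite spaces for all $X\in\mathrm{Sm}_S$; again by the Section~1 comparison it suffices to check this on \'etale cohomology with every $\notp$-finite local system, and since the pullback of such a system to $X\times\mathbb{A}^1$ is $\mathbb{A}^1$-constant, this is exactly the homotopy invariance of \'etale cohomology with coefficients prime to $p$, namely $\mathcal{L}\xrightarrow{\ \sim\ }Rp_{*}p^{*}\mathcal{L}$ for $p\colon X\times\mathbb{A}^1\to X$ (SGA~4), together with the connectedness of $\mathbb{A}^1$ and the triviality of the prime-to-$p$ \'etale homotopy type of the affine line over a separably closed field. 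It is essential that the pro-$p$ part has been discarded: over $\mathbb{F}_p$ the affine line carries nontrivial $\mathbb{Z}/p$-torsors (Artin--Schreier), so neither the uncompleted nor the $\widehat{\mathbb{Z}}$-completed \'etale homotopy type is $\mathbb{A}^1$-invariant, whereas the $\notp$-profinite one is. I expect the careful translation here -- verifying that the shape--cohomology dictionary really does reduce the $\notp$-profinite equivalence to the (possibly twisted, $\pi_1$-level) homotopy-invariance statement for \'etale cohomology -- to be the main technical obstacle; the remainder of the argument is formal.

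Finally, compatibility with Isaksen. For a prime $\ell\neq p$, his $\ell$-adic realization is the same left Kan extension but with $\ettoptype(X)$ completed $\ell$-adically in place of $\notp$-profinitely; since $\ell\neq p$ the $\ell$-completion of a pro-space factors naturally through its $\notp$-completion, $Z^{\wedge}_{\ell}\simeq(Z^{\wedge}_{\notp})^{\wedge}_{\ell}$, and $\ell$-completion preserves colimits. As both realizations are left Kan extensions, hence determined by their restrictions to $\mathrm{Sm}_S$, applying this identity to $\ettoptype(X)$ exhibits Isaksen's functor as the composite of $\profettoptype_\notp$ with the $\ell$-completion functor $\profssetnotp\to\sset^{\wedge}_{\ell}$, which is the asserted lift.
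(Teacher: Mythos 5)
Your overall architecture (realize on representables via the $\notp$-completed shape, left Kan extend, check Nisnevich descent and $\mathbb{A}^1$-invariance, then lift Isaksen through the chain of localizations $\profssetnotp\to\profsset_\ell$) is the same as the paper's, which outsources the formal framework to Robalo's identification of motivic spaces and to Isaksen's arguments. The compatibility-with-Isaksen paragraph and the descent step are essentially fine (the paper gets descent even more cheaply: the \'etale homotopy type satisfies \'etale hypercover descent, hence \emph{a fortiori} Nisnevich descent, and the Nisnevich $\infty$-topos over a finite-dimensional noetherian base is hypercomplete).

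The genuine gap is exactly the point you defer as ``the main technical obstacle'': the $\mathbb{A}^1$-invariance check is \emph{not} reducible to SGA~4 homotopy invariance of abelian \'etale cohomology with prime-to-$p$ coefficients plus the prime-to-$p$ simple connectivity of $\mathbb{A}^1_{\overline{k}}$. By Proposition \ref{notpprofiniteshapeequivalence}, a $\notp$-profinite shape equivalence also requires an isomorphism on nonabelian $\check{H}^1$ with coefficients in arbitrary finite groups of order prime to $p$, i.e.\ an isomorphism of maximal prime-to-$p$ quotients $\pi_1^{p'}(X\times\mathbb{A}^1,(x,0))\cong\pi_1^{p'}(X,x)$; this torsor-level statement is not part of SGA~4's cohomological homotopy invariance, and it does not follow formally from $\pi_1^{p'}(\mathbb{A}^1_{\overline{k}},0)=1$, because prime-to-$p$ completion is only right exact and behaves badly with respect to extensions and products (this is precisely why the $\notp$-localization does not make the problem go away by itself). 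The paper's proof supplies the missing argument: it invokes Proposition XIII.4.1 of \cite{raynaud1971revetements} to get the exact sequence
\[ \pi_1^{p'}(\mathbb{A}^1_{\overline{k}},0)\to\pi_1'(X\times\mathbb{A}^1,(x,0))\to\pi_1(X,x)\to 1, \]
concludes $\pi_1'(X\times\mathbb{A}^1)\cong\pi_1(X)$, and then runs a short group-theoretic argument with the kernel $N$ defining $\pi_1'$ to show that passing to prime-to-$p$ quotients kills the ambiguity, yielding $\pi_1^{p'}(X\times\mathbb{A}^1)\cong\pi_1^{p'}(X)$. Without this (or an equivalent torsor comparison), your step (ii) establishes only the abelian-coefficient part of the criterion, and the proof is incomplete precisely at the step the theorem is about. (The abelian part itself is indeed handled by Isaksen's $\ell$-adic arguments, as the paper notes, so once the $\pi_1^{p'}$ comparison is added your outline matches the paper's proof.)
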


Since  geometers typically do not use $\infty$-categories, so we set up a dictionary which allows us to deduce $\infty$-categorical equivalences using $1$-categorical torsors and abelian sheaf cohomology. The $\infty$-categorical point of view makes it completely formal that the \'etale homotopy type functor is a `sheaf of pro-spaces' on the \'etale site, which we extend to prove a hypercover descent statement generalizing one of Isaksen.

Section 2 is concerned with studying the \'etale homotopy type of a general non-archimedean $K$-analytic space $\mathcal{X}$ for $K$ a complete non-trivially valued non-archimedean field. There are actually two rival classes of morphisms, \emph{quasi-\'etale} and \emph{\'etale} which give different topoi but are known to give equivalent abelian sheaf cohomology. For a general $\mathcal{X}$ we simply do not know if the protruncated \'etale homotopy type is equivalent to the protruncated quasi-\'etale homotopy type. In the case where $\mathcal{X}$ is hausdorff and strictly $K$-analytic, we prove the following result that is of independent interest and implies the two rival homotopy types are equivalent in the case we need.

\begin{theorem}[Theorem \ref{comparison-berkovich-adic}]
Let $\mathcal{X}$ be a hausdorff strictly $K$-analytic space over a complete non-archimedean field $K$, and write $\mathcal{X}^{ad}$ for the corresponding adic space.

Then the quasi-\'etale 1-topos of $\mathcal{X}$ is equivalent to the \'etale 1-topos of $\mathcal{X}^{ad}$.
\end{theorem}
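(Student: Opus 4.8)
The plan is to construct an explicit comparison functor between the defining sites and to conclude via the comparison lemma for sites. Recall Berkovich's quasi-\'etale site of $\mathcal{X}$: its objects are quasi-\'etale morphisms $\mathcal{U}\to\mathcal{X}$ --- those which, locally on $\mathcal{U}$ with respect to a quasinet of affinoid domains, factor as an affinoid-subdomain embedding followed by an \'etale morphism --- and its coverings are quasinet coverings by such. Recall also Huber's \'etale site of $\mathcal{X}^{ad}$: objects are \'etale morphisms of adic spaces over $\mathcal{X}^{ad}$ and coverings are jointly surjective families. I would first define a functor $u$ from the former to the latter by $\mathcal{U}\mapsto\mathcal{U}^{ad}$, via Huber's analytification. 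This is well defined because: embeddings of affinoid subdomains of strictly $K$-affinoid spaces become open immersions of adic spaces --- by Temkin's form of the Gerritzen--Grauert theorem such a subdomain is a finite union of rational subdomains, each corresponding to a rational, hence open, subset; \'etale morphisms of Berkovich spaces induce \'etale morphisms of adic spaces, compatibly with analytification and fibre products; and since \'etaleness of a morphism of adic spaces is local on the source, a quasi-\'etale morphism is sent to an \'etale one. Functoriality is formal.

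Next I would check that $u$ is a morphism of sites inducing a geometric morphism, and that it is fully faithful. For continuity, the image $\{\mathcal{U}_i^{ad}\to\mathcal{U}^{ad}\}$ of a quasinet covering is jointly surjective: the canonical map $|\mathcal{U}^{ad}|\to|\mathcal{U}|$ sending a valuation to its rank-$\le 1$ vertical generization is surjective, and since $\{\mathcal{U}_i\}$ is a quasinet every quasi-compact open of $\mathcal{U}^{ad}$ is covered by the $\mathcal{U}_i^{ad}$. Full faithfulness is where I would invoke Huber's theorem that $\mathcal{X}\mapsto\mathcal{X}^{ad}$ is a fully faithful embedding of Hausdorff strictly $K$-analytic spaces into adic spaces; since morphisms of quasi-\'etale Berkovich spaces over $\mathcal{X}$ are detected on affinoid pieces, they correspond bijectively to morphisms of the associated adic spaces over $\mathcal{X}^{ad}$.

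The main obstacle is the remaining hypothesis of the comparison lemma: that every object of the \'etale site of $\mathcal{X}^{ad}$ has an \'etale cover by objects in the essential image of $u$. Given an \'etale morphism of adic spaces $V\to\mathcal{X}^{ad}$, one covers $V$ by adic spaces \'etale over quasi-compact opens $W\subseteq\mathcal{X}^{ad}$; by Temkin/Gerritzen--Grauert each $W$ is a finite union of rational subsets, hence is covered by spaces $\mathcal{V}^{ad}$ for $\mathcal{V}$ an affinoid subdomain of an affinoid piece of $\mathcal{X}$, which is quasi-\'etale over $\mathcal{X}$; and by Huber's local structure theory an \'etale morphism of adic spaces over an affinoid is, locally on the source, a composite of a rational localization and a finite \'etale morphism, hence descends to a quasi-\'etale morphism of Berkovich spaces, since finite \'etale algebras and rational localizations do not distinguish the Berkovich and adic settings. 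Assembling these inputs, the comparison lemma of SGA~4 gives the desired equivalence of topoi. The delicate points throughout are the interplay between Berkovich's quasinet coverings and genuine open coverings of the adic space --- controlled by the surjectivity and specialization behaviour of $|\mathcal{U}^{ad}|\to|\mathcal{U}|$ and by the maximality of rank-$1$ points --- and the bookkeeping in reducing an arbitrary \'etale adic space over $\mathcal{X}^{ad}$ to standard local models, for which one leans on Huber's structure theory together with Temkin's refinement of Gerritzen--Grauert.
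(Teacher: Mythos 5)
Your proposal is correct and takes essentially the same route as the paper: both pass to the subsite of quasi-\'etale maps with affinoid source, analytify to get a comparison functor into Huber's \'etale site of $\mathcal{X}^{ad}$, and prove density of its image using Huber's local structure theorem for \'etale morphisms of adic spaces (open/rational immersion followed by finite \'etale) together with the Berkovich--rigid--adic dictionary. The only difference is presentational: the paper phrases the conclusion as a density/cofinality argument citing Huber's Theorem 2.2.8 and Berkovich's Theorem 1.6.2 rather than invoking the SGA~4 comparison lemma by name.
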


As this suggests we also compare the theories obtained from the rigid and adic formalisms of non-archimedean geometry to Berkovich's. We further prove an Artin Comparison theorem, compare to de Jong's \'etale fundamental group, and end by stating a conjecture regarding an analogue of Friedlander's homotopy fiber theorem.

Section 3 is concerned with a complete discretely valued $K$ and using formal models. We rapidly retread the theory of formal kummer \'etale morphisms of special formal schemes as set up in \cite{berkovichcomplex}. We use many of the same geometric ideas of \cite{berkovichcomplex} to prove an abelian cohomological comparison, but with different techniques. We must use fundamentally new ideas to compare fundamental groups by passing through the usual scheme theoretic spectrum of the complete rings that locally model the formal scheme. Here we can use log geometry to descend a finite \'etale cover on $\mathcal{X}$ to a formal kummer \'etale one on the formal model. We deduce the following theorem.

\begin{theorem}[Theorem \ref{globalcomparisonberklog}]
Let $\mathfrak{X}$ be a algebraizably formally $R$-log smooth special formal scheme with fine log structure. Consider its generic fiber $\mathcal{X} = \mathfrak{X}_\eta$ as a non-archimedean $K$-analytic space.

The $\notp$-profinitely completed quasi-\'etale homotopy type of $\mathcal{X}$ agrees with the $\notp$-profinitely completed adically kummer \'etale homotopy type of $\mathfrak{X}$, where $p$ is the characteristic of the residue field $k$.

\[ \profettoptype_{\notp} \mathcal{X} \simeq \proffkettoptype_{\notp} \mathfrak{X} \]
\end{theorem}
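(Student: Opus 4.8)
The plan is to factor the comparison through several intermediate statements, following the structure already advertised in the introduction. First I would reduce the statement about homotopy types to two pieces: a comparison of abelian étale cohomology with coefficients in constant $\notp$-torsion sheaves, and a comparison of étale fundamental pro-groups (together with their actions on these cohomology groups). The dictionary of Section~1 — relating $\infty$-categorical shape equivalences to $1$-categorical torsor and abelian sheaf data — should reduce the claimed equivalence $\profettoptype_{\notp}\mathcal{X} \simeq \proffkettoptype_{\notp}\mathfrak{X}$ to exactly these two $1$-categorical comparisons, since a map of pro-spaces becomes an equivalence after $\notp$-profinite completion precisely when it induces isomorphisms on $\pi_1$ up to prime-to-$p$ completion (Theorem~\ref{notpprofinitepi1}) and isomorphisms on cohomology with all finite $\notp$-torsion local systems. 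So the real content splits into (i) the abelian cohomological comparison and (ii) the fundamental group comparison.

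For step (i), the map in play is the specialization/nearby-cycles comparison between the adically kummer étale site of $\mathfrak{X}$ and the quasi-étale site of its generic fiber $\mathcal{X} = \mathfrak{X}_\eta$. Here I would follow the geometric strategy of \cite{berkovichcomplex} but, as the introduction flags, with different techniques: rather than reproving a vanishing-cycles statement by hand, I would localize on $\mathfrak{X}$, use that it is algebraizably formally $R$-log smooth with fine log structure so that étale-locally it is the completion of a scheme-theoretic $R$-log smooth model along its special fiber, and invoke (or reprove) the log smooth base change / log nearby cycles theorem to identify $R\Gamma_{\mathrm{k\acute et}}(\mathfrak{X}, \Lambda)$ with $R\Gamma_{\mathrm{q\acute et}}(\mathcal{X}, \Lambda)$ for $\Lambda$ finite and $\notp$-torsion. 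Theorem~\ref{comparison-berkovich-adic} lets me pass freely between Berkovich's quasi-étale topos and the adic étale topos of $\mathcal{X}^{ad}$ whenever that is more convenient for invoking existing log-adic machinery. The quasi-étale rather than étale site is used throughout because it is the one that matches the formal/rigid picture, and its abelian cohomology agrees with the étale one by Berkovich's results.

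For step (ii), the fundamental group comparison, the idea — again following the introduction — is to descend a finite étale (equivalently, after completion away from $p$, a tame/kummer étale) cover of $\mathcal{X}$ to a formal kummer étale cover of $\mathfrak{X}$. Concretely: a finite $\notp$-cover of $\mathcal{X}$ spreads out, by the algebraizability hypothesis, to a Kummer étale cover of a scheme-theoretic log model; one then takes the formal completion along the special fiber and checks this is adically Kummer étale over $\mathfrak{X}$ and has the prescribed generic fiber. The full faithfulness and essential surjectivity of the generic-fiber functor on these categories of covers gives an equivalence of Galois categories, hence an isomorphism of étale fundamental pro-groups after prime-to-$p$ completion; log geometry (the logarithmic abhyankar lemma / purity) is what makes the tame covers on the generic fiber extend over the log-smooth formal model. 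Combining (i) and (ii) through the Section~1 dictionary yields the theorem.

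The main obstacle is step (ii): controlling the descent of covers across the generic-fiber functor for \emph{special} formal schemes — which are not topologically of finite type, so the usual finite-type formal GAGA and algebraization tools do not apply directly. This is precisely why the hypothesis is ``algebraizably formally $R$-log smooth'': one needs a scheme-theoretic model of the complete local rings to run an algebraization argument, and one needs the fine log structure to have a clean theory of Kummer étale covers and a log purity statement available. Making the passage through $\spec$ of these complete rings interact correctly with both the log structure and the prime-to-$p$ completion — so that tameness is preserved in both directions and no wild covers sneak in — is the delicate heart of the argument; everything else is assembling cited results (Theorem~\ref{comparison-berkovich-adic}, Theorem~\ref{notpprofinitepi1}, the Section~1 dictionary, and the log smooth base change theorem) in the right order.
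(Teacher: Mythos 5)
Your overall reduction (prime-to-$p$ fundamental group data plus cohomology with finite $\notp$-torsion local systems, via the Section~1 dictionary) is in the spirit of the paper, but the paper organizes the globalization differently and, more importantly, your proposal leaves a hole exactly where the paper does its real work. On organization: the paper does not apply the dictionary globally. It first proves a local comparison (Theorem \ref{local-comparison-berkovich-log}) for affine, algebraized, globally charted pieces, and then globalizes at the level of shapes: cover the (separated) $\mathfrak{X}$ by such affines, form the \v{c}ech hypercover $\mathfrak{U}_\bullet$, and use that shape and $\notp$-completion are left adjoints to write both $\profqettoptype_{\notp}\mathcal{X}$ and $\proffkettoptype_{\notp}\mathfrak{X}$ as colimits over $\Delta^{op}$ of the shapes of the levels; the levelwise local equivalences then glue. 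Your alternative of gluing cohomology and torsor comparisons directly would probably also work, but you would still need the same local statement, and you would have to redo the locally-constant-coefficient bookkeeping by hand rather than getting it from one descent argument (which is also where the separatedness hypothesis in the body of the paper enters).

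The genuine gap is in the local step. The sentence ``a finite $\notp$-cover of $\mathcal{X}$ spreads out, by the algebraizability hypothesis, to a Kummer \'etale cover of a scheme-theoretic log model'' fails as written if the model means the finite-type algebraization $V/\spec R$: the generic fiber of the completion along a subscheme $Z \subset V_s$ is in general only a subdomain (a tube) of $V_K^{an}$, so a cover of $\mathcal{X}$ gives no spreading-out to $V$. The paper's mechanism is to pass through $X=\spec A$, the scheme-theoretic spectrum of the complete special $R$-algebra itself: it proves as a separate theorem that $X$ is log regular (using the algebraization to verify Kato's criterion at closed points), so that Fujiwara--Kato(--Nakayama) log purity identifies kummer \'etale covers and cohomology of $X$ with \'etale covers and cohomology of $X_K$; it compares $X_K$ with the quasi-\'etale/adic theory of the affinoid $\mathcal{X}$ via Kiehl, Huber's affinoid comparison (and Huber's Proposition 3.15 when $\mathfrak{X}$ is not topologically of finite type), together with Theorem \ref{comparison-berkovich-adic}; and it matches adically kummer \'etale covers of $\mathfrak{X}$ with kummer \'etale covers of $X$ via Elkik's theorem, coherent descent, and Lemma \ref{finitekummerlemma}. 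You flag exactly this passage through complete rings as ``the delicate heart'' but supply no mechanism for it; since that is the actual content of the theorem, the proposal as it stands is an outline rather than a proof. Likewise, your cohomological step invokes a log smooth base change / log nearby cycles theorem as a black box, whereas the paper runs the chain through $X$, reducing to constant coefficients by a finite \'etale base change and Hochschild--Serre.
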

\subsection{Acknowledgements}

I am grateful to Vladimir Berkovich, Elden Elmanto, Marc Hoyois, and Yoichi Mieda for patience with naive questions and for thoughtful answers to them. I thank the organizers and speakers of the Arizona Winter School of 2017 for hosting a wonderful conference which led to several useful conversations. I must also thank David Hansen who left a very helpful answer on MathOverflow, which greatly simplified a proof. Most of all I thank my advisor Henri Gillet for patience with foolish questions and thoughtful answers to them.

\section{Shapes assigned to topoi}

We will briefly review and define some of the relevant objects and ideas we will use later on.

\subsection{Review of literature}

The idea of the shape of an $\infty$-topos goes back to \cite{toen2002segal}, where it was defined in model categorical language. Later in \cite{lurie2009higher}, Lurie defines for an $\infty$-topos $\EuScript{T}$ with its unique geometric morphism $\pi : \EuScript{T} \to \sset$ the shape of the $\infty$-topos to be the \emph{functor}
\[ \pi_* \pi^* : \sset \to \sset \]
Lurie's definition is absolute in the sense that the shape is defined for an $\infty$-topos, but not explicitly for an arbitrary geometric morphism. In \cite{hoyois2015higher}, Hoyois gives a definition of what may be called the relative shape for a morphism of $\infty$-topoi. We explain this perspective in the next paragraph

The $\infty$-topos $\EuScript{T}$ admits a contractible space of geometric morphisms to the terminal $\infty$-topos of $\infty$-groupoids $\EuScript{S}$. Call any such geometric morphism $\Pi$, with the adjoint pair $\Pi^* \dashv \Pi_*$. The left-adjoint `constant sheaf' functor $\Pi^*$ preserves colimits by virtue of being a left adjoint, and in addition is left-exact. This left-exactness implies that it further admits a pro-left adjoint denoted $\Pi_! : \EuScript{T} \rightarrow \prosset$. It is difficult to give an explicit description of this functor, but we will characterize some of its values using $1$-categorical language. The key fact is that $\Pi_! \mathbbm{1}_\EuScript{T}$ co-represents the functor $\pi_* \pi^*$ in the sense that
\[ \map_{\prosset}(\Pi_! \mathbbm{1}_\EuScript{T}, c(-)) \simeq  \pi_* \pi^* \]
where $c$ is the `constant diagram' functor $c : \sset \to \prosset$. We write
\[ \Pi_\infty : \rtop  \to \prosset \]
%
\[ \iota' : X \mapsto \sset/X \]
We will repeatedly use that $\Pi_\infty$ and $\Pi_!$ preserve colimits, which is immediately implied by each being a left adjoint.

We will use the two to define shapes assigned to three types of objects.

\begin{definition}
Using the above notations and context:
\begin{enumerate}
\item Given an $\infty$-topos $\EuScript{T}$, define $\shape(\EuScript{T})$ the \emph{shape of} $\EuScript{T}$, to be the pro-space $\Pi_\infty (\EuScript{T})$.

\item Given a site $S$, define the \emph{shape assigned to the site} $S$ to be the shape assigned to the hypercomplete $\infty$-topos of sheaves of $\infty$-groupoids on $S$, that is ${Sh}^{\wedge}_\infty (S)$.

\item Given a 1-topos $T$, define the \emph{shape assigned to} $T$ to be the shape assigned to the hypercomplete $\infty$-topos of sheaves of $\infty$-groupoids on $T$. In model categorical language the hypercomplete $\infty$-topos can be presented using the category $sT$ of simplicial sheaves with the local injective structure. The $\infty$-categorical description is as the hypercompletion of the $\infty$-topos of limit preserving functors $\EuScript{F} \in \mathrm{Fun}^{R}(N(T^{op}),\sset )$.
\end{enumerate}

There is a potential gap in the case that we have a site $S$ presenting a topos $T$. To see that their shapes are equivalent, levelwise sheafification gives a Quillen equivalence between simplicial sheaves with local injective model structure and simplicial pre-sheaves with hyperlocal injective model structure. The equivalence of homotopy categories is stated as Corollary 1.14 of \cite{jardine1987simplical}, and a proof can be written following in its steps. Given the Quillen equivalence, Theorem 6.5.2.1 and Corollary A.3.1.12 of \cite{lurie2009higher} say that the simplicial nerve of either simplicial model category is equivalent to hypercomplete $\infty$-topos of sheaves on the site.
\end{definition}

We make the following definition, in the spirit of the original definition of the \'etale homotopy type via the Verdier functor of \cite{artin1969etale}.

\begin{definition}
If $S$ is a locally connected site with fibered products and connected components functor $\pi_0 : S \rightarrow \mathrm{Set}$ then we can define the \emph{naive shape of} $S$ as follows. This is the diagram of simplicial sets, defined as the Verdier functor
\[ \Ver_S : HR(S) \rightarrow sSet \]
which assigns to a hyper-cover $U_\bullet \in HR(S)$ the simplicial set of `level-wise connected components'
\[ U_\bullet \mapsto \Big[ [n] \mapsto \pi_0(U_n)\Big] \]
\end{definition}
To clarify its relation to the previous $\infty$-categorical definitions, we appeal to the following theorem of \cite{hoyois2015higher}:

\begin{theorem}\label{verdiercorepresents}
The diagram $\Ver_S$ naturally co-represents the shape assigned to $S$.
\end{theorem}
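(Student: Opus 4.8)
The plan is to unwind both sides to the same explicit model. By definition, the shape assigned to the site $S$ is $\Pi_\infty$ applied to the hypercomplete $\infty$-topos $\hootopos S$ of sheaves of $\infty$-groupoids on $S$; and by the discussion preceding the definition, $\Pi_\infty(\EuScript{T}) = \Pi_! \mathbbm{1}_{\EuScript{T}}$, which is the pro-object co-representing the composite $\pi_* \pi^* : \sset \to \sset$. So the content of the theorem is that the pro-simplicial-set $\Ver_S$, viewed via the co-Yoneda embedding $c : \sset \to \prosset$ as a functor $\map_{\prosset}(\Ver_S, c(-))$, is naturally equivalent to $\pi_* \pi^*$. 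First I would invoke the standard presentation of the hypercomplete $\infty$-topos of a locally connected site with fibered products by simplicial (pre)sheaves with the hyperlocal injective model structure, as recorded in the second item of the shape definition; here local connectedness guarantees that $\pi^*$ (constant presheaf, then sheafify) has a genuine left adjoint $\pi_0$ at the $1$-categorical level, which is exactly what makes the site ``locally connected'' in the sense demanded.

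Next I would compute $\pi_* \pi^* A$ for a Kan complex $A$ using the fact that, in the hyperlocal injective model structure, the derived global sections $\mathbb{R}\Gamma$ of a constant simplicial sheaf can be computed as a homotopy limit over the hypercover site: concretely, $\pi_* \pi^* A \simeq \holim_{U_\bullet \in HR(S)} \map(\pi_0(U_\bullet), A) = \holim_{U_\bullet} \map(\Ver_S(U_\bullet), A)$, where the colimit defining the pro-object runs over the cofiltered category of hypercovers. This is precisely the Verdier-style formula; it is the content of the cited result of Hoyois, and I would follow his argument. The key input is that hypercovers form a cofinal/``homotopy dense'' family so that sheaf cohomology (equivalently, $\mathbb{R}\Gamma$ of a fibrant replacement of a constant sheaf) is computed as this homotopy limit — this is the simplicial-sheaf analogue of Verdier's hypercovering theorem.

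Having this, the identification $\map_{\prosset}(\Ver_S, c(A)) = \colim_{U_\bullet} \map_{\sset}(\Ver_S(U_\bullet), A) \simeq \pi_* \pi^* A$ is then essentially the definition of mapping spaces out of a pro-object, and naturality in $A$ is automatic. Combining with the co-representability characterization $\map_{\prosset}(\Pi_! \mathbbm{1}_{\EuScript{T}}, c(-)) \simeq \pi_* \pi^*$ and the Yoneda lemma in $\prosset$ gives $\Pi_\infty(\hootopos S) \simeq \Ver_S$ as pro-spaces, which is the assertion.

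I expect the main obstacle to be the hypercovering comparison step: justifying rigorously that the derived global sections of a constant simplicial sheaf on a locally connected site is computed by the homotopy limit of $\map(\pi_0(U_\bullet), A)$ over all hypercovers, i.e.\ that hypercovers are a cofinal system for this computation in the $\infty$-categorical/model-categorical setting. One must be careful that the indexing object ``$HR(S)$'' is treated up to simplicial homotopy (so that it becomes cofiltered) and that the injective-fibrant replacement used to compute $\mathbb{R}\Gamma$ is correctly matched to the Verdier construction; local connectedness is what prevents the appearance of extra $\pi_0$-data and makes $\Ver_S$ land in honest simplicial sets rather than simplicial pro-sets. Since the excerpt grants us Hoyois' theorem as the very statement to be proven, the realistic write-up is to cite \cite{hoyois2015higher} for this comparison and spell out only the translation between his formulation and the $\Pi_\infty$ definition used here.
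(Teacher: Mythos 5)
Your proposal is correct and takes essentially the same route as the paper, which likewise treats the statement as a direct consequence of the generalized (unstable) Verdier hypercover theorem of Hoyois/Dugger--Hollander--Isaksen: one identifies $\pi_*\pi^*A$ with the filtered colimit over hypercovers of $\map(\Ver_S(U_\bullet),A)$, which is by definition $\map_{\prosset}(\Ver_S,c(A))$, and concludes by co-representability and Yoneda. The only slip is the display claiming $\pi_*\pi^*A \simeq \holim_{U_\bullet \in HR(S)}\map(\Ver_S(U_\bullet),A)$: the homotopy limit is over the cosimplicial direction $\Delta$ (already absorbed into mapping out of the simplicial set $\Ver_S(U_\bullet)$), while the outer operation over $HR(S)$ must be the filtered colimit, exactly as your final identification with the pro-mapping space correctly uses.
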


This follows directly from the `generalized Verdier hypercover theorem', variants of which go back to work of Brown for spectral presheaves. We need an unstable analogue, see for example Theorem 8.6 (b) of \cite{dhi2004}. Given this theorem, we see that Verdier's Theorem 7.4.1 of \cite{artin1973theorie} immediately suggests the following proposition.

\begin{proposition}
Let $S$ be a site with products, and assume that $S$ is locally connected. Write $LS(\shape S)$ for the class of local systems of sets on $\shape S$, and write $LC(S)$ for the class of locally constant sheaves of sets on $S$. Give both the structure of a category by adding morphisms which for both categories are only the morphisms which are locally constant

The shape $\shape S$ has the property that there is an assignment $\Phi : LC(S) \rightarrow LS(\shape S)$, and furthermore

\begin{enumerate}
\item the functor $\Phi$ is an equivalence;

\item the functor $\Phi$ preserves cohomology in the sense that for any locally constant abelian sheaves $\mathscr{L}$ and $\mathscr{L}'$ and a locally constant morphism $\phi : \mathscr{L} \to \mathscr{L}'$ we have the following commutative diagram

\begin{center}
\begin{tikzcd}
H^i_{sheaf}(S,\mathscr{L}) \arrow[r, "\sim"] \arrow[d, "\phi"] & H^i_{sing}(\shape S, \Phi \mathscr{L}) \arrow[d, "\Phi\phi"] \\
H^i_{sheaf}(S,\mathscr{L}') \arrow[r, "\sim"] & H^i_{sing}(\shape S, \Phi \mathscr{L}')
\end{tikzcd}
\end{center}

where the horizontal maps are isomorphisms.
\end{enumerate}
\end{proposition}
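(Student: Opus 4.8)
The plan is to present both categories as filtered colimits, indexed by hypercovers, of equivalent ``descent data'' categories, using the Verdier presentation of the shape, and then to match cohomology hypercover-by-hypercover.

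\textbf{Setup and the equivalence (1).} By Theorem~\ref{verdiercorepresents} the shape $\shape S$ is the pro-object $\{\Ver_S(U_\bullet)\}_{U_\bullet \in HR(S)}$, the indexing category $HR(S)$ being cofiltered (after passing to the homotopy category, where hypercovers of hypercovers are cofinal --- this is part of the Verdier hypercover package). A sheaf of sets on $S$ is locally constant precisely when its restriction to $U_0$ is constant on each connected component for some hypercover $U_\bullet$, and the ``locally constant morphisms'' restriction ensures that any morphism is already of this form over a common refinement; writing $\mathscr{C}(U_\bullet)$ for the category of sheaves on $S$ whose restriction to $U_0$ is constant on components, with locally constant morphisms, I would therefore identify $LC(S) \simeq \colim_{U_\bullet} \mathscr{C}(U_\bullet)$. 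Next I would unwind descent along $U_\bullet \to \mathbbm{1}$: by local connectedness, an isomorphism of sheaves over $U_n$ that is constant on components is a $\pi_0(U_n)$-indexed family of bijections, so a descent datum is exactly a local system of sets on the simplicial set $[n]\mapsto\pi_0(U_n) = \Ver_S(U_\bullet)$, i.e.\ an object of $\mathrm{Fun}(\tau_{\le 1}\Ver_S(U_\bullet),\mathrm{Set})$; this is essentially Verdier's Theorem~7.4.1 of \cite{artin1973theorie}. Taking the colimit over $U_\bullet$ gives $\colim_{U_\bullet}\mathrm{Fun}(\tau_{\le 1}\Ver_S(U_\bullet),\mathrm{Set}) \simeq LS(\shape S)$ by the very definition of local systems (with locally constant morphisms) on a pro-space; this defines $\Phi$ and proves it an equivalence.

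\textbf{Cohomology (2).} For part (2), given a locally constant abelian sheaf $\mathscr{L}$ that is constant on components of $U_0$ for a hypercover $U_\bullet^0$, I would compute $R\Gamma_{sheaf}(S,\mathscr{L})$ via the generalized Verdier hypercover theorem (cf.\ the discussion preceding Theorem~\ref{verdiercorepresents} and Theorem~8.6 of \cite{dhi2004}), applied to the Eilenberg--MacLane sheaves $K(\mathscr{L},i)$: $R\Gamma_{sheaf}(S,\mathscr{L}) \simeq \hocolim_{U_\bullet}\holim_{[n]\in\Delta}\mathscr{L}(U_n)$, the homotopy colimit running over $HR(S)$ and $\mathscr{L}(U_n)$ denoting the honest group of \emph{sections}. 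Restricting to the cofinal subcategory of hypercovers refining $U_\bullet^0$ and using local connectedness, the cosimplicial object $[n]\mapsto\mathscr{L}(U_n)$ is the cochain complex of the simplicial set $\Ver_S(U_\bullet)$ with coefficients in the local system $\Phi\mathscr{L}$ (the coboundary twisted by exactly the descent datum defining $\Phi\mathscr{L}$), whose totalization is $R\Gamma_{sing}(\Ver_S(U_\bullet),\Phi\mathscr{L})$; the homotopy colimit over these hypercovers is then $R\Gamma_{sing}(\shape S,\Phi\mathscr{L})$ by the definition of cohomology of a pro-space with coefficients in a local system defined on one of its terms. Passing to homotopy groups yields the horizontal isomorphisms, and their naturality --- hence commutativity of the square for any locally constant $\phi:\mathscr{L}\to\mathscr{L}'$ --- is immediate, since every map in the construction is induced either by pullback of sheaves or by a map of simplicial sets.

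\textbf{Main obstacle.} The real work is bookkeeping rather than any single estimate: one must (a) check that the ``locally constant morphisms'' conventions make both sides honestly the filtered colimit of the $\mathscr{C}(U_\bullet)$, resp.\ the $\mathrm{Fun}(\tau_{\le 1}\Ver_S(U_\bullet),\mathrm{Set})$, and that $\Phi$ respects this colimit structure (a morphism of local systems lives on some term $\Ver_S(U_\bullet)$, a locally constant morphism of sheaves is constant over some refinement --- these must be matched); (b) verify the cofinality statements making $HR(S)$ a legitimate pro-indexing category; and (c) track the local-system twisting in $[n]\mapsto\mathscr{L}(U_n)$ so that it genuinely is the twisted cochain complex of $\Ver_S(U_\bullet)$. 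Point (c) is the one most likely to hide an error, and it is where hypercompleteness of $\hootopos S$ is essential: only the hyper-version of the Verdier theorem lets one compute $R\Gamma$ using \emph{sections} at finite level and thus recognize the twisted cochain complex, whereas with only \v{C}ech descent the identification with the singular cohomology of the shape would fail.
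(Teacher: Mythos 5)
Your proposal is correct and follows essentially the same route as the paper's own proof: both construct $\Phi$ from descent data over a hypercover refining a splitting cover of the sheaf (interpreted as a local system on $\Ver_S(U_\bullet)$, in the spirit of Verdier's Theorem 7.4.1), check independence of the choice by passing to common refinements, and obtain the horizontal isomorphisms in (2) from the Verdier hypercover theorem together with the definition of singular cohomology of a pro-space as a colimit over levels. Your reorganization as filtered colimits of descent-data categories over $HR(S)$ is only a more systematic packaging of the same argument, not a different method.
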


This proposition is really just the special case of $n=0$ in Proposition 2.10 of \cite{hoyois2015higher}, albeit stated and proved in $1$-categorical language.

\begin{proof}
Let $\mathscr{F}$ be a locally constant sheaf of sets on $S$. Choose a covering $\{ V_i \rightarrow \mathbbm{1}_S \}$ on which $\mathscr{F}_i = \mathscr{F}|_{S/V_i}$ is constant with value set $F$, and such that each $V_i$ is connected. We refine this inductively to a hypercovering $U_\bullet$ such that $U_n = \coprod_{i \in I_n} W_i^n$ with each $W_i^n$ connected. We write $\vec{i} = \{i_1,\ldots,i_n\}$ as shorthand, and $U_{\vec{i}}$ for the products $U_{i_1} \times \ldots \times U_{i_n}$. Furthermore, fix isomorphisms $\sigma_{\vec{i}}^f : U_{\vec{i}} \overset{\sim}{\rightarrow} U_{\sigma \vec{i}}$ for any permutation $\sigma$ in the symmetric group. Note that the isomorphism $\sigma_{\vec{i}}^f$ does not depend on the particular $W_i^n$, only on the larger $U_{\vec{i}}$ that it is contained in. We can now define a local system on the simplicial set

\[ \Ver_S\Big( U_\bullet \Big) = \pi_0(U_\bullet)\]

\noindent as follows. The zero simplices of the above simplicial set are the connected components of the $U_i$. On which we can simply define $\Phi \mathscr{F}$ to take the zero simplex $U_i$ to the set $\mathscr{F}(U_i)$, and let the isomorphism on any $n$-simplex $W_i^n$ to be the isomorphism $\sigma_{\vec{i}}^f$ when $W_i^n$ is a connected component in $U_{\vec{i}}$.

We wish to call this $\Phi \mathscr{F}$. We must show that this operation only depends on the sheaf $\mathscr{F}$ and not on the covering splitting it, at least up to equivalence of local systems. However if $\mathscr{F}$ admits another splitting $\{V_i' \rightarrow \mathbbm{1}_S\}$ we can find a common refinement $\{V_i'' \rightarrow \mathbbm{1}_S\}$ of the two, and one checks that the pullback of the local systems from $\Ver_S\Big( U_\bullet \Big)$ and $\Ver_S\Big( U_\bullet' \Big)$ to $\Ver_S\Big( U_\bullet'' \Big)$ agree.

For essential surjectivity, a local system on a pro-space comes from a local system at some level. This means we can lift the data of the local system to descent data for a locally constant sheaf and glue it together

For injectivity, we see that the local system corresponding to $\mathscr{F}$ determines descent data equivalent to $\mathscr{F}$.

This implies the first claim. The second is made of two parts. The easy part is that the construction of $\Phi$ immediately implies the commutativity of the diagram. The slightly harder part is that the horizontal arrows are isomorphisms. This is just the Verdier hypercover theorem, since the singular cohomology of the system $\Phi \mathscr{L}$ on the pro-space is defined to be the colimit of the singular cohomology at each level.
\end{proof}

We have the following corollary.

\begin{corollary}\label{locallyconstantsheaves}
Let $f_* : T \rightarrow T'$ be a geometric morphism of locally connected $1$-topoi, if the inverse image functor $f^*$ induces an equivalence of categories between subcategories of locally constant sheaves, then $f$ induces an isomorphism on fundamental pro-groupoids of the associated $\infty$-topoi.
\end{corollary}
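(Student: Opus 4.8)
The plan is to combine the preceding Proposition with the functoriality of the shape. A geometric morphism $f_* : T \to T'$ of $1$-topoi lifts to a geometric morphism of the associated hypercomplete $\infty$-topoi $\hootopos T \to \hootopos T'$ (its inverse image is left exact, colimit preserving, and preserves $\infty$-connective maps, hence descends to hypercompletions), and applying $\Pi_\infty : \rtop \to \prosset$ produces a map of pro-spaces $\shape f : \shape T \to \shape T'$. First I would check that the comparison $\Phi$ of the Proposition is natural along $f$: the trivializing covering and the refining connected hypercover chosen for a locally constant sheaf pull back along $f^*$ (after passing to a common refinement, since $f^*$ need not send connected objects to connected objects), the level-wise $\pi_0$ diagrams are functorial, and one reads off that the square with horizontal arrows $f^* : LC(T') \to LC(T)$ and $(\shape f)^* : LS(\shape T') \to LS(\shape T)$ and vertical arrows the two copies of $\Phi$ commutes up to natural isomorphism. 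Since $f^*$ is assumed to be an equivalence on locally constant sheaves, the Proposition then forces $(\shape f)^*$ to be an equivalence of categories of set-valued local systems.

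It remains to promote this to an equivalence of fundamental pro-groupoids. Exactly as in the proof of the Proposition, a local system on a pro-space $\{Y_i\}$ is defined at a finite level, so $LS(\{Y_i\}) \simeq \colim_i \mathrm{Fun}(\Pi_1 Y_i, \mathrm{Set})$ depends only on the pro-groupoid $\{\Pi_1 Y_i\}$, which is by definition the fundamental pro-groupoid of the associated $\infty$-topos. The claim is therefore the Galois-style reconstruction statement: a morphism of pro-groupoids inducing an equivalence on the (filtered colimit of) categories of $\mathrm{Set}$-valued representations is itself an equivalence. Concretely, after replacing both pro-systems by cofinal subsystems one recovers, level by level, the objects as the isomorphism classes of connected ``orbit'' local systems and the automorphism pro-groups as the automorphism groups of the associated fibre functors; feeding $(\shape f)^*$ through this identification on both sides yields the desired equivalence $\Pi_1(\shape T) \xrightarrow{\sim} \Pi_1(\shape T')$.

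I expect the main obstacle to be this last reconstruction, and in particular making sure it detects the pro-structure and not merely the limit group (a map such as $\mathbb{Z} \to \hat{\mathbb{Z}}$ must not be allowed to look like an equivalence) and that the choices of connected objects and fibre functors can be made compatibly across the filtered system --- the same refinement bookkeeping that already appeared in the proof of the Proposition, now carried along the diagram. A way to organize this cleanly is to handle $\pi_0$ first (which $LS$ detects through coproduct decompositions) and then work over each connected component with compatible basepoints, identifying the fundamental pro-group with the automorphism pro-group of the fibre functor on locally constant sheaves, which is the pro-group refinement of the classical equivalence between a group $G$ and its category of $G$-sets equipped with the forgetful functor.
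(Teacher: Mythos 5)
Your proposal follows essentially the same route as the paper: use the preceding Proposition to identify locally constant sheaves with local systems on the shape, check compatibility with $f^*$, and conclude that an equivalence of those categories forces an equivalence of fundamental pro-groupoids. The paper's own proof is exactly this argument compressed into two sentences --- the Galois-style reconstruction of the pro-groupoid from its category of set-valued local systems, which you rightly single out as the main obstacle (including detecting the pro-structure and choosing compatible basepoints), is precisely the step the paper asserts without further elaboration, so your sketch is the same approach carried out in more detail.
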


\begin{proof}
The fundamental pro-groupoid of the shape classifies local systems on the shape, which is the same as classifying locally constant sheaves in the topos. Since the latter category is the same, the former must be as well.
\end{proof}

The obvious temptation is to take the naive shape as the definition of the shape and apply this in the context of non-archimedean geometry. This would further have the benefit of avoiding the $\infty$-categorical language. While fibered products exist in the category of hypercovers, one does not have weak equalizers in general as observed in SGA. This difficulty can be circumvented in different ways, the most classical being only considering hypercovers up to simplicial homotopy as originally done in \cite{artin1969etale}. With enough points, one can instead resolve this by using Lubkin covers, which is used in \cite{friedlander1982etale} to define an \'etale topological type. This gives an honest pro-simplicial set, instead of a pro-object of homotopy types or a pro-$\infty$-groupoid. However, the Lubkin covers technique has problems with functoriality with respect to morphisms of sites, and to obtain functoriality one must pass through the homotopy type. The $\infty$-categorical perspective is quite simple to work with structurally. As nothing comes for free in mathematics, the disadvantage is the relative abstraction and the model categorical work required to relate it back to $1$-categorical properties.

We will first set up some notation for the $\infty$-categories of completed spaces we will work with. Except for the formalism of $\notp$-finite spaces, it can be found in more detail in \cite{xiii2011rational}.

\begin{definition}
Let $\sset$ be the $\infty$-category of $\infty$-groupoids, and let $p$ be a prime number.

\begin{enumerate}
\item Denote by $\ssettrunc$ the full subcategory of $\sset$ spanned by truncated spaces, that is spaces $X$ where $\pi_i(X) = 0$ for all sufficiently large $i$. This will be called the $\infty$-category of \emph{truncated spaces}.

\item Denote by $\ssetfc$ the full subcategory of $\ssettrunc$ spanned by spaces $X$ with finitely many connected components and where for each $x \in X$ the group $\prod_{i \geq 1} \pi_i(X,x)$ is finite. This will be called the $\infty$-category of \emph{$\pi$-finite spaces}.

\item Denote by $\ssetpfc$ be the full subcategory of $\ssetfc$ spanned by spaces $X$ where for each $x \in X$ each group $\pi_i(X,x)$ is a $p$-group. This will be called the $\infty$-category of \emph{$p$-finite spaces}.

\item Denote by $\ssetnotpfc$ the full subcategory of $\ssetfc$ generated under finite limits by spaces $X$ which satisfies at least one of the following:

\begin{enumerate}
\item the space $X$ is an $\ell$-finite space for some prime $\ell$ distinct from $p$,

\item the space $X$ is $BG$ for some finite group $G$ whose order is not divisible by $p$.
\end{enumerate}
\noindent This will be called the $\infty$-category of \emph{$\notp$-finite spaces}.
\end{enumerate}
\end{definition}

Let us briefly state some generalities about pro-$\infty$-categories. These results mimic foundational statements about presentable or accessible $\infty$-categories, however pro-categories are essentially never accessible and so these results must be justified.

\begin{lemma}\label{pro-categorylemma}
Let $i: \EuScript{C}' \subset \EuScript{C}$ be a fully faithful inclusion of accessible $\infty$-categories, and further assume that the inclusion of $\EuScript{C}'$ into $\EuScript{C}$ preserves finite limits. Then

\begin{enumerate}
\item the inclusion extends to a fully faithful inclusion $i: \procat \EuScript{C}' \subset \procat \EuScript{C}$,

\item the inclusion in (1) admits a left adjoint $L$ given by precomposition with $i$,

\item the functor $L$ is a localization,

\item a morphism $X_\bullet \to Y_\bullet$ in $\procat \EuScript{C}$ is taken to an equivalence by $L$ if and only if $\colim_{i \in I} \map(Y_i,Z) \to \colim_{j \in J} \map(X_j,Z)$ is an equivalence of spaces for all objects $Z$ in $\EuScript{C}'$,

\item assume that $\EuScript{C}'$ is generated under finite limits by some class of objects $\EuScript{O}$, then $L$ turns a morphism $X_\bullet \to Y_\bullet$ into an equivalence if and only if $\colim_{i \in I} \map(Y_i,Z) \to \colim_{j \in J} \map(X_j,Z)$ is an equivalence of spaces for all objects $Z$ in $\EuScript{O}$.
\end{enumerate}
\end{lemma}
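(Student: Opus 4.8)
The plan is to build the pro-category extension via the functor-of-points description recalled in the opening remark, namely $\procat\EuScript{C} \simeq \mathrm{Fun}^{lex}(\EuScript{C},\sset)^{op}$, and then to recognize $L$ as the standard reflective localization onto a full subcategory cut out by a left-exact-functor condition. First I would establish (1): the inclusion $i : \EuScript{C}' \subset \EuScript{C}$ is fully faithful and left exact, so restriction along $i$ sends a left-exact accessible functor $\EuScript{C} \to \sset$ to a left-exact accessible functor $\EuScript{C}' \to \sset$, and conversely left Kan extension along $i$ sends left-exact accessible functors on $\EuScript{C}'$ to left-exact accessible functors on $\EuScript{C}$ (here one uses that $\EuScript{C}$ and $\EuScript{C}'$ are accessible with finite limits, so the relevant Kan extensions exist and preserve finite limits — this is where accessibility is doing work, since pro-categories themselves are not accessible). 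Fully faithfulness of the induced functor on pro-categories is then the statement that $i$ is fully faithful together with the fact that $\colim_{j} \map_{\EuScript{C}}(i X_j, i X'_{j'}) \simeq \colim_j \map_{\EuScript{C}'}(X_j, X'_{j'})$, which is immediate.

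Next, for (2) and (3): the left adjoint $L$ to $i$ is precisely precomposition with $i$ on the functor-category side (equivalently, on pro-objects, $L$ takes a pro-object of $\EuScript{C}$ indexed by $I$ and regards its diagram as living in... no — rather, $L$ corepresents the restricted functor), and the adjunction $L \dashv i$ is formal once one has the two Kan-extension functors above and checks the unit/counit. That $L$ is a localization, i.e. that the counit $L i \to \id$ is an equivalence, follows because $i$ is fully faithful; alternatively one cites that a right adjoint which is fully faithful exhibits its source as a reflective localization (Lurie, \emph{Higher Topos Theory}, 5.2.7). For (4), the mapping-space characterization: a map $X_\bullet \to Y_\bullet$ in $\procat\EuScript{C}$ becomes an equivalence under $L$ iff it induces an equivalence after applying $i$ — but after transporting to $\mathrm{Fun}^{lex}(-,\sset)^{op}$, mapping into a constant pro-object $c(Z)$ for $Z \in \EuScript{C}'$ computes exactly $\colim_i \map(X_i, Z)$, and the family of such functors $\{ (-) \mapsto \colim \map(-, Z) : Z \in \EuScript{C}'\}$ is jointly conservative on $\procat\EuScript{C}'$ (this is the co-Yoneda lemma: these corepresent the coordinates of the defining functor). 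So the localized map is an equivalence iff all these maps of spaces are equivalences.

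Finally (5) is a reduction from the class $\EuScript{C}'$ to a generating class $\EuScript{O}$: since $\EuScript{C}'$ is generated under finite limits by $\EuScript{O}$, and since for a fixed pro-object $W_\bullet \in \procat\EuScript{C}$ the assignment $Z \mapsto \colim_i \map(W_i, Z)$ sends finite limits in $\EuScript{C}'$ to finite limits in $\sset$ (a pro-object is by definition a cofiltered diagram, and filtered colimits in $\sset$ commute with finite limits), the collection of $Z$ for which $\colim_i \map(Y_i,Z) \to \colim_j \map(X_j,Z)$ is an equivalence is closed under finite limits; if it contains $\EuScript{O}$ it therefore contains all of $\EuScript{C}'$, and we conclude by (4). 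I expect the main obstacle to be the careful justification in (1)--(2) that the left Kan extension along $i$ preserves left-exactness and accessibility of functors — this is the one place the hypotheses (accessible, finite-limit-preserving inclusion) are genuinely needed and cannot be black-boxed from the presentable-category toolkit, precisely because $\procat\EuScript{C}$ is not presentable; everything after that is a formal consequence of the co-Yoneda lemma and the interchange of filtered colimits with finite limits.
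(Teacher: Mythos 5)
Your argument is correct, and for parts (3)--(5) it coincides with the paper's: localization is by definition a left adjoint to a fully faithful inclusion, part (4) is the (co-)Yoneda identification of $\map_{\procat\EuScript{C}}(X_\bullet, cZ)$ with $\colim_j \map(X_j,Z)$, and part (5) is exactly the paper's observation that the functor corepresenting a pro-object is left exact (filtered colimits in $\sset$ commute with finite limits), so the class of $Z$ for which the comparison map is an equivalence is closed under finite limits and hence contains all of $\EuScript{C}'$ once it contains $\EuScript{O}$. Where you genuinely diverge is in (1)--(2): the paper stays in the diagram model, deducing full faithfulness from the explicit formula $\map_{\procat\EuScript{C}}(X_\bullet,Y_\bullet)\simeq \lim_{j}\colim_{i}\map_{\EuScript{C}}(X_i,Y_j)$ of Barnea--Harpaz--Horel and verifying the adjunction by reducing, via limit preservation of mapping spaces, to the corepresentable case, whereas you transport everything to $\mathrm{Fun}^{lex}(-,\sset)^{op}$, realizing the inclusion as left Kan extension along $i$ and $L$ as restriction. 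That route works, but the step you flag as the main obstacle --- that $\mathrm{Lan}_i$ preserves left-exact accessible functors --- is easier than you suggest and needs no machinery beyond what you already invoke: every left-exact accessible functor on $\EuScript{C}'$ is a filtered colimit of corepresentables (this is precisely the pro-representability equivalence from the opening remark), $\mathrm{Lan}_i$ commutes with colimits and carries $\map_{\EuScript{C}'}(X,-)$ to $\map_{\EuScript{C}}(iX,-)$ (two applications of Yoneda), and a filtered colimit of left-exact functors to $\sset$ is again left exact; note also that your phrasing in (4), ``becomes an equivalence under $L$ iff it induces an equivalence after applying $i$,'' is loose, though the adjunction argument you then give is the right one. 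In short, the paper's approach buys a short proof at the cost of citing the mapping-space formula for pro-categories, while yours buys a self-contained functor-category picture in which (2) and (3) become essentially tautological.
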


\begin{proof}
Compare to Remark 3.1.7 of \cite{xiii2011rational}, although the adjunction is stated on the wrong side.
\begin{enumerate}
\item We note that as objects in the $\infty$-category $\sset$ we have the following equivalence by Proposition 5.1.1 and 5.1.2 of \cite{barnea2015pro}

\[ \map_{\procat \EuScript{C}'}(X_\bullet,Y_\bullet) \simeq \lim_{j \in J} \colim_{i \in I} \map_{\EuScript{C}'}(X_j,Y_i) \] 

and similarly

\[ \lim_{j \in J} \colim_{i \in I} \map_{\EuScript{C}}(X_j,Y_i) \simeq \map_{\procat \EuScript{C}}(X_\bullet,Y_\bullet)\]

and so $i$ being fully faithful says that the RHS of the first equivalence is equivalent to the LHS of the second equivalence at each $(i,j)$.

\item We want to check that the following spaces are equivalent

\[ \map_{\procat \EuScript{C}} (LX_\bullet , Y_\bullet) \simeq \map_{\procat \EuScript{C}} ( X_\bullet, iY_\bullet) \]

we use the fact that $i$ extends to a limit preserving functor and that mapping spaces preserve limits to reduce to the case that $Y_\bullet$ is co-representable, which is true as a corollary of the $\infty$-categorical Yoneda lemma.

\item A functor is defined to be a localization when it is a left adjoint with fully faithful right adjoint, which we proved in (1) and (2).

\item This is the Yoneda lemma, as the diagram $X_\bullet$ is realized as a left exact accessible functor by $\lim_{j \in J} \map(X_j, -)$.

\item Since the functors which co-represent pro-objects are left exact, they preserve finite limits. In particular for any finite diagram of objects in $\EuScript{O}$, the assumption that $f$ induces equivalences on mapping to objects of $\EuScript{O}$ implies that it induces an equivalence to the limit over the diagram. Repeating the process implies (4). \qedhere
\end{enumerate}
\end{proof}

The model theory of pro-categories can be difficult, as they may be neither fibrantly nor cofibrantly generated. In general the above $\infty$-categorical perspective makes it easy to define various localizations of pro-categories.

\begin{definition}
We now take the associated pro-categories of the above list resulting in the following.

\begin{enumerate}
\item The category of \emph{protruncated spaces}

\[ \prossettrunc = \procat \ssettrunc \simeq \mathrm{Fun}^{lex}(\ssettrunc, \sset)^{op}\]

\item The category of \emph{profinite spaces}

\[ \profsset = \procat \ssetfc \simeq \mathrm{Fun}^{lex}(\ssetfc,\sset)^{op}\]

\item The category of \emph{$\notp$-profinite spaces}

\[ \profssetnotp = \procat \ssetnotpfc \simeq \mathrm{Fun}^{lex}(\ssetnotpfc,\sset)^{op}\]

\item The category of \emph{$p$-profinite spaces}

\[  \profssetp = \procat \ssetpfc \simeq \mathrm{Fun}^{lex}(\ssetpfc,\sset)^{op} \]
\end{enumerate}
\end{definition}

Given a space $X \in \sset$, we complete it by considering its co-Yoneda functor $\map_\sset (X,-) : \sset \rightarrow \sset$ and restricting the domain to truncated spaces $\sset^{<\infty}$, $\pi$-finite spaces $\ssetfc$, $\notp$-finite spaces $\ssetnotpfc$, or $p$-finite spaces $\ssetpfc$. The resulting protruncated space will be denoted $X^\natural$, the resulting profinite space by $X^{\vee}$, the resulting $\notp$-profinite space by $X^{\vee}_{\notp}$, and the resulting $p$-profinite space by $X^{\vee}_p$, respectively. All four completion functors admit a `materialization' right adjoint, formally defined as
\[ \operatorname{Mat}(X) = \map_{\EuScript{C}}(*,X) \]
where $\EuScript{C}$ is protruncated spaces $\prossettrunc$, profinite spaces $\profsset$, $\notp$-profinite spaces $\profssetnotp$, or $p$-profinite spaces $\profssetp$. These can be informally described by taking the limit in $\sset$ over the diagram.

Lemma \ref{pro-categorylemma} implies we have a chain of fully faithful right adjoints
\[ \profsset_\ell \subset \profssetnotp \subset \profsset \subset \prossettrunc \subset \prosset \]
with left adjoint localizations for any pair of distinct prime numbers $p$ and $\ell$,
\[ \prosset  \to \prossettrunc \to \profssetnotp \to \profsset_\ell .\]
Some remarks are in order about these categories.

The $\infty$-category of pro-spaces is modeled by Isaksen's strict model structure, see Theorem 5.2.1 of \cite{barnea2015pro} for a generalization to a class of pro-categories. This is the most natural target for the shape functor, however it is in general difficult to actually compute derived global sections of non-truncated sheaves of $\infty$-groupoids.

The $\infty$-category of profinite spaces can be modeled via the model category described in \cite{quick2008profinite} and \cite{quick2011continuous}. In this, the underlying category is the category of simplicial objects in the category of profinite sets. That Quick's model category models the $\infty$-categorical description of profinite spaces follows from Theorem 7.4.7 and Corollary 7.2.12 of \cite{barnea2015pro}.

Similarly, in \cite{morel1996ensembles} Morel defined a model structure on simplicial profinite sets to model $p$-adic completion. This also models the $\infty$-category of $p$-profinite spaces as described above. This follows from Theorem 7.4.10 and Corollary 7.3.8 of \cite{barnea2015pro}.

As for protruncated spaces, it is in general difficult to detect whether or not a map of pro-spaces is an equivalence. Recall that in classical topology one of the main theorems of \cite{may-whitehead} states extends the Whitehead Theorem,

\begin{theorem}
Let $f: X \rightarrow Y$ be a map of fibrant simplicial sets. Then $f$ is a homotopy equivalence if and only if it induces an isomorphism on fundamental groupoids and for any local system of abelian groups $A$ on $Y$, the map $f$ induces isomorphisms on cohomology groups for every $n \geq 0$

\[ H^i(Y,A) \cong H^i(X,f^* A) \]
\end{theorem}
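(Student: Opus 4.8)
The plan is to dispatch the forward implication trivially and to prove the converse by descending to universal covers, where a classical Whitehead--Hurewicz argument takes over. The forward direction is immediate: a homotopy equivalence of fibrant simplicial sets is in particular a weak equivalence, and both the fundamental groupoid and cohomology with local coefficients are homotopy functors, so they are carried to isomorphisms.

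For the converse, I would first reduce to the connected case. An isomorphism of fundamental groupoids is in particular a bijection on $\pi_0$, so $f$ splits as a disjoint union of maps of connected components; cohomology with local coefficients and the fundamental groupoid both decompose as products over components, and a coproduct of homotopy equivalences of Kan complexes is one, so it suffices to treat one component. Thus assume $X$ and $Y$ are connected, choose a basepoint, and write $\pi = \pi_1(Y) \xrightarrow{\sim} \pi_1(X)$ for the induced isomorphism. Let $\widetilde{Y}\to Y$ be the universal cover and $\widetilde{X} = X\times_Y \widetilde{Y}$; because $\pi_1 X \to \pi_1 Y$ is an isomorphism, $\widetilde{X}$ is connected and simply connected, hence \emph{is} the universal cover of $X$, and $f$ lifts to a $\pi$-equivariant map $\widetilde{X}\to\widetilde{Y}$. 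Form $D_\bullet = \mathrm{cone}\bigl(C_\bullet(\widetilde{X};\mathbb{Z})\to C_\bullet(\widetilde{Y};\mathbb{Z})\bigr)$, a bounded-below complex of \emph{free} $\mathbb{Z}[\pi]$-modules (free since $\pi$ acts freely on the simplices of a covering space), whose homology is the relative homology $H_\bullet(\widetilde{Y},\widetilde{X};\mathbb{Z})$ as a $\mathbb{Z}[\pi]$-module.

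The key input is that cohomology of $Y$ (resp.\ of $X$) with coefficients in a local system of abelian groups, i.e.\ a $\mathbb{Z}[\pi]$-module $A$, is computed by $\mathrm{Hom}_{\mathbb{Z}[\pi]}(C_\bullet(\widetilde{Y};\mathbb{Z}),A)$ (resp.\ by $\mathrm{Hom}_{\mathbb{Z}[\pi]}(C_\bullet(\widetilde{X};\mathbb{Z}),A)$, using the isomorphism $\pi_1 X\cong\pi$ to match coefficient categories). So the hypothesis that $f^*\colon H^\ast(Y;A)\to H^\ast(X;f^*A)$ is an isomorphism for every such $A$ translates, via the long exact sequence, into: $\mathrm{Hom}_{\mathbb{Z}[\pi]}(D_\bullet,A)$ is acyclic for every $\mathbb{Z}[\pi]$-module $A$. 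I would then isolate the homological lemma: a bounded-below complex $D_\bullet$ of projective $\mathbb{Z}[\pi]$-modules with $\mathrm{Hom}_{\mathbb{Z}[\pi]}(D_\bullet,A)$ acyclic for all $A$ is itself acyclic. Indeed, if not, let $m$ be minimal with $N := H_m(D_\bullet)\ne 0$; since the homology of $D_\bullet$ vanishes below $m$, one peels split surjections off the bottom to replace $D_\bullet$, up to chain homotopy equivalence, by a complex of projectives concentrated in degrees $\ge m$, for which $H^m(\mathrm{Hom}_{\mathbb{Z}[\pi]}(D_\bullet,A))\cong \mathrm{Hom}_{\mathbb{Z}[\pi]}(N,A)$; taking $A=N$ and the identity gives a contradiction. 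Hence $H_\bullet(\widetilde{X};\mathbb{Z})\to H_\bullet(\widetilde{Y};\mathbb{Z})$ is an isomorphism.

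Now $\widetilde{X}\to\widetilde{Y}$ is a map of simply connected spaces that is an isomorphism on integral homology, so by the simply connected Whitehead theorem (relative Hurewicz applied to the mapping pair) it is a weak homotopy equivalence; covering space theory then gives $\pi_n X\cong\pi_n\widetilde{X}\xrightarrow{\sim}\pi_n\widetilde{Y}\cong\pi_n Y$ for $n\ge 2$, while $\pi_1$ and $\pi_0$ are isomorphisms by assumption, so $f$ is a weak equivalence and hence, $X$ and $Y$ being fibrant, a homotopy equivalence. I do not expect a deep obstacle here; the point that needs genuine care is the reduction to universal covers and, in particular, recognizing that the full strength of ``isomorphism on cohomology with \emph{all} local systems'' is exactly what encodes an isomorphism on the homology of the universal covers (the statement is false with constant coefficients alone). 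Once that is in place, the remainder is the standard assembly of the equivariant-cochain description of local-coefficient cohomology, the single homological lemma above, and the classical simply connected Whitehead theorem.
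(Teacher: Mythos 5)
Your argument is correct, and it is worth saying up front that the paper does not prove this statement at all: it is quoted as a classical result and attributed to the cited source (May's dual Whitehead theorems), so there is no in-paper proof to match against. Your route --- reduce to connected components, pass to universal covers, observe that $C_\bullet(\widetilde{Y};\mathbb{Z})$ and $C_\bullet(\widetilde{X};\mathbb{Z})$ are complexes of free $\mathbb{Z}[\pi]$-modules, identify local-coefficient cohomology with $\mathrm{Hom}_{\mathbb{Z}[\pi]}(C_\bullet(\widetilde{Y};\mathbb{Z}),A)$, apply the lemma that a bounded-below complex of projectives whose $\mathrm{Hom}(-,A)$ is acyclic for all $A$ is itself acyclic, and finish with the simply connected homology Whitehead theorem and covering-space theory --- is the standard classical proof, and each step checks out: the peeling-off argument for the truncation is sound, chain homotopy equivalence transfers the $\mathrm{Hom}$-acyclicity hypothesis to the truncated complex, and taking $A=N$ with the identity map does give the contradiction. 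The cited source organizes the proof differently (via Postnikov towers and the ``dual'' Whitehead formalism rather than universal covers), but that is a presentational difference, not a gap. Two small points deserve a sentence if you write this up: the universal cover exists simplicially for a connected Kan complex and $\pi$ acts freely on the simplices in every degree (including degenerate ones), which is what makes each $C_n(\widetilde{Y};\mathbb{Z})$ genuinely free over $\mathbb{Z}[\pi]$; and the identification of $H^\ast(X;f^\ast A)$ with $\mathrm{Hom}_{\mathbb{Z}[\pi]}(C_\bullet(\widetilde{X};\mathbb{Z}),A)$ uses that $\widetilde{X}=X\times_Y\widetilde{Y}$ is the universal cover of $X$, i.e.\ exactly the isomorphism of fundamental groups you assumed, so the hypothesis on fundamental groupoids is used twice and cannot be weakened to surjectivity.
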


This is false in pro-spaces, where the map from a space $X$ to its Postnikov tower $P_\bullet X$ is a Whitehead equivalence but will usually not admit a map back from the tower. We can force the above theorem to be true by localization, which we will now prove to be equivalent to protruncation. This is mentioned as a remark in a publicly available draft of Marc Hoyois, but the author does not know of a published proof. For completeness' sake we include the easy proof.

\begin{theorem}\label{nonstrictmodels}
The non-strict model structure on pro-simplicial sets of \cite{isaksen2003strict} models the $\infty$-category $\hat{\sset}^{<\infty}$.
\end{theorem}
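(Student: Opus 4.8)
The plan is to exhibit Isaksen's non-strict model structure on \procat sSet as a presentation of the Bousfield localization of \prosset{} that forces the generalized Whitehead theorem above to hold, and then to identify that localization with \prossettrunc{} using Lemma \ref{pro-categorylemma}. Recall that the non-strict structure of \cite{isaksen2003strict} has the same cofibrations as the strict structure but a larger class $W_{ns}$ of weak equivalences, namely (a suitable pro-version of) the maps appearing in the generalized Whitehead theorem: those $f \colon X_\bullet \to Y_\bullet$ inducing a pro-isomorphism on fundamental pro-groupoids and an isomorphism on cohomology with every compatible system of local coefficients. Since the strict structure models \prosset{} (Theorem 5.2.1 of \cite{barnea2015pro}), and passing from a model structure to one with the same cofibrations but more weak equivalences exhibits the new underlying $\infty$-category as the localization of the old one at the enlarged weak equivalences, it suffices to prove $\prosset[W_{ns}^{-1}] \simeq \prossettrunc$. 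For this last compatibility we invoke the general comparison between left Bousfield localization of a model category and \v{C}ech/Dwyer--Kan localization of the associated $\infty$-category.

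By Lemma \ref{pro-categorylemma}, $\prossettrunc = \procat \ssettrunc$ is precisely the localization of \prosset{} along the left adjoint $L$ of the fully faithful inclusion, and by part (4) of that lemma a morphism $f$ is an $L$-equivalence if and only if $\colim_{i \in I} \map(Y_i,Z) \to \colim_{j \in J} \map(X_j,Z)$ is an equivalence of spaces for every truncated $Z$. So the theorem reduces to the identity $W_{ns} = \{\text{$L$-equivalences}\}$. One inclusion is immediate: Eilenberg--MacLane spaces $K(\pi,n)$ and $1$-truncated spaces (classifying groupoids of a group action) are truncated, so an $L$-equivalence induces isomorphisms on all local-system cohomology groups and on fundamental pro-groupoids, hence lies in $W_{ns}$.

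For the converse, let $f \in W_{ns}$ and let $Z$ be $n$-truncated; one shows $\colim_{j} \map(X_j,Z) \simeq \colim_{j} \map(Y_j,Z)$ by induction on $n$, using that filtered colimits of spaces are left exact. The case $n \leq 1$ is exactly the hypothesis on fundamental pro-groupoids, since mapping a pro-space into a $1$-truncated space and taking the filtered colimit computes maps out of the fundamental pro-groupoid. For the inductive step, the Postnikov fibration $\tau_{\leq n}Z \to \tau_{\leq n-1}Z$ is, componentwise, a principal fibration with fiber $K(\pi_n Z, n)$ classified by the twisted $k$-invariant; mapping the $X_j$ (resp.\ $Y_j$) in and passing to the filtered colimit turns the associated fiber sequences into fiber sequences whose base terms are controlled by the inductive hypothesis and whose remaining terms are controlled by the pro-cohomology $\colim_j H^*(X_j; f^*A)$ with the pulled-back local system $A$. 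Both agree for $X_\bullet$ and $Y_\bullet$ by hypothesis, so the $n$-truncated case follows, and letting $n \to \infty$ gives all truncated $Z$.

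The main obstacle is the bookkeeping in this last induction. One must verify that a local system on a truncated $Z$ pulls back along a pro-map to the kind of compatible system of coefficients that $W_{ns}$-equivalences are required to detect, and that the decomposition of $\colim_j \map(X_j,Z)$ over the fundamental-groupoid data of $Z$ is compatible with the Postnikov filtration; the exactness of filtered colimits of spaces together with the elementary obstruction theory of \cite{may-whitehead} make each individual stage routine, but assembling the stages cleanly over the pro-system is where the (still modest) work lies. A secondary point, as noted above, is pinning down the precise sense in which Isaksen's non-strict structure is a localization of the strict one and that this matches localization at the level of underlying $\infty$-categories.
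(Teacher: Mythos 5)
Your proposal is correct and its skeleton is the same as the paper's: realize the non-strict structure as a (Bousfield-type) localization of the strict structure, which models $\prosset$ by Theorem 5.2.1 of \cite{barnea2015pro}, and then identify the corresponding $\infty$-categorical localization with $\prossettrunc$ via Lemma \ref{pro-categorylemma} and the universal property of localizations. The difference is in how the two classes of inverted maps are matched: the paper simply cites that Isaksen's non-strict structure is the left Bousfield localization of the strict one at the truncated equivalences and invokes Proposition 5.2.4.6 of \cite{lurie2009higher} to pass from the model-categorical localization to the $\infty$-categorical one (this also disposes of your ``secondary point''), whereas you re-derive the identification $W_{ns}=\{L\text{-equivalences}\}$ by hand, via the Postnikov-tower/obstruction-theoretic induction with twisted $k$-invariants and local coefficients. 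That induction is sound in outline (it is essentially the pro-space generalized Whitehead theorem), but note that it amounts to re-proving Isaksen's own characterization of the non-strict weak equivalences as the maps inducing strict equivalences on Postnikov truncations; citing that characterization collapses your longest step to a sentence, at the cost of self-containedness, while your version spells out exactly why cohomology with local systems plus the fundamental pro-groupoid detects maps into truncated targets.
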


\begin{proof}
Isaksen's non-strict model structure is a left Bousfield localization of the strict model structure at truncated equivalences. Similarly, the restriction functor from $\prosset$ to to $\prosset^{<\infty}$ is a left exact $\infty$-categorical localization, and it is the localization exactly at truncated equivalences. Proposition 5.2.4.6 of \cite{lurie2009higher} tells us the left Bousfield localization maps to an $\infty$-categorical localization. Since localizations are universal with respect to the class they invert, the two must be equivalent.
\end{proof}

As far as $\notp$-profinite spaces, the original definition was modified after the author saw \cite{carchedi-elmanto}. In particular, the author expects that $\notp$-profinite spaces should be a further localization of their category. We justify this definition by the following series of results, after we give a mapping space definition of the fundamental group of a $\notp$-profinite space in the expected way.

\begin{definition}
The \emph{fundamental group} of a pointed $\notp$-profinite space $(X,x)$

\[ \pi_1(X,x) := \pi_0 \map([S^1]^{\vee}_{\notp},X)\]
\end{definition}

We can now compute the fundamental group after $\notp$-profinite completion.

\begin{theorem}\label{notpprofinitepi1}
Let $X$ be a profinite space with $X^{\vee}_{\notp}$ its $\notp$-profinite completion and $x \in X$ a point. Then,

\begin{enumerate}
\item the pro-group $\pi_1(X^{\vee}_{\notp},x)$ is profinite,

\item the induced map $\pi_1(X,x) \rightarrow \pi_1(X^{\vee}_{\notp},x)$ witnesses the latter pro-group as the maximal prime-to-$p$ completion of the former profinite group.
\end{enumerate}
\end{theorem}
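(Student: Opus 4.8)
The plan is to unwind the definition of $\pi_1(X^{\vee}_{\notp},x)$ into a cofiltered limit of fundamental groups of genuine $\notp$-finite spaces and then recognize that limit, using throughout the description of pro-categories as left-exact functor categories together with the co-Yoneda functor (the Remark following the pro-category conventions, and Lemma \ref{pro-categorylemma}).

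First I would make the relevant objects explicit. Replacing $X$ by the connected component of $x$, write $X = \lim_i X_i$ as a cofiltered diagram with $X_i \in \ssetfc$ and compatible basepoints $x_i$. Since the localization $L \colon \profsset \to \profssetnotp$ is, by Lemma \ref{pro-categorylemma}(2), restriction of left-exact functors along $\ssetnotpfc \subset \ssetfc$, it is computed by pointwise filtered colimits, so $X^{\vee}_{\notp} \simeq \lim_i (X_i)^{\vee}_{\notp}$; and for a space $Y$ the $\notp$-completion $Y^{\vee}_{\notp}$ is the pro-object indexed by the (cofiltered) under-category of $Y$ in $\ssetnotpfc$, i.e. $Y^{\vee}_{\notp} \simeq \lim_{(Y \to W)} W$. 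Dually, $[S^1]^{\vee}_{\notp}$ corepresents the left-exact functor $\map_{\sset}(S^1,-)|_{\ssetnotpfc}$, so by co-Yoneda $\map_{\profssetnotp}([S^1]^{\vee}_{\notp}, W) \simeq \map_{\sset}(S^1, W)$ for every $W \in \ssetnotpfc$, with pointed version $\simeq \Omega W$. (Concretely $[S^1]^{\vee}_{\notp} \simeq B\widehat{\mathbb{Z}}^{(p')}$, though the proof will not need this.)

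Now mapping out of a fixed object carries the cofiltered limit in the target to a limit, so
\[ \map_{\profssetnotp,\ast}\big([S^1]^{\vee}_{\notp}, X^{\vee}_{\notp}\big) \;\simeq\; \lim_{(i,\,X_i \to W)} \map_{\sset,\ast}(S^1,W) \;\simeq\; \lim_{(i,\,X_i \to W)} \Omega W, \]
a cofiltered limit over the evident total category of $\pi$-finite spaces. Passing to $\pi_0$ and using that $\lim^1$ of a cofiltered system of finite groups vanishes, one gets
\[ \pi_1(X^{\vee}_{\notp},x) \;\simeq\; \lim_{(i,\,X_i \to W)} \pi_1(W,\ast), \]
an honest inverse limit of finite groups of order prime to $p$ (prime-to-$p$ because every object of $\ssetnotpfc$ has homotopy groups of order prime to $p$, this class being closed under finite limits and containing the generators); this proves (1). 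For (2), fix $i$: for any $W$ under $X_i$ the $1$-truncation $W \to B\pi_1(W)$ is a map in $\ssetnotpfc$ over $X_i$ inducing an isomorphism on $\pi_1$, and $B\pi_1(W)\in\ssetnotpfc$ since $\pi_1 W$ is finite of order prime to $p$; a direct check then shows that the subdiagram of classifying spaces $BH$ ($H$ finite, order prime to $p$) under $X_i$ computes the same limit, which by $[X_i, BH]_\ast = \hom(\pi_1 X_i, H)$ and the initiality of $\pi_1 X_i \to \pi_1(X_i)^{(p')}$ equals the maximal prime-to-$p$ quotient $\pi_1(X_i,x_i)^{(p')}$ of the finite group $\pi_1(X_i,x_i)$. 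Hence $\pi_1(X^{\vee}_{\notp},x) \simeq \lim_i \pi_1(X_i,x_i)^{(p')}$. Presenting $\pi_1(X,x) = \lim_i \pi_1(X_i,x_i)$ with surjective transition maps, the canonical map $\pi_1(X,x)\to\pi_1(X^{\vee}_{\notp},x)$ is the inverse limit of the quotients $\pi_1(X_i)\to\pi_1(X_i)^{(p')}$, whose kernels are cofinal among the open normal subgroups of prime-to-$p$ index in $\pi_1(X,x)$; this exhibits the target as the maximal prime-to-$p$ completion of $\pi_1(X,x)$.

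The main obstacle is the pro-object bookkeeping: checking that $L$ commutes with the cofiltered limit presenting $X$ and that the $\notp$-completion of a space is the pro-object on its under-category, and above all the passage from $\pi_0$ of a cofiltered limit of the $\pi$-finite spaces $\Omega W$ to the inverse limit of their $\pi_0$'s. The latter rests on the vanishing of $\lim^1$ for inverse systems of finite groups, which is immediate for towers by Mittag-Leffler but over a general cofiltered index requires first re-indexing over a cofinal well-ordered (or Mittag-Leffler) subsystem, exactly as is routinely handled in the profinite homotopy theory of Morel and Quick. The conceptual heart is the cofinality of $1$-types for the functor $W \mapsto \pi_1(W)$, and it is there that the hypothesis ``prime to $p$'' does its work, via $B\pi_1(W)\in\ssetnotpfc$.
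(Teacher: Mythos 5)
Your proposal is correct in substance, but it takes a genuinely different route from the paper. The paper argues levelwise and geometrically: for each $\pi$-finite stage $X_i$ it forms the homotopy pullback $N_pX_i$ of $X_i \to B\pi_1X_i \leftarrow BN_p\pi_1X_i$ (where $N_p\pi_1X_i$ is the normal subgroup generated by the Sylow $p$-subgroups), assembles the cofiber sequence $N_pX_\bullet \to X_\bullet \to CX_\bullet$ in Quick's model structure, and concludes by a van Kampen--type argument that $\notp$-completion kills exactly $N_p\pi_1X_i$, since a group generated by its $p$-Sylows maps trivially to prime-to-$p$ groups. You instead unwind the universal property of the localization: $X^{\vee}_{\notp}$ as the pro-object over the comma categories $X_i/\ssetnotpfc$, co-Yoneda to identify $\map([S^1]^{\vee}_{\notp},W)\simeq \Omega W$, initiality of the $1$-truncated subdiagram of classifying spaces $BH$, and the group-theoretic fact that homomorphisms to prime-to-$p$ finite groups factor through the maximal prime-to-$p$ quotient --- the same fact that powers the paper's argument, but reached purely formally. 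Your route buys independence from Quick's model category and from the paper's somewhat breezy step that ``$\pi_1$ commutes with homotopy cofiber sequences,'' and it mirrors the standard computation of $\pi_1$ of profinite and $p$-profinite completions; its cost is the pro-categorical bookkeeping you flag, above all the interchange $\pi_0(\lim)\cong\lim(\pi_0)$ over a general cofiltered diagram of $\pi$-finite spaces (true, via vanishing of derived limits of cofiltered systems of finite groups, but it deserves a citation or a re-indexing argument), plus minor care with pointed versus unpointed comma categories in the cofinality step. The paper's construction, by contrast, yields an explicit cofiber sequence that retains more of the completed homotopy type (it is what feeds the subsequent $BA_5$ example), which your purely $\pi_1$-level computation does not recover.
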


\begin{proof}
We immediately reduce to the connected case. If $X$ is a profinite space, we may represent it with some $1$-categorical diagram of $\pi$-finite spaces $X_\bullet:I \to \ssetfc$. Write $N_p \pi_1 X_i$ for the smallest normal subgroup of $\pi_1 X_i$ containing any and thus all of the Sylow $p$-subgroups. For each $i \in I$, we have a natural map $X_i \to B \pi_1 X_i$. Now, $BN_p \pi_1 X_i$ also maps into $B\pi_1 X_i$, by applying $B$ to the inclusion $N_p \pi_1 X_i \to \pi_1 X_i$. Finally, write $N_p X_i$ for the homotopy pull back. We have a natural map
\[ N_p X_i \to X_i \]
Applying the Mayer-Vietoris sequence for the fundamental group quickly shows that $\pi_1 N_p X_i \cong N_p \pi_1 X_i$ and the map above realizes this isomorphism. See Proposition 2.2.2 of \cite{may2011more} for details of the referenced sequence.

Write $CX_i$ for the homotopy cofiber of $N_p X_i \to X_i$. It is straightforward to check that we get a cofiber sequence of pro-simplicial sets in Quick's model structure. One checks $\pi_1$ in the obvious way, and one uses the $5$-lemma on the two long exact sequences for cohomology, noting that filtered colimits preserve long exact sequences. So we have a homotopy cofiber sequence in profinite spaces
\[ N_p X_\bullet \to X_\bullet \to CX_\bullet  \]
Since $\pi_1$ commutes with homotopy cofiber sequences, it is now enough to check that each $N_p X_i$ becomes simply connected after $\notp$-completion. But this is clear, as $\pi_1 N_pX_i$ is a quasi-$p$-group and so admits no maps to groups whose order is not divisible by $p$. In particular, we see that if we take a particular representation of $\pi_1(X,x)$ as a cofiltered system of finite groups that the above procedure exactly kills the $p$-parts of all groups in the diagram.
\end{proof}

\begin{example}
The map $BA_5 \to BA_5^+$ is a $\{3\}$-equivalence, where $-^+$ is Quillen's plus construction. In particular, one can apply the universal coefficient theorem and the Hurewicz theorem to conclude $\pi_2([BA_5]^\vee_{\{3\}}) \simeq \mathbb{Z}/2$.
\end{example}

\begin{theorem}\label{etalerealization}
Let $S$ be a separated noetherian finite dimensional scheme over $\mathbb{F}_p$, and write $\EuScript{S}\mathrm{pc}_{S,*}^{\mathbb{A}^1}$ for the $\infty$-category of pointed motivic spaces over $S$, then the $\notp$-profinite \'etale homotopy type gives an \'etale realization functor
\[ \profettoptype_\notp : \EuScript{S}\mathrm{pc}_{S,*}^{\mathbb{A}^1} \to \profssetnotp \]
which lifts Isaksen's $\ell$-adic \'etale realization functor to $\notp$-profinite spaces.
\end{theorem}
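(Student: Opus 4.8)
The plan is to construct $\profettoptype_{\notp}$ the way Isaksen constructs his realization --- as the colimit-preserving extension of $X \mapsto (\ettoptype X)^{\vee}_{\notp}$ off of smooth $S$-schemes --- the only genuinely new input being that this extension descends past the $\mathbb{A}^1$-localization once one has committed to $\notp$-completion. Since $\EuScript{S}\mathrm{pc}_{S,*}^{\mathbb{A}^1}$ is $\mathrm{PSh}(\mathrm{Sm}_S)$ pointed and then localized at the Nisnevich-local and $\mathbb{A}^1$-local equivalences, a colimit-preserving functor out of it into a cocomplete (pointed) $\infty$-category is the same datum as a functor out of $\mathrm{Sm}_S$ inverting the \v{C}ech nerves of Nisnevich covers and the projections $\mathbb{A}^1_X \to X$. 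First I record that $\profssetnotp \simeq \mathrm{Fun}^{lex}(\ssetnotpfc,\sset)^{op}$ is cocomplete --- colimits there are limits of left-exact accessible functors, which exist because a limit of left-exact functors is again left-exact --- so $F_0 \colon X \mapsto (\ettoptype X)^{\vee}_{\notp}$ left Kan extends to a colimit-preserving $F_1 \colon \mathrm{PSh}(\mathrm{Sm}_S) \to \profssetnotp$; the noetherian and finite-dimensionality hypotheses on $S$ are the standing assumptions keeping $\mathrm{Sm}_S$ essentially small and $\ettoptype X$ pro-$\pi$-finite, so that $\notp$-completion loses nothing. Nisnevich descent is then immediate: $\ettoptype$ is a sheaf of pro-spaces on the \'etale site (the hypercover-descent statement of Section~1), \'etale covers refine Nisnevich covers, and $(-)^{\vee}_{\notp} \colon \prosset \to \profssetnotp$ is a left adjoint hence preserves the colimits in question, so $F_1$ inverts the \v{C}ech nerves of Nisnevich covers and descends to $\mathrm{Sh}_{\mathrm{Nis}}(\mathrm{Sm}_S)$.

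The crux is $\mathbb{A}^1$-invariance: that $(\ettoptype \mathbb{A}^1_X)^{\vee}_{\notp} \to (\ettoptype X)^{\vee}_{\notp}$ is an equivalence for every $X \in \mathrm{Sm}_S$. Viewing $\ettoptype(-)$ as valued in $\prosset$ and $(-)^{\vee}_{\notp}$ as the localization $L \colon \prosset \to \profssetnotp$, Lemma~\ref{pro-categorylemma}(5) --- applied to the fully faithful, accessible, finite-limit-preserving inclusion $\ssetnotpfc \subset \sset$, which by construction is generated under finite limits by the $\ell$-finite spaces ($\ell \neq p$) together with the $BG$ for $p \nmid |G|$ --- reduces this to showing that the pullback map
\[ \map_{\prosset}(\ettoptype X, c(Z)) \longrightarrow \map_{\prosset}(\ettoptype \mathbb{A}^1_X, c(Z)) \]
is an equivalence for each such generator $Z$. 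By the co-representability of the shape recorded in Section~1 this mapping space is $\pi_*\pi^*Z$ for the \'etale $\infty$-topos of $X$ --- the \'etale cohomology space of $X$ with constant coefficients $Z$ --- so for $Z$ an $\ell$-finite space the assertion is the homotopy invariance of \'etale cohomology with $\ell$-torsion coefficients, and for $Z = BG$ it is the homotopy invariance of the maximal prime-to-$p$ \'etale fundamental group --- equivalently, that every finite \'etale cover of $\mathbb{A}^1_X$ of degree prime to $p$ descends to $X$ --- which in residue characteristic $p$ rests on the triviality of the tame fundamental group of the affine line together with the homotopy exact sequence. This is exactly where the hypotheses ``$S$ over $\mathbb{F}_p$'' and ``complete away from $p$'' are indispensable: over a base on which $p$ is not locally nilpotent, or without the $\notp$-completion, $\mathbb{A}^1$ is not \'etale contractible (Artin--Schreier). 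I expect marshalling these classical homotopy-invariance facts, and checking that they are precisely what Lemma~\ref{pro-categorylemma}(5) demands, to be the main obstacle; everything else is formal.

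Granting $\mathbb{A}^1$-invariance, $F_1$ descends to a colimit-preserving functor $\profettoptype_{\notp} \colon \EuScript{S}\mathrm{pc}_{S,*}^{\mathbb{A}^1} \to \profssetnotp$ (on the pointed category via $X_+ \mapsto (\ettoptype X)^{\vee}_{\notp}$, replacing $\ettoptype$ by the relative shape over $S$ if one wants an honestly reduced functor landing in pointed $\notp$-profinite spaces). To see that it lifts Isaksen's $\ell$-adic realization $\mathrm{Et}_\ell$, observe that both $\mathrm{Et}_\ell$ and the composite $L_\ell \circ \profettoptype_{\notp}$, where $L_\ell \colon \profssetnotp \to \profsset_\ell$ is the $\ell$-completion appearing in the chain of localizations of Lemma~\ref{pro-categorylemma}, are colimit-preserving functors $\EuScript{S}\mathrm{pc}_{S,*}^{\mathbb{A}^1} \to \profsset_\ell$, so it suffices to compare them on representables --- where each sends $X$ to the $\ell$-profinite completion of $\ettoptype X$, because $\ell$-completion factors through $\notp$-completion. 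Hence $L_\ell \circ \profettoptype_{\notp} \simeq \mathrm{Et}_\ell$, the asserted lift.
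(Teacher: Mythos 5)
Your proposal is correct and follows essentially the same route as the paper: Nisnevich descent is deduced from \'etale hypercover descent, and $\mathbb{A}^1$-invariance of the $\notp$-profinite \'etale homotopy type is reduced to prime-to-$p$ torsion cohomology plus prime-to-$p$ torsors, the latter settled exactly as in the paper by the homotopy exact sequence (SGA1 XIII.4.1) together with the vanishing of the prime-to-$p$ fundamental group of $\mathbb{A}^1$ over an algebraically closed field of characteristic $p$. The only differences are organizational: you run the reduction through Lemma \ref{pro-categorylemma}(5) (in effect Proposition \ref{notpprofiniteshapeequivalence}) where the paper defers to Isaksen's arguments before doing the fundamental-group step, and you make explicit the comparison with Isaksen's $\ell$-adic realization, which the paper leaves implicit.
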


\begin{proof}
Marco Robalo's thesis \cite{robalo-thesis} identifies the $\infty$-category of motivic spaces as (non-hypercompleted) Nisnevich sheaves of spaces on $Sm/S$ localized at the class of all projections $X \times \mathbb{A}^1 \to X$. By Corollary 2.25 of \cite{dag-xi}, the Nisnevich $\infty$-topos is already hypercomplete. Since the \'etale homotopy type satisfies hypercover descent for the \'etale topology, it \emph{a fortiori} satisfies descent for the Nisnevich topology. Thus we just need to show $\mathbb{A}^1$ homotopy invariance of the $\notp$-profinite \'etale homotopy type.

First, we observe that \cite{isaksen2004etale} proves the $\ell$-adic case. By his arguments for the $\ell$-adic case, this reduces the problem to computing the $G$-torsors of a smooth and geometrically pointed $S$-scheme $(X,x)$ and $(X \times \mathbb{A}^1, (x,0))$. In general it is very difficult to understand even the prime-to-$p$ quotients of products, however in this case we may apply Proposition XIII.4.1 of \cite{raynaud1971revetements} to deduce a certain exact sequence of profinite groups
\[ \pi_1^{p'} (\mathbb{A}^1_{\overline{k}},0) \to \pi_1' (X \times \mathbb{A}^1,(x,0)) \to  \pi_1(X,x)  \to 1\]
Now $\mathbb{A}^1$ over an algebraically closed field admits no covers of order prime to $p$, so we get an isomorphism between $\pi_1'(X \times \mathbb{A}^1,(x,0))$ and the profinite \'etale fundamental group $\pi_1(X,x)$. From this part of the proof we suppress the choice of base-points for notational convenience. The profinite group $\pi_1'(X \times \mathbb{A}^1)$ is defined to be $\pi_1(X \times \mathbb{A}^1)/N$ where $N$ is the subgroup
\[ N = \kernel \Big( \kernel(\pi_1(X \times \mathbb{A}^1) \to \pi_1(X)) \to \kernel (\pi_1(X \times \mathbb{A}^1) \to \pi_1(X))^{p'}\Big) \]
At this point we have the following exact sequence.
\[ 1 \to N \to \pi_1(X \times \mathbb{A}^1) \to \pi_1(X) \to 1 \]
Applying prime-to-$p$ quotients leaves it only right exact.
\[ N^{p'} \to \pi_1^{p'}(X \times \mathbb{A}^1) \to \pi_1^{p'}(X) \to 1 \]

However $N$ was the objects of the kernel of $\pi_1(X \times \mathbb{A}^1) \to \pi_1(X)$ which vanish after taking prime-to-$p$ quotients, and so the left map becomes zero after taking prime-to-$p$ quotients. We thus have an isomorphism
\[ \pi_1^{p'}(X \times \mathbb{A}^1) \simeq \pi_1^{p'}(X) \]
\end{proof}

Note that this will be further lifted by \cite{carchedi-elmanto}.

As another technical justification for the naturality of this construction, Theorem 2.4 of \cite{quickstableetale} allows us to perform a left Bousfield localization at any set of morphisms. This uses that Quick's model category is left proper, simplicial, and fibrantly generated which are proved in \cite{quick2008profinite} and \cite{quick2011continuous}. In fact it follows as a corollary of $\profsset$ being modeled by Quick's model category that $\profssetnotp$ is modeled by the left Bousfield localization at the given set of generators of $\ssetnotpfc$.

\subsection{Preliminary results on shapes}

We begin with the basic definitions of the objects and categories we will work with.

\begin{definition}
The following defines and names various localizations of the shape.
\begin{itemize}
\item We define the \emph{protruncated shape} $\protruncshape \EuScript{T}$ of an $\infty$-topos (resp. assigned to a 1-topos, resp. assigned to a site) to be the protruncated completion of the shape of the $\infty$-topos (resp. assigned to the 1-topos, resp. assigned to the site).

\item Define the \emph{profinite shape} $\profiniteshape \EuScript{T}$ of an $\infty$-topos (resp. assigned to a 1-topos, resp. assigned to a site) to be the profinite completion of the shape of the $\infty$-topos (resp. assigned to the 1-topos, resp. assigned to the site).

\item Define the \emph{$\notp$-profinite shape} $\notpprofiniteshape \EuScript{T}$ of an $\infty$-topos (resp. assigned to a 1-topos, resp. assigned to a site) to be the $\notp$-profinite completion of the shape of the $\infty$-topos (resp. assigned to the 1-topos, resp. assigned to the site).

\item Similarly, define the \emph{$p$-profinite shape} $\pprofiniteshape \EuScript{T}$ of an $\infty$-topos (resp. assigned to a 1-topos, resp. assigned to a site) to be the $p$-profinite completion of the shape of the $\infty$-topos (resp. assigned to the 1-topos, resp. assigned to the site).

\item A morphism of 1-topoi $f : T \rightarrow T'$ is called a \emph{shape equivalence} (resp. \emph{protruncated shape equivalence}, resp. \emph{profinite shape equivalence}) if the induced map on the shapes (resp. protruncated shapes, resp. profinite shapes) assigned to the hypercomplete $\infty$-topoi is a weak equivalence.
\end{itemize}
\end{definition}

The following propositions give us a way to verify the above criterion in the classical language of torsors and abelian sheaf cohomology. They follow mostly formally from the literature, but we state them for completeness' sake.

\begin{proposition}\label{shapeequivalence}
A morphism of locally connected topoi $f : T \rightarrow T'$ is a protruncated (resp. profinite) shape equivalence if and only if the following three conditions hold,

\begin{enumerate}
\item For any locally constant sheaf $\mathscr{S} \in T'$ of sets (resp. finite sets), we have $H^0(T',\mathscr{S}) \overset{\sim}{\rightarrow} H^0(T,f^* \mathscr{S})$ via the natural map,

\item for any group $G$ (resp. finite group) with associated constant sheaf $\mathscr{G} \in T'$, we have $\check{H}^1(T',\mathscr{G}) \overset{\sim}{\rightarrow} \check{H}^1(T,f^*\mathscr{G})$ via the natural map,

\item and for any locally constant sheaf $\mathscr{L} \in T'$ of abelian groups (resp. finite abelian groups of prime power order), we have that 
\[ H^q(T',\mathscr{L}) \overset{\sim}{\rightarrow} H^q(T,\mathscr{L})\]
for all $q \geq 0$ via the natural map.
\end{enumerate}
\end{proposition}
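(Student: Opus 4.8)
The plan is to unwind the definition, reduce everything to the corepresentability of the shape via Lemma \ref{pro-categorylemma}, and then recognize conditions (1)--(3) as precisely the statements about the Postnikov layers of the relevant mapping spaces. By definition $f$ is a protruncated (resp.\ profinite) shape equivalence exactly when the localization $L$ of Lemma \ref{pro-categorylemma} sends $\shape(f)\colon\shape(T)\to\shape(T')$ to an equivalence in $\prossettrunc$ (resp.\ $\profsset$). First I would reduce to $T,T'$ connected: each of (1)--(3) decomposes over connected components — extend a locally constant sheaf on a clopen subtopos by a point (for (1)) or by zero (for (3)) without changing its cohomology, and note that $\check H^1(T,\underline G)$ is a product over the components of $T$, so a componentwise bijection — and the components themselves are matched by condition (1) applied to constant sheaves of (finite) sets, which governs $\pi_0$ of the shape.

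With $T,T'$ connected, I would use part (4) of Lemma \ref{pro-categorylemma}: $L\shape(f)$ is an equivalence iff the pullback map $\map_{\prosset}(\shape(T'),cZ)\to\map_{\prosset}(\shape(T),cZ)$ is an equivalence for every $Z\in\ssettrunc$ (resp.\ $Z\in\ssetfc$). Since $\shape(T)$ corepresents $\pi_*\pi^*$, this mapping space is $\Gamma(T,\underline Z)=\pi_*\pi^*Z$, the global sections of the constant sheaf, and $Z\mapsto\Gamma(T,\underline Z)$ preserves finite limits ($\pi^*$ left exact, $\pi_*$ a right adjoint); so it sends the Postnikov tower of $Z$ — a finite tower of pullbacks of the twisted Eilenberg--MacLane layers $K_\sigma(\pi_n Z,n+1)$ over $B\pi_1 Z$ — to a finite tower of fibrations of spaces, and it suffices to treat the layers. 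The bottom layer $B\pi_1 Z$ gives a $1$-truncated space $\Gamma(T,\underline{B\pi_1 Z})$ with $\pi_0$ the pointed set $\check H^1(T,\underline{\pi_1 Z})$ of $\pi_1 Z$-torsors and $\pi_1$ at a torsor $P$ equal to $H^0(T,\underline{\aut}\,P)$ for the locally constant sheaf of (finite) groups $\underline{\aut}\,P$; so condition (2) matches $\pi_0$ and condition (1) matches every $\pi_1$. A higher layer $K_\sigma(A,n+1)$ gives a fibration over $\Gamma(T,\underline{B\pi_1 Z})$ whose fibre over $P$ has homotopy groups $H^{n+1-i}(T,\mathscr L_P)$, where $\mathscr L_P$ is the locally constant abelian sheaf on $T$ corresponding, under the equivalence between local systems on $\shape(T)$ and locally constant sheaves on $T$ established above and its compatibility with cohomology (that equivalence together with Theorem \ref{verdiercorepresents}), to $A$ with its $\pi_1 Z$-action twisted by $P$; decomposing $A=\prod_\ell A_\ell$ and $K_\sigma(A,n+1)=\prod_\ell K_{\sigma_\ell}(A_\ell,n+1)$ into prime-power pieces to meet the prime-power hypothesis in the profinite case, condition (3) matches these.

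Granting this dictionary, $(\Leftarrow)$ is immediate: (1)--(3) make the map an equivalence on every Postnikov layer of every $Z$ — a $1$-truncated map bijective on $\pi_0$ and on all $\pi_1$ for the bottom layer, and a map of fibrations over an equivalence that is a fibrewise equivalence for the higher layers — hence an equivalence on $\Gamma(T',\underline Z)\to\Gamma(T,\underline Z)$ for all $Z$, so $L\shape(f)$ is an equivalence. Conversely, for $(\Rightarrow)$, every quantity in (1)--(3) is extracted functorially from $\shape(T)$ through the same corepresentability and dictionary, and since the coefficient sheaves are truncated (resp.\ finite) the relevant cohomology factors through protruncation (resp.\ profinite completion); hence an equivalence of protruncated (resp.\ profinite) shapes forces (1)--(3).

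The step I expect to be the main obstacle is the honest treatment of the \emph{twisted} layers: one must check that $\pi_*\pi^*$ applied to $\underline{K_\sigma(A,n+1)}$ genuinely computes the cohomology of $T$ with the \emph{non-constant} locally constant sheaf $\mathscr L_P$ — this is exactly why (1) and (3) are stated for all locally constant sheaves and not only constant ones — and that the fundamental-groupoid layer is pinned down by $\check H^1$ together with $H^0$ of all the twisted automorphism sheaves. The remaining points — that $\ssettrunc$ (resp.\ $\ssetfc$) is built from its (twisted) Eilenberg--MacLane objects and (finite) sets under finite limits and retracts, and the identification $\check H^1=H^1$ in this setting — are routine.
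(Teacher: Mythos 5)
Your argument is correct in substance, but it takes a genuinely different route from the paper's. The paper dispatches the proposition in a few lines by translating conditions (1)--(3), via the earlier dictionary between locally constant sheaves and local systems on the shape, into the already-established characterizations of weak equivalences in Isaksen's non-strict model structure (for the protruncated case, through Theorem \ref{nonstrictmodels}) and in Quick's model structure for profinite spaces, adding only the remarks that fundamental pro-groupoids are detected by maps to $BG$'s and that a finite abelian sheaf is the direct sum of its primary parts. You instead stay entirely inside the $\infty$-categorical framework of Section 1: by Lemma \ref{pro-categorylemma}(4) the completed shape map is an equivalence iff $\Gamma(T',\underline{Z})\rightarrow\Gamma(T,\underline{Z})$ is an equivalence for every truncated (resp.\ $\pi$-finite) $Z$, and you verify this by induction along the Postnikov tower of $Z$, matching the $1$-truncated layer with (2) (torsors) plus (1) (their twisted automorphism sheaves), the twisted Eilenberg--MacLane layers with (3), and handling the prime-power restriction by primary decomposition. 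What your route buys is self-containedness---no appeal to Isaksen's or Quick's weak-equivalence criteria nor to the model-category comparison results---and it makes explicit exactly why (1) and (3) must be stated for locally constant rather than merely constant coefficients; the cost is that you must carry out the obstruction theory yourself, which the paper outsources.

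One step to tighten: as written, your tower argument presumes $Z$ connected (a single $B\pi_1 Z$), and the fallback claim that $\ssettrunc$ (resp.\ $\ssetfc$) is generated under finite limits and retracts by Eilenberg--MacLane objects and (finite) sets is neither needed for your argument nor clearly true; note also that $\Gamma=\pi_*\pi^*$ does not commute with coproducts, so you cannot simply decompose $Z$ into its components either. The standard repair is to run the relative Postnikov tower over the full $1$-truncation $Z_1$ (twisted Eilenberg--MacLane layers over $Z_1$ rather than over $B\pi_1 Z$): the equivalence on $\Gamma(\underline{Z_1})$ still follows from (1) and (2) together with the matching of $\pi_0$ via (1) for constant (finite) sets, and the fibers of the higher layers over points of $\Gamma(\underline{Z_1})$ are again cohomology with locally constant abelian coefficients, covered by (3). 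With that adjustment your proof goes through, and your preliminary reduction of $T$ and $T'$ to connected topoi (the most delicate of your reduction steps, especially profinitely when there are infinitely many components) becomes unnecessary.
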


\begin{proof}[Proof of protruncated case]
This is almost the exact criteria for a Whitehead equivalence in Isaksen's non-strict model category. Isaksen requires the map induce an isomorphism on fundamental pro-groupoids. But fundamental pro-groupoids are determined functorially by their maps to $BG$ spaces as $G$ varies over all groups.
\end{proof}

\begin{proof}[Proof of profinite case]
These are almost exactly the criteria for a map to be a weak equivalence in Quick's profinite spaces category. The difference is the same as in the protruncated case, once we show that we only need to check cohomology for sheaves of finite abelian $\ell$-primary groups. Quick's criterion would require an arbitrary locally constant sheaf $\mathscr{K}$ of finite abelian groups, but $\mathscr{K}$ is the direct sum of its $p$-primary submodule sheaves as $p$-ranges over all primes.
\end{proof}

\begin{proposition}\label{pprofshapeequivalence}
Let $f : T \rightarrow T'$ be a morphism of locally connected topoi. The following are equivalent,
\begin{enumerate}
\item the geometric morphism $f$ is a $p$-profinite shape equivalence,

\item the geometric morphism $f$ induces an isomorphism on $\mathbb{Z}/p$ cohomology in all degrees,

\[ f^* : H^i(T',\mathbb{Z}/p) \overset{\sim}{\rightarrow} H^i(T, \mathbb{Z}/p) \]
\end{enumerate}
\end{proposition}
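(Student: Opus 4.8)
The plan is to mimic the proof of the profinite case of Proposition~\ref{shapeequivalence}, with Morel's model category in place of Quick's. As recalled above, Morel's model structure on simplicial profinite sets presents $\profssetp$; its weak equivalences are precisely the maps inducing isomorphisms on continuous cohomology with $\mathbb{Z}/p$-coefficients in every degree, and --- unlike in the protruncated and profinite cases --- no separate hypothesis on fundamental pro-groups is needed here, since the spaces in question have pro-$p$ fundamental group and the only simple $\mathbb{Z}/p[G]$-module for a $p$-group $G$ is trivial, so twisted $\mathbb{Z}/p$-coefficients reduce to constant ones. Granting this, together with the fact that $p$-profinite completion preserves continuous $\mathbb{Z}/p$-cohomology --- the completion map $X_\bullet \to (X_\bullet)^\vee_p$ is a $\mathbb{Z}/p$-cohomology isomorphism, the pro-version of the classical fact that $X \to X^\wedge_p$ is an $\mathbb{F}_p$-homology equivalence --- the proposition reduces to the assertion that $\shape(f)$ induces an isomorphism on continuous $\mathbb{Z}/p$-cohomology of the shapes if and only if $f^*$ induces an isomorphism on $H^*(-;\mathbb{Z}/p)$ of the topoi.

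That reduction is nothing but the co-representing property of the shape. I would substitute $W = K(\mathbb{Z}/p,n)$ into $\map_{\prosset}(\shape\EuScript{T}, c(W)) \simeq \pi_*\pi^* W$ (with $\EuScript{T} = \hootopos(T)$): this identifies the mapping space with the global sections $\Gamma(\EuScript{T}; \underline{W})$ of the constant sheaf on $\EuScript{T}$ with value $K(\mathbb{Z}/p,n)$, whose homotopy groups are the sheaf cohomology groups $H^*(\EuScript{T};\mathbb{Z}/p) = H^*(T;\mathbb{Z}/p)$ --- the coefficients being discrete, the hypercomplete $\infty$-topos computes ordinary sheaf cohomology. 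On the other hand, mapping a pro-space $\{X_\alpha\}$ into a constant space $W$ yields the filtered colimit $\colim_\alpha \map(X_\alpha, W)$, so the same mapping space equally computes the continuous $\mathbb{Z}/p$-cohomology of $\shape\EuScript{T}$. Hence the continuous $\mathbb{Z}/p$-cohomology of the shape \emph{is} the sheaf $\mathbb{Z}/p$-cohomology of the topos, naturally in $T$ --- the same identification that already underlies the Verdier hypercover theorem (Theorem~\ref{verdiercorepresents}) and the cohomology-preservation clause of the proposition relating $LC(S)$ and $LS(\shape S)$.

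Assembling the pieces: $f$ is a $p$-profinite shape equivalence $\Leftrightarrow$ $\shape(f)^\vee_p$ is a weak equivalence in Morel's model $\Leftrightarrow$ it is an isomorphism on continuous $\mathbb{Z}/p$-cohomology $\Leftrightarrow$ (completion preserving that cohomology) so is $\shape(f)$ $\Leftrightarrow$ $f^*\colon H^i(T';\mathbb{Z}/p) \to H^i(T;\mathbb{Z}/p)$ is an isomorphism for all $i$. I would also record the purely $\infty$-categorical variant: by Lemma~\ref{pro-categorylemma}(4), $f$ is a $p$-profinite shape equivalence iff $\Gamma(\EuScript{T}';\underline Z) \to \Gamma(\EuScript{T};\underline Z)$ is an equivalence for every $p$-finite space $Z$, and one reduces to the case $Z = K(\mathbb{Z}/p,n)$ by first matching connected components of the topoi via $H^0(-;\mathbb{Z}/p)$, then climbing the finite Postnikov tower of each connected component of $Z$, using that a composition series of a finite $p$-group presents any coefficient sheaf as an iterated extension of copies of $\underline{\mathbb{Z}/p}$.

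I expect the only real obstacle to be bookkeeping rather than mathematics: pinning down the exact description of Morel's weak equivalences (and that no auxiliary $\pi_1$-condition is needed there), citing that $\mathbb{Z}/p$-completion is a $\mathbb{Z}/p$-cohomology isomorphism, and taking a little care with $H^0$ and the (pro-)set of connected components when $T, T'$ are not assumed connected. Modulo those points the proposition is formal, because --- as the proof of Proposition~\ref{shapeequivalence} already demonstrates --- the shape co-represents sheaf cohomology.
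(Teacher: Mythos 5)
Your argument is correct and follows essentially the same route as the paper: the paper's proof likewise rests on Morel's model structure (whose weak equivalences are exactly the $H^*(-,\mathbb{Z}/p)$-isomorphisms), its identification with Lurie's $p$-profinite $\infty$-category, and the group-theoretic reduction from general $p$-finite targets to $K(\mathbb{Z}/p,n)$'s, which the paper simply delegates to Section 7.3 of Barnea--Harpaz--Horel. Your write-up just makes explicit the co-representation of topos cohomology by the shape and the Postnikov/composition-series argument that the paper cites rather than spells out.
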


\begin{proof}
Unlike the above, this is not a direct tautology. The weak equivalences in Morel's model category are morphisms inducing isomorphisms on $H^i(-,\mathbb{Z}/p)$ cohomology groups. The equivalences in Lurie's $p$-profinite spaces $\infty$-category are morphisms inducing equivalences on mapping spaces to $p$-finite spaces. There is some group theory involved to reduce the general $p$-finite space case to checking that the map induces isomorphisms on $\mathbb{Z}/p$-valued cohomology. See section 7.3 of \cite{barnea2015pro} for details.
\end{proof}

\begin{proposition}\label{notpprofiniteshapeequivalence}
A morphism of locally connected topoi $f : T \rightarrow T'$ is a $\notp$-profinite shape equivalence if and only if the following three conditions hold,

\begin{enumerate}
\item For any locally constant sheaf $\mathscr{S} \in T'$ of finite sets, we have $H^0(T',\mathscr{S}) \overset{\sim}{\rightarrow} H^0(T,f^* \mathscr{S})$ via the natural map,

\item for any finite group $G$ whose order is not divisible by $p$ with associated constant sheaf $\mathscr{G} \in T'$, we have $\check{H}^1(T',\mathscr{G}) \overset{\sim}{\rightarrow} \check{H}^1(T,f^*\mathscr{G})$ via the natural map,

\item and for each prime $\ell \neq p$ and any locally constant sheaf $\mathscr{L} \in T'$ of finite abelian $\ell$-groups, we have that $H^q(T',\mathscr{L}) \overset{\sim}{\rightarrow} H^q(T,\mathscr{L})$ for all $q \geq 0$ via the natural map.
\end{enumerate}
\end{proposition}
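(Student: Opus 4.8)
The plan is to run the same argument as for the profinite case of Proposition~\ref{shapeequivalence}, but to organize the reduction through Lemma~\ref{pro-categorylemma} rather than through a model category. First I would apply that lemma to the finite-limit-preserving fully faithful inclusion $\ssetnotpfc \subset \sset$, whose associated localization $L : \prosset \to \profssetnotp$ is precisely $\notp$-profinite completion, so that $f$ is a $\notp$-profinite shape equivalence exactly when $L$ carries the induced map $\shape T \to \shape T'$ to an equivalence. By part~(5) of Lemma~\ref{pro-categorylemma}, together with the fact that $\ssetnotpfc$ is by definition generated under finite limits by the class $\EuScript{O}$ consisting of the $\ell$-finite spaces (for $\ell \neq p$) and the spaces $BG$ with $|G|$ prime to $p$, this happens if and only if $\map_{\prosset}(\shape T', cZ) \to \map_{\prosset}(\shape T, cZ)$ is an equivalence of spaces for every $Z \in \EuScript{O}$. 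Using the corepresentability of the shape recalled above, $\map_{\prosset}(\Pi_! \mathbbm{1}, c(-)) \simeq \pi_* \pi^*$, this comparison map is identified with $\pi^{T'}_* \pi_{T'}^* Z \to \pi^{T}_* \pi_{T}^* Z$ for the terminal geometric morphisms out of the two hypercomplete $\infty$-topoi.

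The second step is to unwind $\pi_* \pi^* Z$ for the two families of generators, reproducing the translation already done for Proposition~\ref{shapeequivalence}. For $Z = BG$ with $p \nmid |G|$ the space $\pi_* \pi^* BG$ is $1$-truncated, with $\pi_0$ the set of isomorphism classes of $G$-torsors (for a $1$-topos this is $\check{H}^1(T; \underline{G})$) and with $\pi_1$ based at a torsor $P$ equal to $H^0(T; \underline{\aut}\, P)$, the global sections of the locally constant sheaf of finite sets given by the inner twist of $G$ along $P$; so condition~(2) controls all the $\pi_0$'s and condition~(1) all the $\pi_1$'s, and these two conditions are together equivalent to the mapping-space equivalences for the $BG$'s. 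For $Z$ an $\ell$-finite space I would use a Postnikov d\'evissage: componentwise, $Z$ is built by finitely many pullbacks along $k$-invariants from Eilenberg--MacLane pieces $K(\mathbb{Z}/\ell, n)$ (using that finite $\ell$-groups, abelian or not, are assembled from $\mathbb{Z}/\ell$ by central extensions), and since $\pi^*$ and $\pi_*$ interact well with finite limits this reduces to $Z = K(A, n)$ with $A$ a finite abelian $\ell$-group, where $\pi_i \pi_* \pi^* K(A,n) = H^{n-i}(T; \mathscr{A})$ for the appropriate (possibly locally constant) coefficient sheaf $\mathscr{A}$ --- which is condition~(3), with the remaining low-degree pieces supplied by (1) and (2). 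As in the profinite case, the passage between ``finite'' and ``$\ell$-primary'' is handled by splitting a locally constant sheaf of finite abelian prime-to-$p$ groups into the direct sum of its $\ell$-primary parts over the primes $\ell \neq p$.

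\textbf{The main obstacle} is the non-abelian, non-nilpotent input. The generators $BG$ with $|G|$ prime to $p$ include $BA_5$, $BS_3$, and the like, which are invisible to mod-$\ell$ cohomology for any single prime $\ell$; this is exactly why condition~(2) must be imposed for all finite groups of order prime to $p$ at once rather than only for $\ell$-groups, and why the reduction to condition~(3) cannot absorb them. Verifying that torsor data together with the $H^0$-data of condition~(1) genuinely pins down the whole mapping space $\map(\shape T, BG)$ --- not merely its set of components --- is the delicate point, and it relies on the same group-theoretic input as the proof of Theorem~\ref{notpprofinitepi1} (killing quasi-$p$ subgroups and the resulting homotopy cofiber sequences) and the discussion of section 7.3 of \cite{barnea2015pro}. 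A secondary, bookkeeping, obstacle is to check that the d\'evissage of the $\ell$-finite generators and of the $BG$'s only ever calls on coefficient systems whose monodromy has order prime to $p$, so that conditions~(1) and~(3) are being applied exactly where the $\notp$-profinite shape is able to detect them.
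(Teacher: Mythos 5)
Your proposal is essentially the paper's own proof: it reduces via part (5) of Lemma \ref{pro-categorylemma} to checking mapping-space equivalences against the generators $BG$ (with $|G|$ prime to $p$) and the Eilenberg--MacLane/$\ell$-finite spaces, and then identifies those mapping spaces with the torsor and cohomology data of conditions (1)--(3) through the corepresentability of the shape, the only cosmetic difference being that the paper dispatches the higher homotopy of the mapping spaces with the loop--suspension adjunction where you use the $1$-truncated description of $\map(-,BG)$ and a Postnikov d\'evissage of the $\ell$-finite generators. The ``main obstacle'' you flag is not actually an obstacle: since $BG$ is $1$-truncated, its mapping space is determined by $\pi_0$ (torsors, condition (2)) and $\pi_1$ at each torsor ($H^0$ of the automorphism sheaf, a locally constant sheaf of finite sets covered by condition (1)), so no group-theoretic input in the style of Theorem \ref{notpprofinitepi1} is needed, and the paper's proof invokes none.
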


\begin{proof}
The assumptions above imply that the morphism of $\notp$-profinite spaces
\[ \pi_0 \map_{\profssetnotp} (\notpprofiniteshape \EuScript{T}',Z) \simeq \pi_0 \map_{\profssetnotp} (\notpprofiniteshape \EuScript{T},Z) \]
induces an equivalence on connected components when $Z$ is either a $BG$ or a $K(\mathbb{Z}/\ell^n,m)$ for any appropriate $G$, or any $n$ and $m$. We again use the loop-suspension adjunction to conlude that the higher homotopy groups are also equivalent via $f$ and deduce that the full mapping spaces are equivalent for any such $Z$. Since every object in $\ssetnotpfc$ is generated under finite limits by $BG$ and $K(\mathbb{Z}/\ell^n,m)$ spaces, by the last item of Lemma \ref{pro-categorylemma} we are done.
\end{proof}

We wish to study descent of the \'etale homotopy type. To give a general setup of the problem, let $X$ be a scheme and $\{ U_i \rightarrow X\}_{i \in I}$ a faithfully flat and quasi-compact (fpqc) cover of $X$.  We can form the \v{c}ech complex as a simplicial scheme $\check{U}_\bullet$, given by the zeroth coskeleton $\operatorname{cosk}_0^X(\coprod U_i)$. See for example Example 1.3 of \cite{friedlander1982etale} for a definition of coskeletoi.

The universal property of the colimit is that there is a contractible space of maps

\[ \colim_{[n] \in \Delta^{op}} \ettoptype \, \check{U}_n \rightarrow \ettoptype \, X \]

\noindent which makes $X$ a `cocone' over the diagram. If this map is an equivalence, then we say that the \'etale homotopy type satisfies descent for the \v{c}ech cover $\{U_i \rightarrow X \}_{i \in I}$.

More generally we could take a fpqc hypercover $Y_\bullet$ of $X$ and ask if the morphism from the colimit is an equivalence. The case where the schemes are \'etale over $X$ is the `Hypercover Descent' Theorem 3.4 of \cite{isaksen2004etale}. The following theorem formally generalizes Isaksen's Hypercover Descent theorem.

\begin{theorem}
\label{descent-of-shape}
Let $X$ be a category with finite products and fiber products, and let $P \prec Q$ be two Grothendieck topologies, with $P$ finer than $Q$. Write $P/X$ and $Q/X$ for the corresponding sites.

Now assume that for every object $Y \in X$ the natural functor between topoi induces an equivalence on any of the variants of the shapes,

\[ \shape^{\star} P/Y \simeq \shape^{\star} Q/Y \]

where $\shape^{\star}$ is either

\begin{enumerate}
\item the usual shape $\shape$,
\item the protruncated shape $\shape^{\natural}$,
\item the profinite shape $\profiniteshape$,
\item the $\notp$-profinite shape $\notpprofiniteshape$,
\item or the $p$-profinite shape $\pprofiniteshape$.
\end{enumerate}

Then if $Y_\bullet \rightarrow Z$ is a $Q$-hypercover of $Z \in X$, we have descent on the respective $P$-homotopy types,

\[  \shape^{\star} P/Z \simeq \colim_{[n] \in \Delta^{op}} \shape^{\star} P/Y_{[n]} \]

\end{theorem}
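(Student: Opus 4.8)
The plan is to reduce the statement to hypercover descent for the \emph{$Q$-shape} along the given $Q$-hypercover, and then to prove that reduced case by combining an explicit description of the shape of an \'etale $\infty$-topos with the fact that a hypercover becomes a colimit diagram in a \emph{hypercomplete} $\infty$-topos. First I would use the hypothesis: since $P$ is the finer topology, for every $Y \in X$ the comparison of topologies gives a geometric morphism $\hootopos(P/Y) \to \hootopos(Q/Y)$ (inverse image $=$ $P$-hypersheafification, direct image $=$ restriction), hence a comparison $\shape^{\star} P/Y \to \shape^{\star} Q/Y$ that is natural in $Y$; by hypothesis each such map is an equivalence. Applying this naturality to the whole simplicial object $Y_\bullet$ together with its augmentation $Z$ produces a levelwise equivalence of diagrams $\shape^{\star} P/Y_\bullet \xrightarrow{\ \sim\ } \shape^{\star} Q/Y_\bullet$ and an equivalence $\shape^{\star} P/Z \xrightarrow{\ \sim\ } \shape^{\star} Q/Z$. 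Since a levelwise equivalence of diagrams induces an equivalence on colimits, it then suffices to prove $\shape^{\star} Q/Z \simeq \colim_{[n] \in \Delta^{op}} \shape^{\star} Q/Y_{[n]}$.

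To prove that I would work inside the single hypercomplete $\infty$-topos $\EuScript{X} := \hootopos(Q/Z)$, with terminal object $\mathbbm{1}$ and hypersheafified Yoneda embedding $j : Q/Z \to \EuScript{X}$, and assemble two ingredients. The first is that for an object $U$ of $Q/Z$ the \'etale slice recovers the site, $\EuScript{X}/j(U) \simeq \hootopos(Q/U)$ — slicing a presheaf $\infty$-category by a representable recovers presheaves on the slice site, and this is compatible with hypersheafification — and moreover that the associated \'etale geometric morphism $\pi : \EuScript{X}/j(U) \to \EuScript{X}$ has inverse image $\pi^{*}$ (pullback along $j(U) \to \mathbbm{1}$) admitting a further left adjoint $\pi_{!}$ with $\pi_{!}\mathbbm{1}_{\EuScript{X}/j(U)} \simeq j(U)$. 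Since the constant-sheaf functor of $\EuScript{X}/j(U)$ factors as $\pi^{*}$ followed by that of $\EuScript{X}$, its pro-left adjoint is $\Pi_{!}^{\EuScript{X}} \circ \pi_{!}$, whence $\shape\, Q/U \simeq \Pi_{!}^{\EuScript{X}} j(U)$ (extended additively over coproducts when a level of the hypercover is a coproduct of representables, using that $\Pi_{!}^{\EuScript{X}}$ preserves coproducts). The second ingredient is that, because $Y_\bullet \to Z$ is a $Q$-hypercover and $\EuScript{X}$ is hypercomplete, the augmented simplicial object $j(Y_\bullet) \to \mathbbm{1}$ is a colimit diagram, i.e. $\mathbbm{1} \simeq \colim_{[n] \in \Delta^{op}} j(Y_{[n]})$; this is the characterization of hypercovers in hypercomplete $\infty$-topoi (see \cite{lurie2009higher}, \S 6.5.3, and \cite{dhi2004}), and it is precisely the reason the shape was defined through the hypercomplete topos rather than the ordinary one.

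With these in hand the conclusion is formal. The pro-left adjoint $\Pi_{!}^{\EuScript{X}}$ preserves colimits, and so does the localization $L : \prosset \to \EuScript{C}^{\star}$ onto one of $\prossettrunc$, $\profsset$, $\profssetnotp$, $\profssetp$ (or $L = \id$ for the plain shape), being a left adjoint by Lemma \ref{pro-categorylemma}; therefore
\begin{align*}
\shape^{\star} Q/Z \;=\; L\,\Pi_{!}^{\EuScript{X}}(\mathbbm{1}) &\;\simeq\; L\,\Pi_{!}^{\EuScript{X}}\!\left( \colim_{[n] \in \Delta^{op}} j(Y_{[n]}) \right) \\
&\;\simeq\; \colim_{[n] \in \Delta^{op}} L\,\Pi_{!}^{\EuScript{X}} j(Y_{[n]}) \;\simeq\; \colim_{[n] \in \Delta^{op}} \shape^{\star} Q/Y_{[n]} ,
\end{align*}
and combining this with the reduction of the first paragraph yields $\shape^{\star} P/Z \simeq \colim_{[n] \in \Delta^{op}} \shape^{\star} P/Y_{[n]}$.

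The step I expect to be the main obstacle is assembling the two $\infty$-topos ingredients of the second paragraph with the correct bookkeeping: identifying the \'etale slice $\EuScript{X}/j(U)$ with $\hootopos(Q/U)$ on the nose (including the interaction of slicing with hypercompletion, and levels of the hypercover that are coproducts rather than single representables), and — crucially — invoking hypercompleteness to know that the hypercover is a genuine colimit diagram, which fails in the ordinary $\infty$-topos of sheaves where only \v{C}ech nerves of covers are colimit diagrams. Once those facts are in place, everything downstream is just the colimit-preservation of the left adjoints $\Pi_{!}^{\EuScript{X}}$ and $L$ together with the naturality argument of the first paragraph.
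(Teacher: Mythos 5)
Your proposal is correct and takes essentially the same route as the paper: the paper also identifies the slice $[\hootopos Q/Z]/h^{Y}$ with $\hootopos Q/Y$, uses that the $Q$-hypercover becomes a colimit diagram of the terminal object in the hypercomplete $\infty$-topos, pushes the colimit through the left adjoints $L\Pi_{!}$ to get $Q$-descent, and then transfers to $P$ via the assumed levelwise shape equivalences. The only difference is cosmetic (you transfer to $Q$ first and prove descent second, while the paper does the reverse), and your extra care about hypercompleteness and coproduct levels just makes explicit what the paper leaves implicit.
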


We will need the following observation.

\begin{lemma}
Let $Y \rightarrow Z$ be a morphism in $X$. The slice $\infty$-topos $[\hootopos Q/X]/h^Y$ is equivalent to the hypercomplete $\infty$-topos of $Y$, $\hootopos Q/Y$.
\end{lemma}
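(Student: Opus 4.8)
The plan is to reduce to the classical fact that the slice of a sheaf $\infty$-topos at a representable object recovers the sheaf $\infty$-topos on the slice site, and then to check that this passes through hypercompletion. Write $\EuScript{E} = \ootopos Q/X$ for the (not necessarily hypercomplete) $\infty$-topos of $Q$-sheaves on $X$, so that $\hootopos Q/X = \EuScript{E}^{\wedge}$, and let $h^Y \in \EuScript{E}$ be the sheafification of the representable presheaf $\hom_X(-,Y)$; since $Q$ need not be subcanonical, it is this sheafification that the slice is formed at. I would first record that $h^Y$ is hypercomplete: the representable presheaf is set-valued, hence $0$-truncated, and sheafification is an accessible left-exact localization, so it commutes with truncation and $h^Y$ is $0$-truncated in $\EuScript{E}$; by \cite{lurie2009higher} every truncated object of an $\infty$-topos is hypercomplete. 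Thus $h^Y$ already lies in $\EuScript{E}^{\wedge} = \hootopos Q/X$ and the slice appearing in the lemma is well defined.

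Next I would establish the non-hypercomplete identity $\EuScript{E}_{/h^Y} \simeq \ootopos Q/Y$. At the level of presheaves this is the straightening equivalence $\mathrm{PSh}(X)_{/\hom_X(-,Y)} \simeq \mathrm{PSh}(X_{/Y})$; under it, the $Q$-local equivalences of $\mathrm{PSh}(X)$ lying over $\hom_X(-,Y)$ correspond exactly to the $Q_{/Y}$-local equivalences of $\mathrm{PSh}(X_{/Y})$, because a sieve on an object $(T \to Y)$ of $X_{/Y}$ is $Q_{/Y}$-covering precisely when the underlying sieve on $T$ is $Q$-covering. Since slicing a left-exact localization of a presheaf $\infty$-topos at an object again produces the corresponding left-exact localization of the slice (this uses that $\EuScript{X}_{/U}$ is an $\infty$-topos, \cite{lurie2009higher}), one obtains the equivalence; the $1$-categorical prototype is \cite{artin1973theorie}, Exposé III.

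Finally I would pass to hypercompletions. Applying $(-)^{\wedge}$ to the previous step gives $(\EuScript{E}_{/h^Y})^{\wedge} \simeq (\ootopos Q/Y)^{\wedge} = \hootopos Q/Y$, so it remains to identify $(\EuScript{E}_{/h^Y})^{\wedge}$ with $(\EuScript{E}^{\wedge})_{/h^Y} = [\hootopos Q/X]/h^Y$, i.e.\ that hypercompletion commutes with slicing over a hypercomplete object. The forgetful functor $\EuScript{E}_{/h^Y} \to \EuScript{E}$ preserves and detects $\infty$-connective morphisms, so it is enough to check that $(V \to h^Y)$ is hypercomplete in $\EuScript{E}_{/h^Y}$ if and only if $V$ is hypercomplete in $\EuScript{E}$; this follows from the pullback description $\map_{\EuScript{E}_{/h^Y}}(W, V) \simeq \map_{\EuScript{E}}(W, V) \times_{\map_{\EuScript{E}}(W, h^Y)} \{\ast\}$, using that $h^Y$ being hypercomplete makes the base of this square insensitive to $\infty$-connective maps $W' \to W$. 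Chaining the three equivalences yields $[\hootopos Q/X]/h^Y \simeq \hootopos Q/Y$. I expect this last compatibility to be the only genuine content — the first two steps being formal or classical — so the main obstacle is ensuring that the truncatedness of $h^Y$ really does its job there; alternatively the statement can be extracted from \cite{lurie2009higher}'s treatment of hypercomplete $\infty$-topoi together with the fact that slice projections $\EuScript{X}_{/U} \to \EuScript{X}$ are étale geometric morphisms.
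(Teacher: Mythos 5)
Your argument is correct, and it supplies exactly the details that the paper declines to give: the paper's own ``proof'' is a one-line appeal to the universal properties of the slice topos and of hypercompletion, with the details left to the reader. Your decomposition is a sensible way to make that precise, and the two formal steps (that $h^Y$, being the sheafification of a $0$-truncated presheaf, is truncated and hence hypercomplete; and that $\mathrm{Sh}(X)_{/h^Y} \simeq \mathrm{Sh}(X_{/Y})$ via straightening of presheaves plus matching of the $Q$- and $Q_{/Y}$-local equivalences) are standard. The only genuine content is, as you say, the commutation of hypercompletion with slicing over a hypercomplete object, and your criterion is the right one: since the forgetful functor $\EuScript{E}_{/h^Y} \to \EuScript{E}$ preserves and detects $\infty$-connective morphisms (because $(\EuScript{E}_{/h^Y})_{/(V \to h^Y)} \simeq \EuScript{E}_{/V}$), an object $V \to h^Y$ is hypercomplete in the slice if and only if $V$ is hypercomplete in $\EuScript{E}$, both directions using the fibre-sequence description of slice mapping spaces together with the hypercompleteness of $h^Y$; note that what you really use is hypercompleteness of $h^Y$, truncatedness being merely the easiest way to guarantee it. If you want a shortcut for that last step, you can instead invoke the fact from \cite{lurie2009higher} that the slice projection is an \'etale geometric morphism and that a slice of a hypercomplete $\infty$-topos over any object is again hypercomplete, which identifies $[\hootopos Q/X]/h^Y$ directly as the hypercompletion of $\ootopos Q/Y$ once step two is in place; either way the conclusion stands.
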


The proof follows from the universal properties of the slice topos and of the hypercompletion, and so we leave the details to the reader. Armed with the previous lemma, we now tackle the proof of the theorem.

\begin{proof}
Each of the completion operations are left adjoints to materialization, and so they commute with colimits. For brevity, we write $L$ for the relevant completion operation. We have the following equivalence.
\[ \shape^{\star} Q/Z = L\Pi_!(h^Z) \simeq L\Pi_!(\colim_{[n] \in \Delta^{op}} \mathbbm{1}_{Y_{[n]}} ) \]
However $L\Pi_!$ is a left adjoint, the colimit may be brought outside.
\[ L\Pi_!(\colim_{[n] \in \Delta^{op}} \mathbbm{1}_{Y_{[n]}} ) \simeq \colim_{[n] \in \Delta^{op}} L\Pi_!( \mathbbm{1}_Y{_{[n]}} )\]
But of course the right hand side is the colimit of the shapes.
\[ \colim_{[n] \in \Delta^{op}} L\Pi_!( \mathbbm{1}_{Y_{[n]}} ) \simeq \colim_{[n] \in \Delta^{op}} \shape^{\star} Q/Y_{[n]} \]
Collecting the equivalences, we have shown hypercover descent for the $Q$-topology
\[ \shape^{\star} Q/Z  \simeq \colim_{[n] \in \Delta^{op}} \shape^{\star} Q/Y_{[n]} \]
so now we just use the assumptions of the shapes being equivalent,
\[ \shape^{\star} P/Z \simeq \shape^{\star} Q/Z \simeq \colim_{[n] \in \Delta^{op}} \shape^{\star} Q/Y_{[n]} \simeq \colim_{[n] \in \Delta^{op}} \shape^{\star} P/Y_{[n]} \]
and conclude the theorem.
\end{proof}

Assumption $(1)$ of Theorem \ref{descent-of-shape} leads to the strongest conclusion, but does not apply for many topologies arising for algebraic geometry.

\begin{corollary}
The \'etale homotopy type satisfies descent for smooth hypercovers.
\end{corollary}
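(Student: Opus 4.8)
The plan is to deduce the statement from Theorem~\ref{descent-of-shape}. I would work in the category $\mathrm{Sch}/Z$ of $Z$-schemes, which has finite products and fiber products, take $Q$ to be the smooth topology --- so that a smooth hypercover $Y_\bullet \to Z$ is exactly a $Q$-hypercover --- and take $P$ to be the \'etale topology, so that $\shape^{\star} P/(-)$ will recover the \'etale homotopy type. Every \'etale covering family is a smooth covering family, so we are in the comparison situation of the theorem, and the two things left to check are: the hypothesis $\shape^{\star} P/Y \simeq \shape^{\star} Q/Y$ for every $Y$, for each variant $\shape^{\star} \in \{\shape, \protruncshape, \profiniteshape, \notpprofiniteshape, \pprofiniteshape\}$; and that $\shape^{\star} P/Y$ is indeed $\ettoptype Y$.

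For the first point, the one geometric input is the classical fact that a smooth surjective morphism of schemes admits a section \'etale-locally on the base (EGA~IV~17.16.3). Hence any smooth covering $\{U_i \to X\}$ is refined by an \'etale covering: cover $X$ by the \'etale neighborhoods over which $\coprod_i U_i \to X$ acquires a section and postcompose each section with a projection to some $U_i$. Together with the reverse inclusion, this shows that the smooth and \'etale topologies on $\mathrm{Sch}/Z$ have the same covering sieves, hence $P = Q$ as Grothendieck topologies. In particular the sites $P/Y$ and $Q/Y$ coincide, so the hypothesis of Theorem~\ref{descent-of-shape} holds trivially for every variant.

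For the second point, I would observe that the big and small \'etale sites of $Y$ have the same shape, because every hypercover $W_\bullet \to Y$ in the big \'etale site automatically has all its terms \'etale over $Y$: inductively, $W_n$ is an \'etale cover of $(\cosk_{n-1} W_\bullet)_n$, which is a finite limit over $Y$ of schemes \'etale over $Y$ and hence \'etale over $Y$, so $W_n$ is \'etale over $Y$. Thus the big- and small-\'etale hypercovers of $Y$ form the same category, and by the Verdier-functor description of the shape (Theorem~\ref{verdiercorepresents}), followed by the relevant completion functor, $\shape^{\star} P/Y \simeq \ettoptype Y$.

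Assembling these, Theorem~\ref{descent-of-shape} applied to a smooth hypercover $Y_\bullet \to Z$, viewed as a $Q$-hypercover, yields
\[ \ettoptype Z \;\simeq\; \shape^{\star} P/Z \;\simeq\; \colim_{[n]\in\Delta^{op}} \shape^{\star} P/Y_{[n]} \;\simeq\; \colim_{[n]\in\Delta^{op}} \ettoptype Y_{[n]}, \]
which is the asserted descent. I do not expect a genuine obstacle here: the argument is essentially a packaging of Theorem~\ref{descent-of-shape}, and its only non-formal ingredient is the \'etale-local-section property of smooth morphisms. The step that warrants the most care is the identification $\shape^{\star} P/Y \simeq \ettoptype Y$ for the big site, which is why I have isolated it above.
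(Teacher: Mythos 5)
Your main reduction is the same as the paper's: the paper also deduces the corollary by applying Theorem \ref{descent-of-shape} with $P$ the \'etale and $Q$ the smooth topology (it slices over $Sm/V$ where you use $\mathrm{Sch}/Z$, which changes nothing essential), and the geometric input you supply --- smooth surjections admit sections \'etale-locally, so every smooth covering sieve is an \'etale covering sieve and the two Grothendieck topologies coincide --- is exactly what makes the hypothesis $\shape^{\star} P/Y \simeq \shape^{\star} Q/Y$ hold trivially. Up to this point your argument is correct and is what the paper's ``follows immediately'' is gesturing at.

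The step you isolate as delicate, however, is not right as you argue it. In a hypercover of the big \'etale site the maps $W_n \to (\cosk_{n-1} W_\bullet)_n$ are only required to be covers for the topology, i.e.\ to generate covering sieves (local epimorphisms); they need not be \'etale surjections. By the very fact you invoke in your first point, any smooth surjection --- for instance $\mathbb{A}^1_Y \to Y$ --- generates an \'etale covering sieve, so the big site has plenty of hypercovers whose terms are not \'etale over $Y$, and the inductive claim that all terms of a big-site hypercover are \'etale over $Y$ fails. The conclusion you want, namely that $\shape^{\star}$ of the big \'etale site over $Y$ is the (completed) $\ettoptype Y$ computed from the small site, is nonetheless true, but it needs a different justification: every covering sieve in the big \'etale site of an object \'etale over $Y$ contains a small-site covering family, so any big-site hypercover of $Y$ can be refined by one all of whose terms are \'etale over $Y$, and such hypercovers are cofinal in $HR$ of the big site, whence the two Verdier colimits of Theorem \ref{verdiercorepresents} agree; equivalently, for the restriction geometric morphism $\rho$ from the big to the small \'etale topos one checks that the unit $\mathrm{id} \to \rho_* \rho^*$ is an equivalence on the sheaves relevant to the shape, which again gives the identification. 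Note that the paper's own proof leaves this big-versus-small comparison entirely implicit, so your instinct to flag it was sound even though the argument you gave for it needs to be repaired as above.
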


\begin{proof}
Let $V$ be an arbitrary scheme. We can take $X = Sm/V$ and the two topologies to be the smooth topology and the \'etale topology. Then this follows immediately from Theorem \ref{descent-of-shape}
\end{proof}

\begin{corollary}
The profinitely completed \'etale homotopy type satisfies descent for fppf hypercovers.
\end{corollary}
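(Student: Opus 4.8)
The plan is to deduce this from Theorem~\ref{descent-of-shape} with $\shape^{\star}$ taken to be the profinite shape $\profiniteshape$. Fix a base scheme $V$, let $X = Sch/V$, let $P$ be the \'etale topology and $Q$ the fppf topology, so that $P \prec Q$ since every \'etale cover is an fppf cover. With these choices the output of Theorem~\ref{descent-of-shape}, namely $\profiniteshape(P/Z) \simeq \colim_{[n] \in \Delta^{op}} \profiniteshape(P/Y_{[n]})$ for a $Q$-hypercover $Y_\bullet \to Z$, is exactly the assertion that the profinitely completed \'etale homotopy type satisfies descent for fppf hypercovers. So all the content is in verifying the hypothesis of Theorem~\ref{descent-of-shape}: for every $Y \in X$, the canonical geometric morphism $\epsilon$ from the fppf topos of $Y$ to the \'etale topos of $Y$ (induced by the inclusion of \'etale covers into fppf covers, with $\epsilon_{*}$ the inclusion of fppf sheaves into \'etale sheaves and $\epsilon^{*}$ fppf-sheafification) is a profinite shape equivalence.

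To check that, I would apply the profinite case of Proposition~\ref{shapeequivalence} to $\epsilon$ and verify its three conditions. The key preliminary observation is that the categories of locally constant sheaves of finite sets on the \'etale and fppf topoi of $Y$ coincide, compatibly with $\epsilon^{*}$: a sheaf that is fppf-locally isomorphic to a finite constant sheaf is represented by a $Y$-scheme which is fppf-locally finite \'etale over $Y$, hence finite \'etale over $Y$ by fpqc descent of the property of being finite \'etale; conversely every finite \'etale $Y$-scheme is already \'etale-locally, a fortiori fppf-locally, a disjoint union of copies of $Y$. This gives condition (1) ($H^{0}$ of such a sheaf is its set of $Y$-points, computed identically on either site) and condition (2) (a torsor under a finite constant group $G$ on the fppf site is, by the same descent argument, a finite \'etale $G$-torsor, so $\check{H}^{1}(-,G)$ agrees). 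For condition (3), let $\mathscr{L}$ be a locally constant sheaf on the \'etale topos of $Y$ of finite abelian groups of prime-power order; by the observation above $\mathscr{L}$ is representable by a finite \'etale --- hence \emph{smooth} --- commutative $Y$-group scheme, and the classical comparison of fppf and \'etale cohomology for smooth commutative group schemes (going back to Grothendieck) yields $H^{q}_{\mathrm{fppf}}(Y,\mathscr{L}) \cong H^{q}_{\acute{e}t}(Y,\mathscr{L})$ for all $q \ge 0$. Proposition~\ref{shapeequivalence} then shows $\epsilon$ is a profinite shape equivalence, i.e.\ $\profiniteshape(P/Y) \simeq \profiniteshape(Q/Y)$, and feeding this into Theorem~\ref{descent-of-shape} gives the corollary.

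The step I expect to be the main obstacle --- equivalently, the place where the restriction to \emph{profinite} completion is genuinely used --- is condition (3): the equality $H^{q}_{\mathrm{fppf}} = H^{q}_{\acute{e}t}$ holds for the smooth group schemes representing finite locally constant torsion sheaves, but fails for infinitesimal group schemes such as $\mu_{p}$ or $\alpha_{p}$ in residue characteristic $p$; these simply never arise here, since such group schemes, being non-reduced, are not locally constant in either topology. Two minor points also need attention: one must know that the relevant (big) \'etale and fppf topoi of $Y$ are locally connected so that Proposition~\ref{shapeequivalence} applies, and that passing to the fppf topology does not enlarge the category of finite locally constant sheaves --- both handled by the fpqc descent argument above. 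The rest is the formal manipulation of colimits and completions already carried out in the proof of Theorem~\ref{descent-of-shape}.
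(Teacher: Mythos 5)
Your proposal is correct and follows essentially the same route as the paper: the paper's proof simply cites the \'etale--fppf cohomological comparison (Stacks Project) and the representability of finite torsors (Milne, Theorem III.4.3), which are exactly the two inputs you verify in order to apply Proposition \ref{shapeequivalence} and then Theorem \ref{descent-of-shape}. Your extra observations (fpqc descent to see that fppf-locally constant finite sheaves are finite \'etale, and that infinitesimal group schemes like $\mu_p$, $\alpha_p$ never occur) just make explicit what the paper leaves to the references.
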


\begin{proof}
See Lemma 53.82.10 of \cite{stacks-project} for the cohomological comparison. The representability in schemes of finite torsors in both categories is a consequence of Theorem III.4.3 of \cite{milne-etalebook}.
\end{proof}

Finally we end with a pair of purely topological lemmas. First we need a definition, compare to Definition 8.2.7 of \cite{huber2013etale}.

\begin{definition}
Let $X$ be a topological space. A sheaf $\mathscr{F}$ of sets or $\infty$-groupoids on $X$ is called \emph{overconvergent} if for every pair of fiber functors $x^*,y^* : \topos X \rightarrow Set$ and every natural transformation $x^* \leadsto y^*$ the natural transformation gives an equivalence

\[ x^* \mathscr{F} \simeq y^* \mathscr{F} \]
\end{definition}

\begin{lemma}\label{lcsareoverconvergent}
Let $X$ be a locally spectral topological space. Then every locally constant sheaf $\mathscr{F}$ of sets or $\infty$-groupoids is overconvergent.
\end{lemma}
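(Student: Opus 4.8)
The plan is to reduce the statement to a purely local one about spectral spaces, and then to the key combinatorial fact that a spectral space is connected in a neighborhood of any point after passing to generizations. First I would recall that a locally spectral space $X$ has, by definition, an open cover by spectral spaces, and that both the property of being locally constant and the property of being overconvergent are local on $X$: a natural transformation $x^* \leadsto y^*$ of fiber functors on $\topos X$ forces $x$ and $y$ to have the same image in any open set, since $x^*$ factors through the stalk and a generization relation is detected on any open neighborhood. (Here I am using that on a spectral, or more generally sober, space the fiber functors are exactly the points, and a natural transformation $x^* \leadsto y^*$ exists precisely when $y$ is a generization of $x$, i.e. $y \in \overline{\{x\}}^{\,\mathrm{gen}}$ in Huber's sense — this is the standard identification behind Definition 8.2.7 of \cite{huber2013etale}.) So I may assume $X$ is spectral and $\mathscr{F}$ is a locally constant sheaf, and I must show $x^*\mathscr{F} \simeq y^*\mathscr{F}$ whenever $y$ generizes $x$.

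The main step is then the following topological observation: if $y$ is a generization of $x$ in a spectral space $X$, then $x$ and $y$ lie in a common connected subset on which $\mathscr{F}$ is constant. Concretely, choose a quasi-compact open $U \ni x$ (hence also $y \in U$, as $U$ is open and $y$ generizes $x$) small enough that $\mathscr{F}|_U$ is pulled back, via the locally-constant hypothesis, along a covering trivializing it; refining, I can find a quasi-compact open $U$ with a finite partition into quasi-compact opens on which $\mathscr{F}$ is constant — but $x$ and $y$, being related by a generization, must lie in the same connected component of $U$, since the connected components of a spectral space are closed and stable under generization, so $y$ cannot fall into a different clopen piece from $x$. On the connected piece $C \ni x, y$ the sheaf $\mathscr{F}|_C$ is constant, hence $x^*\mathscr{F} \cong \mathscr{F}(C) \cong y^*\mathscr{F}$, and one checks the comparison map induced by the natural transformation $x^* \leadsto y^*$ is exactly this identification.

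I expect the main obstacle to be the bookkeeping in the $\infty$-groupoid case: for sheaves of sets "constant on a connected open" is immediate, but for sheaves of $\infty$-groupoids I must argue that a locally constant sheaf restricted to a connected quasi-compact open is genuinely constant (not merely locally constant), which uses that such an open is, after a finite refinement, built from connected pieces with no interesting gluing along generizations — i.e. I should invoke that $\topos C$ for $C$ connected and the relevant covers is such that locally constant sheaves with a fixed finite trivializing cover are classified by a finite group(oid) action, and then that the generization natural transformation acts trivially because it lifts compatibly through the cover. Alternatively, and perhaps more cleanly, I would phrase the whole argument in terms of the specialization/generization preorder: overconvergence of $\mathscr{F}$ is equivalent to $\mathscr{F}$ being pulled back from the maximal Hausdorff (or maximal generizing) quotient, and a locally constant sheaf on a locally spectral space trivially has this property because its trivializing cover can be chosen by quasi-compact opens, which are themselves overconvergent (stable under generization). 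This second formulation sidesteps connectedness entirely and is the route I would ultimately take to keep the proof short.
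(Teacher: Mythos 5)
Your write-up has a genuine gap at its central step. You claim that after refining you can find a quasi-compact open $U \ni x$ admitting a finite \emph{partition} into quasi-compact opens on which $\mathscr{F}$ is constant, and hence that $\mathscr{F}$ is constant on the connected piece $C$ containing $x$ and $y$. This is false in general: locally constant sheaves on a connected spectral space need not be constant, so no clopen decomposition of $U$ with constant pieces exists. A concrete counterexample is any finite sober (hence spectral) model of the circle, e.g.\ the four-point pseudocircle, which is connected but carries nontrivial locally constant sheaves of sets; the anticipated ``obstacle'' you flag in the $\infty$-groupoid case (``a locally constant sheaf on a connected quasi-compact open is genuinely constant'') is therefore not a bookkeeping issue but an unprovable statement. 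Your proposed cleaner alternative fails for the same reason: overconvergence is \emph{not} equivalent to being pulled back from the maximal Hausdorff quotient (the pseudocircle's maximal Hausdorff quotient is a point, yet its nontrivial locally constant sheaves are overconvergent by the very lemma being proved), and the remark that the trivializing opens are generization-stable is vacuous, since every open set is stable under generization.

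The fix is the much simpler argument the paper uses, which your first sentence almost reaches before the detour: the topos of $X$ has enough points, and every morphism of fiber functors is a specialization/generization pair $g \leadsto s$. Choose any open $V \ni s$ on which $\mathscr{F}$ is constant (a member of a trivializing cover through the specialized point suffices); since $V$ is open and $g$ generizes $s$, automatically $g \in V$, and on a constant sheaf both stalks are the constant value with the cospecialization map $\mathscr{F}_s \to \mathscr{F}_g$ the identity, hence an equivalence. No connectedness, no partition, and no reduction to spectral pieces is needed; the only point to keep is the one you noted correctly, namely that natural transformations of fiber functors correspond to generizations, so it suffices to treat such pairs.
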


\begin{proof}
The underlying set of $X$ provides its topos with enough points. We prove this for a specialization of points $g \leadsto s$. Choose a neighborhood $U$ of $s$ on which $\mathscr{F}$ is constant. Then $U$ is also a neighborhood of $g$ on which $\mathscr{F}$ is constant, and so we see that $\mathscr{F}_s \rightarrow \mathscr{F}_g$ is an equivalence. Since all morphisms of fiber functors are generalizations, we are done.
\end{proof}

\begin{corollary}\label{overconvergentcorollary}
Assume that the full sub-category of hypercomplete overconvergent sheaves of $\infty$-groupoids on a locally spectral topological space $X$ forms an $\infty$-topos, $\EuScript{T}_{oc}$, such that the inclusion $i:\EuScript{T}_{oc} \subset \EuScript{T}$ is the left adjoint of a geometric morphism. Then the shape of the hypercomplete $\infty$-topos $\EuScript{T}$ of $X$ is equivalent to the shape of $\EuScript{T}_{oc}$.
\end{corollary}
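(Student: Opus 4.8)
The approach is to reduce the claim to the characterisation of the shape as the pro-space corepresenting $\Pi_*\Pi^* : \sset \to \sset$ (Lurie's $\pi_*\pi^*$), and then to notice that this functor is left unchanged because the inclusion $i$ is fully faithful. First I would name the geometric morphism: by hypothesis $i : \EuScript{T}_{oc} \hookrightarrow \EuScript{T}$ is the inverse image functor of a geometric morphism, which I write $g : \EuScript{T} \to \EuScript{T}_{oc}$, so that $g^* = i$ and $g_*$ is its right adjoint. Since $g^* = i$ is a fully faithful left adjoint, the unit $\eta : \id_{\EuScript{T}_{oc}} \to g_* g^*$ is an equivalence.

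Next I would compare the two ``global sections'' geometric morphisms. Let $\Pi : \EuScript{T} \to \sset$ and $\rho : \EuScript{T}_{oc} \to \sset$ be the essentially unique geometric morphisms to the terminal $\infty$-topos. Because the space of geometric morphisms to $\sset$ is contractible, $\Pi \simeq \rho \circ g$, whence $\Pi^* \simeq g^*\rho^* = i\,\rho^*$ and $\Pi_* \simeq \rho_* g_*$. (Concretely, constant sheaves on $X$ are locally constant, hence overconvergent by Lemma~\ref{lcsareoverconvergent}, so $\Pi^*$ really does land in $\EuScript{T}_{oc}$; but this factorisation is already forced by uniqueness.) Combining these, $\Pi_*\Pi^* \simeq \rho_*\, g_* g^*\, \rho^*$, and the natural transformation $\rho_*\rho^* \to \Pi_*\Pi^*$ obtained by whiskering the unit, i.e. $\rho_*(\eta_{\rho^*})$, is an equivalence since $\eta$ is.

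Finally I would invoke corepresentability. By construction $\shape\EuScript{T} = \Pi_!\mathbbm{1}_{\EuScript{T}}$ satisfies $\map_{\prosset}(\shape\EuScript{T}, c(-)) \simeq \Pi_*\Pi^*$, and likewise $\map_{\prosset}(\shape\EuScript{T}_{oc}, c(-)) \simeq \rho_*\rho^*$; moreover, by the way the shape is functorial in geometric morphisms, the map $\shape\EuScript{T} \to \shape\EuScript{T}_{oc}$ induced by $g$ corresponds under corepresentability to exactly the natural transformation $\rho_*\rho^* \to \Pi_*\Pi^*$ of the previous paragraph. Since an object of $\prosset \simeq \mathrm{Fun}^{lex}(\sset,\sset)^{op}$ is detected by the functor it corepresents (as recalled in the opening remark), a morphism of pro-spaces is an equivalence once it induces equivalences on all $\map_{\prosset}(-,c(K))$; hence $\shape\EuScript{T} \to \shape\EuScript{T}_{oc}$ is an equivalence. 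The step needing the most care — and the only non-formal one — is the variance bookkeeping in this last paragraph: one must verify that $g$ induces precisely the unit-whiskering natural transformation, in the stated direction. Here it is essential that $i$ is given as the \emph{inverse image} (left adjoint) of a geometric morphism; had $i$ instead been the direct image of a subtopos inclusion, it would be the counit rather than the unit that is invertible, and the shapes would \emph{not} be forced to agree.
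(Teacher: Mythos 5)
Your argument is correct and is essentially the paper's own proof: you name the geometric morphism with inverse image $i$, use full faithfulness to make the unit $\id \to g_*g^*$ invertible, factor the global-sections morphism of $\EuScript{T}$ through $\EuScript{T}_{oc}$, and conclude via the corepresentability of the shape that $\Pi_*\Pi^* \simeq \rho_*\rho^*$. The extra bookkeeping you supply on variance and on why the hypothesis must give the \emph{inverse} image is a welcome elaboration of the same argument, not a different route.
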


\begin{proof}
Call the geometric morphism $\nu$, so that $\nu^* = i$ . Since $\nu^*$ is fully faithful, a standard mapping space argument shows that that $\nu_* \nu^*$ is equivalent to the identity functor via the unit morphism. This says the functors $\pi_*\pi^*$ and $\pi_* \nu_* \nu^* \pi^*$ are equivalent, which is exactly the desired shape equivalence.
\end{proof}

We end with the following lemma.

\begin{lemma}\label{overconvergentsheavesareconstantirred}
The $\infty$-category of hypercomplete overconvergent sheaves on an irreducible sober space is equivalent to $\sset$ via the global sections functor.
\end{lemma}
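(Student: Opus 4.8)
The plan is to realize the equivalence through the generic point. Since $X$ is irreducible and sober it has a unique generic point $\eta$, and irreducibility forces $\eta$ to lie in every nonempty open of $X$, so $\{\eta\}$ is the filtered intersection of all nonempty opens. Write $\eta\colon\{*\}\hookrightarrow X$ for the inclusion and $\tilde\eta$ for the induced geometric morphism of hypercomplete $\infty$-topoi, so that $\tilde\eta^{*}$ is the stalk at $\eta$ and, concretely, $\tilde\eta_{*}$ is the functor $A\mapsto\mathscr{P}_{A}$ with $\mathscr{P}_{A}(U)=A$ for $U\neq\emptyset$, $\mathscr{P}_{A}(\emptyset)=*$, and all transition maps the identity. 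The claim is that $\Gamma$ is an equivalence with inverse $\tilde\eta_{*}$, which on an irreducible space is just the constant-sheaf functor.

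First I would check that $\tilde\eta_{*}$ lands in $\EuScript{T}_{oc}$. The presheaf $\mathscr{P}_{A}$ satisfies hyperdescent: in an irreducible space any two nonempty opens meet, so for a (hyper)cover of a nonempty open, after discarding its empty members the \v{C}ech nerve of $\mathscr{P}_{A}$ is the constant cosimplicial space on $A$ along a contractible simplicial set, whose limit is $A$. It is hypercomplete, since $\tilde\eta_{*}$ preserves $n$-truncated objects and commutes with the Postnikov limit $A\simeq\lim_{n}\tau_{\leq n}A$, and a limit of truncated objects is hypercomplete; and its stalk at every point is $A$ with all generization maps equivalences, hence it is overconvergent. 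Finally $\Gamma(X,\tilde\eta_{*}A)=\mathscr{P}_{A}(X)=A$ naturally, so $\Gamma\circ\tilde\eta_{*}\simeq\operatorname{id}_{\sset}$.

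The substance is the converse: every $\mathscr{F}\in\EuScript{T}_{oc}$ is constant, i.e. the unit $\mathscr{F}\to\tilde\eta_{*}\tilde\eta^{*}\mathscr{F}$ is an equivalence. Unwinding, it suffices to show $\mathscr{F}(X)\to\mathscr{F}(U)$ is an equivalence for every nonempty open $U$, for then $\mathscr{F}$ restricted to the poset of nonempty opens---which has a greatest element $X$ and hence contractible nerve---sends every morphism to an equivalence, so it is the constant presheaf at $\mathscr{F}(X)$ and $\mathscr{F}\simeq\mathscr{P}_{\mathscr{F}(X)}=\tilde\eta_{*}\Gamma\mathscr{F}$. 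For a sheaf of sets this is elementary from overconvergence: for $s,s'\in\mathscr{F}(X)$ the agreement locus $\{x:s_{x}=s'_{x}\}$ is open and, each generization map being bijective, is also stable under generization and specialization, hence equals $X$ once it contains $\eta$; this gives injectivity of restriction, while transporting the germ at $\eta$ of a given $s\in\mathscr{F}(U)$ through the bijections $\mathscr{F}_{x}\cong\mathscr{F}_{\eta}$ produces compatible local sections gluing to a global section restricting to $s$, giving surjectivity. For general $\mathscr{F}$ I would bootstrap along the Postnikov tower, using that the homotopy sheaves of an overconvergent sheaf are again overconvergent (on the open where the base section lives, which is still irreducible sober with generic point $\eta$), that constant sheaves on an irreducible space have vanishing higher cohomology, and that $\mathscr{F}$ and $\tilde\eta_{*}\tilde\eta^{*}\mathscr{F}$ are hypercomplete so that an equivalence on all truncations is an equivalence. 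Combining the two composites then shows $\Gamma$ is an equivalence.

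The main obstacle is exactly this last promotion from the $1$-categorical assertion to the hypercomplete $\infty$-categorical one: one cannot simply test the unit on stalks, since $\hootopos(X)$ need not have enough points for an arbitrary sober space, so either the Postnikov-tower induction above, or---in the locally spectral situation that actually arises in the applications---an appeal to Deligne's completeness theorem to obtain enough points followed by the stalkwise argument, is needed to close the gap. Everything else is routine bookkeeping with the generic point.
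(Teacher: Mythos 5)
Your proposal follows the same basic strategy as the paper's proof: pass to the generic point $\eta$, identify the would-be constant sheaf with the pushforward $\eta_*\eta^*\mathscr{F}$ of the generic stalk, and use overconvergence to see that the unit $\mathscr{F}\to\eta_*\eta^*\mathscr{F}$ identifies stalks at every point; your verifications that $\eta_*$ lands in hypercomplete overconvergent sheaves and that $\Gamma\circ\eta_*\simeq\mathrm{id}$ are correct and are left implicit in the paper. Where you diverge is the concluding step, and here your diagnosis of the ``main obstacle'' is mistaken: for any sober space the underlying $1$-topos has enough points, truncations and homotopy sheaves commute with taking stalks, and a morphism of hypercomplete sheaves inducing isomorphisms on all homotopy sheaves is $\infty$-connective and hence an equivalence; so stalkwise equivalences between hypercomplete sheaves \emph{are} equivalences, no appeal to Deligne's completeness theorem or restriction to locally spectral spaces required. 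This one-line observation is exactly how the paper finishes. Your substitute --- proving that every restriction $\mathscr{F}(X)\to\mathscr{F}(U)$ is an equivalence by induction along the Postnikov tower --- is workable in principle, since its inputs are all true (homotopy sheaves of an overconvergent sheaf are overconvergent, overconvergent abelian sheaves on an irreducible space are flasque hence acyclic, and a map of hypercomplete sheaves that is an equivalence on all truncations is $\infty$-connective hence an equivalence), and your $0$-truncated argument via agreement loci and transport of germs through the generization bijections is fine; but as written the inductive step is only a sketch, and the obstruction-theoretic bookkeeping with twisted coefficients and $k$-invariants it requires is strictly more labor than the stalkwise conclusion that is in fact available.
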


\begin{proof}
Name the space $X$ and let $\mathscr{F}$ be an overconvergent sheaf of $\infty$-groupoids on $X$. An irreducible sober space has a generic point $x \in X$. We wish to show that $\mathscr{F}$ is equivalent to the constant sheaf associated with the space $\mathscr{F}_x$. Call the constant sheaf associated with the space $\mathscr{F}_x$ by $\mathscr{G}$.

We simply need to build a morphism $\mathscr{G} \to \mathscr{F}$ that induces an equivalence on the stalk at $x$. That both sheaves are overconvergent immediately implies that it is an equivalence at every other point. Hypercompleteness implies that a stalkwise equivalence is an equivalence.

The morphism is actually just the unit of the adjunction given by the continuous map $i : x \to X$, that is we can identify $i_* i^* \mathscr{F} = \mathscr{G}$. Note that for such an inclusion, both $i_*$ and $i^*$ preserve weak equivalences and so this models the adjunction of $\infty$-categories. Since all opens in an irreducible sober space are connected, we deduce that the stalks at $x$ of $\mathscr{F}$ and $\mathscr{G}$ are equivalent, and conclude the statement of the lemma.
\end{proof}

\section{Etale homotopy type of non-archimedean analytic spaces}

Let $K$ be a non-archimedean field, and $\mathcal{X}$ a non-archimedean analytic space over $K$. Then in \cite{berkovich1993etale} Berkovich develops an \'{e}tale theory and in \cite{berkovich1994vanishing} a quasi-\'etale theory for such geometric objects. We will distinguish the two and remark on the cases in which they are equivalent.

\begin{definition}
For such $\mathcal{X}$, put $\site \mathcal{X}_{\acute{e}t}$ for the small \'etale site of $\mathcal{X}$, put $\topos \mathcal{X}_{\acute{e}t}$ for the \'etale 1-topos of $X$, and $\hootopos \mathcal{X}_{\acute{e}t}$ for the hypercomplete \'etale $\infty$-topos. Similarly, we write $\site \mathcal{X}_{q\acute{e}t}$, $\topos \mathcal{X}_{q\acute{e}t}$, and $\hootopos \mathcal{X}_{q\acute{e}t}$ for the quasi-\'etale variants of the previous. We will now define several objects of interest, letting $\ell$ be a prime number.

\begin{enumerate}
\item We write $\ettoptype \mathcal{X}$, for the shape of the $\infty$-topos $\hootopos \mathcal{X}_{\acute{e}t}$, and write $\ettoptype^{\natural} \mathcal{X}$ for its protruncation, $\profettoptype \mathcal{X}$ for its profinite completion, $\profettoptype_{\notp} \mathcal{X}$ for its $\notp$-profinite completion, and $\profettoptype_\ell \mathcal{X}$ for its $\ell$-profinite completion, 

\item We write $\qettoptype \mathcal{X}$ for the shape of $\hootopos \mathcal{X}_{q\acute{e}t}$, and write $\qettoptype^{\natural} \mathcal{X}$ for its protruncation, $\profqettoptype \mathcal{X}$ for its profinite completion, $\profqettoptype_{\notp} \mathcal{X}$ for its $\notp$-profinite completion, and $\profqettoptype_\ell \mathcal{X}$ for its $\ell$-profinite completion, 

\end{enumerate}
\end{definition}

In section 3 of \cite{berkovich1994vanishing}, Berkovich defines a class of maps called quasi-\'etale. The definition of a quasi-\'etale map of non-archimedean analytic spaces is similar to the definition of the $G$-topology, in that its relation to the \'etale topology is similar to the relation of the $G$-topology to the hausdorff topology. In this theory, \'etale maps are open, whereas the affinoid domains are topologically compact and thus closed. For this definition of \'etale, the affinoid domains are \emph{not} necessarily \'etale. The quasi-\'etale allows these maps to also be coverings, slightly shrinking the overall topos without affecting the topological invariants.

\begin{conjecture}
Let $\mathcal{X}$ be a $K$-analytic space. The identity functor $\mu_\mathcal{X} : \site \mathcal{X}_{\acute{e}t} \rightarrow \site \mathcal{X}_{q\acute{e}t}$ which sends an \'etale map to itself in the quasi-\'etale site induces an equivalence on the protruncated shapes of the $\infty$-topoi.

\[  \mu_{\mathcal{X},!}: \qettoptype \mathcal{X} \overset{\sim}{\rightarrow} \ettoptype \mathcal{X} \]
\end{conjecture}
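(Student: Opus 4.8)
The plan is to show that the morphism of shapes induced by $\mu_\mathcal{X}$ is an equivalence by applying Proposition~\ref{shapeequivalence} (the protruncated case) to the geometric morphism of $1$-topoi $f:\topos\mathcal{X}_{\acute{e}t}\to\topos\mathcal{X}_{q\acute{e}t}$ that $\mu_\mathcal{X}$ induces, whose inverse image $f^{*}$ is simply restriction of a quasi-\'etale sheaf to the \'etale site. This is well posed because the topology that $\mathcal{X}_{q\acute{e}t}$ induces on the full subcategory $\mathcal{X}_{\acute{e}t}$ is exactly the \'etale topology --- a jointly surjective family of \'etale morphisms that happens to be a quasi-\'etale covering is already an \'etale covering. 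Before Proposition~\ref{shapeequivalence} applies one must also check that both $1$-topoi are locally connected: for the \'etale topos this is standard, \'etale morphisms being open and Berkovich spaces locally connected, and for the quasi-\'etale topos it follows since affinoid domains have finitely many connected components, so every quasi-\'etale covering admits a refinement by connected objects. It then remains to verify: (i) $f^{*}$ is a bijection on global sections of locally constant sheaves of sets; (ii) $f^{*}$ is a bijection on $\check H^{1}$ with constant group coefficients; and (iii) $f^{*}$ is an isomorphism on cohomology with coefficients in any locally constant abelian sheaf, in all degrees.

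For (i) and (ii) I would argue through the fundamental pro-groupoid, using the equivalence between locally constant sheaves of sets and local systems on the shape established in the Verdier-type Proposition, together with Corollary~\ref{locallyconstantsheaves}. The key geometric input is that a finite quasi-\'etale morphism is automatically finite \'etale: quasi-\'etale morphisms are flat and unramified (locally they are compositions of affinoid-domain embeddings with \'etale morphisms, and both are flat and unramified), and a finite, flat and unramified morphism is \'etale. Hence the categories of finite quasi-\'etale and finite \'etale covers of $\mathcal{X}$, and of every object over it, coincide, and --- the connected components being geometric and so also agreeing --- the \'etale and quasi-\'etale fundamental pro-groupoids are isomorphic. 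For finite coefficients this already settles (i) and (ii). To obtain the full statements one must promote this to an equivalence of the categories of \emph{all} locally constant sheaves on the two sites, compatible with $f^{*}$; this would go through the local structure theory of (quasi-)\'etale morphisms, showing that a sheaf locally constant for the quasi-\'etale topology is already locally constant for the \'etale topology, the extra gluing afforded by affinoid-domain coverings being absorbed by \'etale ones.

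For (iii) one invokes Berkovich's comparison of \'etale and quasi-\'etale cohomology from \cite{berkovich1994vanishing}: the higher direct images of the morphism relating the two sites vanish on torsion coefficients, which gives $H^{q}(\mathcal{X}_{q\acute{e}t},\mathscr{L})\cong H^{q}(\mathcal{X}_{\acute{e}t},f^{*}\mathscr{L})$ for locally constant torsion $\mathscr{L}$ --- one reduces the locally constant case to the constant one \'etale-locally by the $\pi_{1}$-comparison just obtained, and to a single degree by the usual spectral sequence. To treat an arbitrary locally constant abelian sheaf one writes it as the filtered colimit of its finitely generated subsheaves, uses that cohomology commutes with filtered colimits, and is left with the finitely generated --- and ultimately the $\mathbb{Z}$- and $\mathbb{Z}/n$-coefficient --- cases.

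The main obstacle is precisely this last step and, more broadly, everything the torsion and finite theory does not reach. Berkovich's vanishing theorem concerns \emph{torsion} coefficients, and the comparison of \'etale with quasi-\'etale cohomology in non-torsion coefficients such as $\mathbb{Z}$ or $\mathbb{Q}$ --- which the protruncated shape genuinely detects, in contrast to the profinite shape, where only finite coefficients and finite covers intervene --- does not follow from it and appears to be open. In the hausdorff strictly $K$-analytic case Theorem~\ref{comparison-berkovich-adic}, together with the comparison of Berkovich and adic \'etale topoi, identifies the two topoi outright and bypasses all of this, which is exactly why the profinite analogue is a theorem while the present protruncated statement is only conjectural. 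A natural alternative would be to bootstrap from affinoids --- an affinoid cover of $\mathcal{X}$ is at once an \'etale and a quasi-\'etale cover, so by the descent theorem (Theorem~\ref{descent-of-shape}) it would suffice to treat affinoid $\mathcal{X}$ --- but a general $K$-affinoid need not be strictly $K$-analytic, cannot be made so without enlarging $K$, and the behaviour of the protruncated (as opposed to profinite) shape under such a ground-field extension is itself not understood.
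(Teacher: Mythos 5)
This statement is a \emph{conjecture} in the paper: there is no proof to compare against, only the remark that follows it, recording how far the author can get and exactly where the argument breaks. Measured against that remark, your proposal locates the difficulty in the wrong place. The step you pass over as routine --- promoting the coincidence of finite quasi-\'etale and finite \'etale covers to an equivalence of the categories of \emph{all} locally constant sheaves and torsors on the two sites, with ``the extra gluing afforded by affinoid-domain coverings being absorbed by \'etale ones'' --- is precisely the open point. A $G$-torsor on the quasi-\'etale site is a sheaf-theoretic object: it is trivialized only on a quasi-\'etale cover, whose members are affinoid domain embeddings composed with \'etale maps, and it is not known to be representable by any quasi-\'etale space, let alone a finite one; so the (true) fact that a finite quasi-\'etale morphism is \'etale does not touch essential surjectivity. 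The paper's remark explains what is actually needed: given trivializing sections $s_i : U_i \to \mathscr{G}|_{U_i}$ over affinoids $U_i$ sitting inside \'etale $W_i \to \mathcal{X}$, one wants to spread each $s_i$ to an \'etale neighbourhood of $U_i$ in $W_i$, and this spreading argument requires a representability theorem for quasi-\'etale torsors which the author does not know. Note also that the protruncated criterion of Proposition \ref{shapeequivalence} demands $\check{H}^1$ agreement for \emph{arbitrary} groups $G$, not just finite ones, so even a complete theory of finite covers would not close the gap.

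Conversely, the obstacle you single out --- cohomology with non-torsion locally constant abelian coefficients --- is not the one the paper identifies: the remark applies Theorem 3.3(ii) of \cite{berkovich1994vanishing} with $f=\mathrm{id}_{\mathcal{X}}$ to obtain the cohomological comparison needed for the protruncated criterion, and Corollary 3.5 of the same paper for full faithfulness on locally constant sheaves, with no torsion restriction entering at that stage. What you do get right is the surrounding landscape: the statement is only established in the hausdorff strictly $K$-analytic case, and there it is obtained by identifying the quasi-\'etale topos with the \'etale topos of the associated adic space and passing through overconvergent sheaves (Theorem \ref{comparison-berkovich-adic} and Corollary \ref{comparison-qet-et}), not by a descent or finite-cover bootstrap; your closing observation that non-strict affinoids block an easy reduction is consistent with why the paper leaves the general case as a conjecture.
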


\begin{remark}
We can almost prove the equivalence of protruncated shapes, that is if we add $\natural$ to the shapes above.

We can apply Theorem 3.3 (ii) for $f = \mathrm{id}_\mathcal{X}$ of \cite{berkovich1994vanishing} to obtain the cohomological comparison we need. By Corollary 3.5 of \cite{berkovich1994vanishing}, we know that the category of locally constant sheaves of sets on $\mathcal{X}_{\acute{e}t}$ embeds fully faithfully into the category of locally constant sheaves of sets on $\mathcal{X}_{q\acute{e}t}$.

We simply need essential surjectivity. Let $\mathscr{G}$ be a quasi-\'etale $G$-torsor for some group $G$. Let $\{U_i' \to \mathcal{X}\}$ be a quasi-\'etale cover on which $\mathscr{G}$ is trivialized. Refine it to a cover $\{ U_i \to \mathcal{X} \}$ with each $U_i$ affinoid, and choose a factorization $U_i \to W_i \to \mathcal{X}$ so that the first map is an affinoid embedding and the second is \'etale. Then $\mathscr{G}$ is of course still trivialized on this new cover. Let it be trivialized by some family of sections we'll denote $s_i : U_i \to \mathscr{G}|_{U_i}$. If we knew the representability of quasi-\'etale torsors, we would be able to extend these sections to some neighborhood of each $U_i$ in $W_i$, see for example the proof of Theorem 3.3 in \cite{berkovich1994vanishing}. This would show that the space is \'etale locally a trivial $G$-torsor.

Unfortunately the author does not know of any such representability result for quasi-\'etale torsors. We will deduce this equivalence in a special case by comparing to the theory of adic spaces.
\end{remark}

There are other formalisms of non-archimedean analytic geometry with their corresponding classes of \'etale morphisms. We can obtain a quick comparison theorem with that of rigid analytic geometry in the case where $\mathcal{X}$ is paracompact hausdorff and strictly $K$-analytic.

\begin{theorem}
\label{comparison-berkovich-adic}
Let $\mathcal{X}$ be a hausdorff strictly $K$-analytic space over a complete non-archimedean field $K$, write $\mathcal{X}^{ad}$ for the corresponding adic space.

Then,
\begin{enumerate}
\item the quasi-\'etale 1-topos of $\mathcal{X}$ is equivalent to the \'etale 1-topos of $\mathcal{X}^{ad}$

\item the quasi-\'etale $\infty$-topos of $\mathcal{X}$ is equivalent to the \'etale $\infty$-topos of $\mathcal{X}^{ad}$,

\item and the hypercomplete quasi-\'etale $\infty$-topos of $\mathcal{X}$ is equivalent to the hypercomplete \'etale $\infty$-topos of $\mathcal{X}^{ad}$
\end{enumerate}
\end{theorem}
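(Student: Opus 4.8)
\emph{Plan of proof.} The strategy is to reduce all three assertions to one statement: the natural comparison functor from the quasi-\'etale site of $\mathcal{X}$ to the \'etale site of $\mathcal{X}^{ad}$ is an \emph{equivalence of sites}. Granting this, part (1) is immediate, since equivalent sites have equivalent categories of sheaves of sets; part (2) follows because equivalent sites have equivalent $\infty$-categories of sheaves of $\infty$-groupoids; and part (3) follows because hypercompletion is functorial, so it carries the equivalence of (2) to an equivalence of hypercomplete $\infty$-topoi. Thus no separate $\infty$-categorical argument is needed once the site comparison is in hand.

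To build the comparison functor, recall Huber's construction $\mathcal{Y} \mapsto \mathcal{Y}^{ad}$ from Hausdorff strictly $K$-analytic spaces to adic spaces, which sends an affinoid $\mathcal{M}(A)$ to $\operatorname{Spa}(A, A^{\circ})$ and globalizes (here the Hausdorff hypothesis is used). A quasi-\'etale morphism $\mathcal{Y} \to \mathcal{X}$ is, locally on $\mathcal{Y}$, a composite of an affinoid-domain embedding followed by an \'etale morphism. Under $(-)^{ad}$, \'etale morphisms of Berkovich spaces become \'etale morphisms of adic spaces, while an affinoid-domain embedding --- being, by Gerritzen--Grauert, locally a rational-domain embedding --- becomes an open immersion, because rational subsets are open in an adic spectrum. (This is precisely the phenomenon motivating the quasi-\'etale topology: the affinoid domains that Berkovich's \'etale topology cannot see as coverings, being topologically closed, become honest opens on the adic side.) Hence $\mathcal{Y}^{ad} \to \mathcal{X}^{ad}$ is \'etale, and $\mathcal{Y} \mapsto \mathcal{Y}^{ad}$ defines a continuous functor $\site \mathcal{X}_{q\acute{e}t} \to \site (\mathcal{X}^{ad})_{\acute{e}t}$.

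It remains to check this functor is an equivalence of sites. Full faithfulness follows from the full faithfulness of $(-)^{ad}$ on the relevant class of Berkovich spaces, due to Huber. For essential surjectivity and the matching of coverings one invokes the local structure of \'etale morphisms of adic spaces from \cite{huber2013etale} --- each is, locally on the source, an open immersion into a finite \'etale cover of a rational domain --- together with the comparison of finite \'etale covers (finite \'etale covers of $\operatorname{Spa}(A,A^{\circ})$, of $\mathcal{M}(A)$, and finite \'etale $A$-algebras all coincide) and of rational domains (rational subsets of $\operatorname{Spa}(A,A^{\circ})$ correspond to rational, hence affinoid, domains of $\mathcal{M}(A)$), and then glues. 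On the covering side the topology match reduces to the known fact that the topology of $\mathcal{X}^{ad}$ reproduces the $G$-topology of $\mathcal{X}$, so that open covers of $\mathcal{X}^{ad}$ correspond to affinoid-domain covers of $\mathcal{X}$, while finite \'etale surjections match directly.

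The main obstacle is this last step: descending an arbitrary \'etale adic $\mathcal{X}^{ad}$-space to a quasi-\'etale $\mathcal{X}$-space and identifying the two Grothendieck topologies, which is where all of Huber's and Berkovich's structural results must be combined; keeping track of non-separated \'etale morphisms, which occur in both formalisms, is a further, milder nuisance. This is the part of the argument streamlined by David Hansen's remark.
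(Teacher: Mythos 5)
Your overall route is the paper's: factor a quasi-\'etale map with affinoid source as an affinoid-domain embedding followed by an \'etale morphism, pass to adic spaces where both factors become \'etale, and then come back using Huber's local structure theorem for \'etale morphisms of adic spaces (Theorem 2.2.8 of \cite{huber2013etale}: locally an open immersion composed with a finite \'etale map), the identification of finite \'etale covers on both sides, and the comparison of the adic topology with the $G$-topology of $\mathcal{X}$ (Theorem 1.6.2 of \cite{berkovich1993etale}). This is exactly the skeleton of the paper's argument.

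The one genuine problem is your intermediate claim that the comparison functor is an \emph{equivalence of sites}, justified by ``essential surjectivity \ldots and then glues.'' Essential surjectivity fails: $(-)^{ad}$ on hausdorff strictly $K$-analytic spaces lands in taut adic spaces, so a general \'etale adic space over $\mathcal{X}^{ad}$ (for instance a non-taut one obtained by gluing) is not of the form $\mathcal{U}^{ad}$ for any quasi-\'etale $\mathcal{U} \to \mathcal{X}$; and even for an affinoid source, the open-immersion factor produced by Theorem 2.2.8 need not correspond to a single affinoid domain of $\mathcal{X}$, only be refinable by finitely many such. What your local-structure argument actually delivers --- and what the paper proves --- is that the morphisms coming from the quasi-\'etale site form a \emph{dense} (generating) subcategory of the \'etale site of $\mathcal{X}^{ad}$ with matching coverings, after which the equivalence of topoi follows from the comparison lemma, not from an equivalence of categories of sites. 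With that correction your reductions for (2) and (3) still go through: either run the comparison-lemma argument at the level of sheaves of $\infty$-groupoids, or observe that the sheaf $\infty$-topoi on these $1$-sites are $1$-localic, so the equivalence of $1$-topoi already determines (2), and functoriality of hypercompletion gives (3). (A minor side point: Hansen's MathOverflow remark is used in the Section 3 cohomological comparison, not in the proof of this theorem.)
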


\begin{remark}
The \'etale \emph{site} of $\mathcal{X}$ is equivalent to the partially proper \'etale site of $\mathcal{X}^{ad}$. We see this by combining Proposition 8.3.4 of \cite{huber2013etale}, which identifies the partially proper \'etale site of the corresponding rigid space $\mathcal{X}^{rig}$ with the \'etale site of $\mathcal{X}$, and the discussion in 8.2.11 of \cite{huber2013etale} which identifies the categories and site-theoretic coverages of partially proper \'etale rigid morphisms to $\mathcal{X}^{rig}$ to those of $\mathcal{X}^{ad}$.

We use the same ideas along with the arguments from section 8.1 of \cite{huber2013etale} to prove this theorem.
\end{remark}

\begin{proof}
The quasi-\'etale topos is equivalent to the one generated by quasi-\'etale morphisms whose source is affinoid. We use that $\mathcal{X}$ is hausdorff to have fiber products in the subcategory. Such a quasi-\'etale map $q : U \to \mathcal{X}$ will factor into $q = e \circ j$ where $e$ is \'etale and $j$ is an affinoid embedding. Both $e^{ad}$ and $j^{ad}$ will be \'etale as morphisms of adic spaces, and $e^{ad}$ is also partially proper. The composition in general is merely \'etale and not partially proper and \'etale.

We will show that such morphisms are dense in the category of \'etale maps to $\mathcal{X}^{ad}$. Let $f : Y \rightarrow \mathcal{X}^{ad}$ be an \'etale map, and we may assume without loss of generality that $Y = \spa A$ is affinoid and its image is contained in a partially proper affinoid of $\mathcal{X}^{ad}$ as such morphisms generate a cofinal system of coverings. Theorem 2.2.8 of \cite{huber2013etale} gives a local compactification $f = \overline{f} \circ j$ where $\overline{f}$ is a finite \'etale map and $j$ is an open embedding. Replace our affinoid cover by the local and compactifiable one. The finite \'etale map of course comes from a finite \'etale extension of the corresponding analytic domain in $\mathcal{X}$, and the open embedding may not directly come from a affinoid in $\mathcal{X}^{rig}$, however it does refine to a collection of affinoids in $\mathcal{X}^{rig}$ whose union is $\spa A$. To relate this back to $\mathcal{X}$ itself, we apply Theorem 1.6.2 of \cite{berkovich1993etale}.
\end{proof}

The reader should beware that the above gives a functorial equivalence of topoi, however there are subtleties to when a morphism $f: \mathcal{X} \rightarrow \mathcal{Y}$ induces an equivalence of functors between $\infty$-topoi $\theta_{\mathcal{Y}}^*f_* \simeq f^{ad}_* \theta_{\mathcal{X}}^*$ see Theorem 8.3.5 of \cite{huber2013etale} for some criteria for abelian sheaves.

\begin{corollary}\label{comparison-qet-et}
The quasi-\'etale homotopy type of $\mathcal{X}$ is equivalent to the \'etale homotopy type of $\mathcal{X}$ at least when $\mathcal{X}$ is hausdorff and strictly $K$-analytic.
\end{corollary}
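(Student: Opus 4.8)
The plan is to transport both shapes to the adic space $\mathcal{X}^{ad}$ and to exploit overconvergence there. Unwinding definitions, $\qettoptype\mathcal{X} = \shape(\hootopos\mathcal{X}_{q\acute{e}t})$ and $\ettoptype\mathcal{X} = \shape(\hootopos\mathcal{X}_{\acute{e}t})$. Theorem \ref{comparison-berkovich-adic}(3) gives $\hootopos\mathcal{X}_{q\acute{e}t}\simeq \hootopos(\mathcal{X}^{ad})_{\acute{e}t}$, hence $\qettoptype\mathcal{X}\simeq\shape(\hootopos(\mathcal{X}^{ad})_{\acute{e}t})$; and, since the hypercomplete $\infty$-topos is functorial in the site, the remark following that theorem (the \'etale site of $\mathcal{X}$ is the partially proper \'etale site of $\mathcal{X}^{ad}$) gives $\ettoptype\mathcal{X}\simeq\shape(\hootopos(\mathcal{X}^{ad})_{p\acute{e}t})$, where $p\acute{e}t$ denotes the partially proper \'etale site. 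So the statement reduces to showing that the \'etale and partially proper \'etale hypercomplete $\infty$-topoi of the single adic space $\mathcal{X}^{ad}$ have equivalent shapes.

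For that I would reprove Corollary \ref{overconvergentcorollary} in the \'etale setting. The underlying space of $\mathcal{X}^{ad}$ is locally spectral, so by Lemma \ref{lcsareoverconvergent} locally constant sheaves are overconvergent, and, more to the point, Huber's overconvergence comparison (sections 8.2--8.3 of \cite{huber2013etale}) identifies partially proper \'etale sheaves on $\mathcal{X}^{ad}$ with overconvergent \'etale sheaves; I would upgrade this to an equivalence $\hootopos(\mathcal{X}^{ad})_{p\acute{e}t}\simeq\EuScript{T}_{oc}$, where $\EuScript{T}_{oc}\subset\hootopos(\mathcal{X}^{ad})_{\acute{e}t}$ is the full subcategory of hypercomplete overconvergent sheaves of $\infty$-groupoids, using that overconvergence is a condition on stalks along specializations and that the \'etale topos of $\mathcal{X}^{ad}$ has enough points. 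Since stalks preserve finite limits and all colimits and since a (co)limit of equivalences is an equivalence, $\EuScript{T}_{oc}$ is stable under finite limits and closed under colimits in $\hootopos(\mathcal{X}^{ad})_{\acute{e}t}$, so the inclusion is the inverse image $\nu^*$ of a geometric morphism $\nu\colon\hootopos(\mathcal{X}^{ad})_{\acute{e}t}\to\hootopos(\mathcal{X}^{ad})_{p\acute{e}t}$. Exactly as in the proof of Corollary \ref{overconvergentcorollary}, full faithfulness of $\nu^*$ makes the unit $\id\to\nu_*\nu^*$ an equivalence; feeding this into the factorization of the terminal geometric morphism of $\hootopos(\mathcal{X}^{ad})_{\acute{e}t}$ through $\nu$ shows that its shape functor $\pi_*\pi^*$ agrees with that of $\hootopos(\mathcal{X}^{ad})_{p\acute{e}t}$, hence $\shape(\hootopos(\mathcal{X}^{ad})_{\acute{e}t})\simeq\shape(\hootopos(\mathcal{X}^{ad})_{p\acute{e}t})$. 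Chaining the three equivalences gives $\qettoptype\mathcal{X}\simeq\ettoptype\mathcal{X}$.

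The main obstacle is the middle step: lifting Huber's $1$-categorical identification of partially proper \'etale sheaves with overconvergent \'etale sheaves to the hypercomplete $\infty$-level, so that $\EuScript{T}_{oc}$ is genuinely an $\infty$-topos and the inclusion into all \'etale $\infty$-sheaves is an inverse-image functor; one should also check that the tautness-type hypotheses under which Huber's comparison holds are satisfied once $\mathcal{X}$ is hausdorff and strictly $K$-analytic, which is the same regime as in Theorem \ref{comparison-berkovich-adic}. If one is content with the protruncated or $\notp$-profinite versions of the corollary (which is all that is used later), this topos-theoretic work can be bypassed: Huber's cohomological overconvergence comparison together with Lemma \ref{lcsareoverconvergent} verifies the hypotheses of Proposition \ref{shapeequivalence} (respectively Proposition \ref{notpprofiniteshapeequivalence}) directly.
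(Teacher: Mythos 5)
Your overall route is the same one the paper takes (and it matches the remark that follows the corollary almost verbatim): transport the quasi-\'etale side to the \'etale $\infty$-topos of $\mathcal{X}^{ad}$ via Theorem \ref{comparison-berkovich-adic}, identify the \'etale side of $\mathcal{X}$ with the partially proper \'etale theory on $\mathcal{X}^{ad}$, and conclude by showing the overconvergent inclusion induces a shape equivalence. The problem is that the step you yourself flag as ``the main obstacle'' is exactly the only place where the paper's proof has any content, and your proposal does not supply the ingredient that closes it. The paper's argument is: in Huber's proof of Theorem 8.2.6.i.a of \cite{huber2013etale}, statements (I)--(V) are purely site-theoretic and survive untouched, so the sole statement that interacts with the $\infty$-categorical machinery is (VI) --- acyclicity of overconvergent sheaves on an irreducible space --- and its hypercomplete $\infty$-categorical replacement is Lemma \ref{overconvergentsheavesareconstantirred} (hypercomplete overconvergent sheaves on an irreducible sober space are constant, global sections being an equivalence with $\sset$). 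That is the missing idea: without it, the identification of the hypercomplete partially proper \'etale $\infty$-topos with the hypercomplete overconvergent subcategory, and hence the shape comparison, does not get off the ground.

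Two smaller points in the same direction. First, your claim that closure of $\EuScript{T}_{oc}$ under colimits and finite limits makes the inclusion an inverse image functor, so that Corollary \ref{overconvergentcorollary} applies, does not discharge the work: that corollary is stated conditionally (its hypotheses --- that $\EuScript{T}_{oc}$ is an $\infty$-topos and that the inclusion is the inverse image of a geometric morphism --- are assumed, not proved), and even granting them you would still need the substantive identification $\hootopos(\mathcal{X}^{ad})_{\acute{e}t.p.p.}\simeq\EuScript{T}_{oc}$, which is precisely where the stalkwise analysis on the irreducible pieces $Z(\overline{z})$, i.e. Lemma \ref{overconvergentsheavesareconstantirred}, enters; formal closure properties alone cannot produce it. Second, your fallback via Propositions \ref{shapeequivalence} and \ref{notpprofiniteshapeequivalence} is reasonable and is indeed all that the later sections use (as the paper itself notes), but note that the torsor/locally-constant comparison it needs is again only available through the adic identification (essential surjectivity fails to be known purely on the Berkovich side, which is why the paper's earlier statement is only a conjecture), and in any case it proves a weaker, protruncated statement than the corollary as written.
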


\begin{proof}
We already know that the \'etale 1-topos is the subcategory of overconvergent sheaves in the quasi-\'etale 1-topos. We need to argue that the hypercomplete \'etale $\infty$-topos is the sub-$\infty$-category of hypercomplete overconvergent sheaves in the quasi-\'etale $\infty$-topos

Examining the proof of Theorem 8.2.6.i.a of \cite{huber2013etale}, we really only need the statements (I) through (VI). In fact (I) through (V) are geometric in nature, and happen on the level of sites. This means that we really only need to extend statement (VI), which says that overconvergent sheaves on an irreducible topological space are acyclic. The rest of Huber's argument is about understanding the underlying site, and does not directly interact with the $\infty$-categorical machinery. We simply apply Lemma \ref{overconvergentsheavesareconstantirred}, and conclude the corollary.
\end{proof}

\begin{remark}
In fact, roughly speaking Huber's argument can be translated into the following. Write $X$ for an analytic adic space.

Both the \'etale and the \'etale-partially-proper hypercomplete $\infty$-topoi satisfy descent for \'etale-partially-proper hypercovers, so we deduce that as `cosheaves of $\infty$-topoi' on the hypercomplete \'etale-partially-proper site that
\[ \hootopos X_{\acute{e}t} \simeq \colim_{U_\bullet \in HR(X_{\acute{e}t.p.p.})} \hootopos U_\bullet^{\acute{e}t} \simeq \colim_{U_\bullet \in HR(X_{\acute{e}t.p.p.})} \hootopos U_\bullet \]
The first equivalence is essentially a tautology. The second equivalence can be checked stalkwise, once we know that the stalks are what Huber calls $Z(\overline{z})$ in for example statement (V) of the proof of Theorem 8.2.6.i.a of \cite{huber2013etale}, and that the \'etale topos of such $Z(\overline{z})$ is equivalent to its zariski topos.

Similarly, we have
\[ \hootopos X_{\acute{e}t.p.p.} \simeq \colim_{U_\bullet \in HR(X_{\acute{e}t.p.p.})} \hootopos U_\bullet^{\acute{e}t.p.p.} \simeq \colim_{U_\bullet \in HR(X_{\acute{e}t.p.p.})} \hootopos U_\bullet^{p.p.} \]
We have the `obvious' map
\[ \colim_{U_\bullet \in HR(X_{\acute{e}t.p.p.})} \hootopos U_\bullet\to \colim_{U_\bullet \in HR(X_{\acute{e}t.p.p.})} \hootopos U_\bullet^{p.p.} \]
We can take the stalk of both objects which is
\[ \Big( \colim_{U_\bullet \in HR(X_{\acute{e}t.p.p.})} \hootopos U_\bullet \Big)_x \to \Big(\colim_{U_\bullet \in HR(X_{\acute{e}t.p.p.})} \hootopos U_\bullet^{p.p.}\Big)_x \]
Since taking stalks commutes with colimits the above is really just
\[ \hootopos Z(\overline{x}) \simeq \colim_{x \in \operatorname{im} U} \hootopos U \to \colim_{x \in \operatorname{im} U} \hootopos U_{p.p.} \simeq \hootopos Z(\overline{x})_{p.p.} \]
The shape functor commutes with colimits, and so we can functorially identify the stalks at $\overline{x}$ of the morphism of sheaves
\[ \shape \hootopos X_{\acute{e}t} \rightarrow \shape \hootopos X_{\acute{e}t.p.p.}\]
as being
\[  \shape \hootopos Z(\overline{x}) \rightarrow \shape \hootopos Z(\overline{x})_{p.p.}\]
And then we conclude that the morphism is an equivalence on stalks by Corollary \ref{overconvergentcorollary}.

It seems plausible that the above argument could be reworked using reified valuative spectra of \cite{kedlaya-reified} to conclude the above for arbitrary hausdorff $K$-analytic spaces. Since this is outside of the scope of this document, we set aside the problem for future work. For the next section, we only need the equivalence of protruncated shapes between the quasi-\'etale and the \'etale topoi for strictly $K$-analytic varieties.
\end{remark}

As a tool to give counter-examples later, we now characterize the rational \'etale cohomology of a non-archimedean analytic space. The following proposition is presumably well known, and appears as a comment in \cite{berkovichoneformsbook}. We give the easy proof for completeness.

\begin{proposition}\label{rationalchangeoftopssdegen}
For a non-archimedean analytic space $\mathcal{X}$, we have a natural isomorphism

\[ H^i(\mathcal{X}_{q\acute{e}t},\mathbb{Q}) \cong H^i(\mathcal{X}_{\acute{e}t},\mathbb{Q}) \cong H^i(|\mathcal{X}|,\mathbb{Q}) \]
\end{proposition}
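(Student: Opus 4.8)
The plan is to compare everything with the underlying topological space $|\mathcal{X}|$. Since every open immersion is \'etale, and \'etale implies quasi-\'etale, the functor sending an open of $|\mathcal{X}|$ to the corresponding open immersion is a continuous morphism of sites into both $\site\mathcal{X}_{\acute{e}t}$ and $\site\mathcal{X}_{q\acute{e}t}$; this produces geometric morphisms $\epsilon_{\acute{e}t}\colon\topos\mathcal{X}_{\acute{e}t}\to\topos|\mathcal{X}|$ and $\epsilon_{q\acute{e}t}\colon\topos\mathcal{X}_{q\acute{e}t}\to\topos|\mathcal{X}|$, compatible with the change-of-topology morphism $\topos\mathcal{X}_{\acute{e}t}\to\topos\mathcal{X}_{q\acute{e}t}$. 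It is then enough to prove, for $\star\in\{\acute{e}t,q\acute{e}t\}$, that the unit $\mathbb{Q}\to R\epsilon_{\star,*}\,\epsilon_\star^{*}\mathbb{Q}$ is an isomorphism in the derived category of abelian sheaves on $|\mathcal{X}|$: granting this, the Leray spectral sequence $H^p(|\mathcal{X}|,R^q\epsilon_{\star,*}\mathbb{Q})\Rightarrow H^{p+q}(\mathcal{X}_\star,\mathbb{Q})$ collapses and gives $H^i(\mathcal{X}_\star,\mathbb{Q})\cong H^i(|\mathcal{X}|,\mathbb{Q})$ for both $\star$, and the remaining isomorphism $H^i(\mathcal{X}_{q\acute{e}t},\mathbb{Q})\cong H^i(\mathcal{X}_{\acute{e}t},\mathbb{Q})$ follows by composing. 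The degree-zero statement $\epsilon_{\star,*}\epsilon_\star^{*}\mathbb{Q}=\mathbb{Q}$ is immediate, as $|\mathcal{X}|$ is locally connected and the (quasi-)\'etale site is locally connected over it; alternatively it is an instance of Lemma~\ref{lcsareoverconvergent}, a constant sheaf being overconvergent.

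The content is the vanishing $R^q\epsilon_{\star,*}\mathbb{Q}=0$ for $q>0$, which I would check on stalks. The stalk at a point $x\in|\mathcal{X}|$ is the filtered colimit $\colim_{U\ni x}H^q(U_\star,\mathbb{Q})$ over analytic-domain neighbourhoods $U$ of $x$, and we may restrict to a cofinal system of quasi-compact such $U$. I would reduce from rational to torsion coefficients using the exact sequence $0\to\mathbb{Z}\to\mathbb{Q}\to\mathbb{Q}/\mathbb{Z}\to0$ together with the observation that a $\mathbb{Q}$-vector space receiving a map from a torsion group and mapping into a torsion group must vanish: one is left to see that $\colim_U H^q(U_\star,\mathbb{Z})$ and $\colim_U H^q(U_\star,\mathbb{Q}/\mathbb{Z})$ are torsion for $q>0$. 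The $\mathbb{Q}/\mathbb{Z}$ colimit equals $\colim_{n,U}H^q(U_\star,\mathbb{Z}/n)$, since (quasi-)\'etale cohomology of a quasi-compact analytic space commutes with filtered colimits of torsion sheaves, and this is torsion; and here I would invoke Berkovich's description of the local (quasi-)\'etale cohomology at a point from \cite{berkovich1993etale} and \cite{berkovich1994vanishing}, to the effect that as $U$ shrinks to $x$ the topology of $U$ becomes inessential and $\colim_U H^q(U_\star,\mathbb{Z}/n)$ is governed by the Galois cohomology of the complete residue field $\mathcal{H}(x)$ — which forces $\colim_U H^q(U_\star,\mathbb{Z})$ to be torsion, since the Galois group of $\mathcal{H}(x)$ is profinite and hence has vanishing $\mathbb{Q}$-cohomology in positive degrees. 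This yields $R^q\epsilon_{\star,*}\mathbb{Q}=0$ and completes the argument.

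I expect the main obstacle to be exactly this stalk computation: reducing $\colim_{U\ni x}H^q(U_\star,-)$ to a Galois-cohomological object in the generality the statement asks for (arbitrary, not necessarily good, $K$-analytic spaces, and points of every type), and cleanly separating the genuinely topological skeleton/value-group contribution — which is \emph{meant} to survive, and does so through $\epsilon_{\star,*}$ into $H^i(|\mathcal{X}|,\mathbb{Q})$ — from the arithmetic contribution, which is $\mathbb{Q}$-torsion. If one prefers to avoid Berkovich's local input, there is an alternative route: $\mathbb{Q}$ is a constant, hence locally constant, sheaf, so by the Proposition relating the shape to locally constant sheaf cohomology one has $H^i(\mathcal{X}_\star,\mathbb{Q})\cong H^i(\shape^{\star}\mathcal{X},\mathbb{Q})$, reducing the whole statement to showing that the canonical maps $\ettoptype\mathcal{X}\to\shape|\mathcal{X}|$ and $\qettoptype\mathcal{X}\to\shape|\mathcal{X}|$ are $\mathbb{Q}$-cohomology equivalences of pro-spaces — which holds because the discrepancy between the (quasi-)\'etale homotopy type and the topological shape is built from profinite pieces, and $B\widehat{G}$ and $K(\widehat{\mathbb{Z}}_\ell,n)$ are rationally acyclic in positive degrees.
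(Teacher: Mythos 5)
Your argument is essentially the paper's: both push down to $|\mathcal{X}|$ along the morphism of sites $\pi:\mathcal{X}\to|\mathcal{X}|$, apply the Leray spectral sequence, and kill the higher direct images of $\mathbb{Q}$ stalkwise by identifying the stalk at $x$ with continuous cohomology of the profinite group $\operatorname{Gal}\mathscr{H}(x)$, which vanishes rationally in positive degrees. The paper simply quotes Berkovich's Galois-cohomological description of $(R^p\pi_*\mathbb{Q})_x$ with $\mathbb{Q}$-coefficients directly, so your detour through $0\to\mathbb{Z}\to\mathbb{Q}\to\mathbb{Q}/\mathbb{Z}\to 0$ and torsion coefficients is not needed, and the quasi-\'etale/\'etale comparison you treat by hand can likewise be quoted from Theorem 3.3 of \cite{berkovich1994vanishing}.
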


\begin{proof}
Recall, following the notation of Section 4.2 of \cite{berkovich1993etale} that we have a morphism of sites $\pi : \mathcal{X} \rightarrow |\mathcal{X}|$. This leads to a spectral sequence
\[ E_2^{p,q} = H^q(|\mathcal{X}|,R^p\pi_* \mathbb{Q})\]
Now we must examine the sheaves $R^p\pi_* \mathbb{Q}$. Consider the stalk at a point $x \in |\mathcal{X}|$ and the stalk's description as Galois cohomology from the same section of \cite{berkovich1993etale},
\[ (R^p\pi_* \mathbb{Q})_x = H^p(\operatorname{Gal}\mathscr{H}(x), \mathbb{Q}) \]
But the right hand side is trivial for $p >0$. Thus the above spectral sequence degenerates at $E_2$ and we get the desired isomorphism.
\end{proof}

Now let $K$ be a complete non-archimedean field, with non-trivial valuation, and let $V$ be a variety over $K$, \emph{i.e.} an integral scheme of finite type over $K$. The variety $V$ has an \'etale homotopy type, and we have just defined an \'etale homotopy type for the analytification $V^{an}$. We can immediately deduce some comparison theorems from the literature.

Recall Theorem 3.1 of \cite{berkovich1995comparison}, which we reproduce in our notation below.

\begin{theorem}[\cite{berkovich1995comparison}]
Let $\phi : Y \rightarrow X$ be a morphism of finite type between schemes which are locally of finite type over $K$, and let $\mathscr{F}$ be a constructible abelian sheaf on $Y$ of torsion groups whose orders are coprime to the characteristic of $K$. Then for any $q \geq 0$ there is a canonical isomorphism

\[ (R^q \phi_* \mathscr{F})^{an} \overset{\sim}{\rightarrow} R^q\phi_*^{an}\mathscr{F}^{an} \]
\end{theorem}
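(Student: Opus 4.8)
The plan is to deduce this from the absolute comparison theorem for proper schemes over a complete non-archimedean field together with base-change theorems on both sides, after reducing the relative statement to two elementary cases by a compactification-and-dévissage argument.

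\textbf{Reductions.} The claim is local on $X$ for the Berkovich topology: a map of étale sheaves on $X^{an}$ is an isomorphism precisely when it induces isomorphisms on all stalks, and the stalk of an analytification at a point $x \in X^{an}$ only involves the complete residue field $\mathscr{H}(x)$. Moreover $(-)^{an}$ is compatible with composition of morphisms and with the formation of $R\phi_*$, so a Leray spectral sequence argument reduces the theorem to the case where $\phi$ lies in any class of morphisms generating all finite-type morphisms under composition. By Nagata's compactification theorem we may thus write $\phi = \bar{\phi} \circ j$ with $j$ an open immersion and $\bar{\phi}$ proper, and it suffices to prove the theorem separately for open immersions and for proper morphisms. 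Finally, a Noetherian induction on $Y$ — using that every constructible torsion abelian sheaf on $Y$ with torsion orders prime to $\mathrm{char}\,K$ is an iterated extension of sheaves of the form $g_!\,\underline{\mathbb{Z}/n}$, where $g$ is a locally closed immersion into $Y$ and $n$ is prime to $\mathrm{char}\,K$ — reduces us to $\mathscr{F} = \underline{\mathbb{Z}/n}$.

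\textbf{Proper case.} For $\bar{\phi}$ proper, proper base change on the algebraic side and on the Berkovich-analytic side identify the stalks of $R^q \bar{\phi}_* \mathscr{F}$ and $R^q \bar{\phi}^{an}_* \mathscr{F}^{an}$ at a point of $X$, respectively $X^{an}$, with the étale cohomology of the (proper) fibre over the appropriate residue field. The desired isomorphism then follows from the absolute comparison theorem for cohomology of proper schemes over a complete non-archimedean field, which is already available (it follows in turn from Artin's comparison theorem and non-archimedean GAGA). One must check that the proper base change maps are compatible with analytification; this is a formal verification from the universal properties involved.

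\textbf{Open immersion case.} Let $j : Y \hookrightarrow X$ be an open immersion with closed complement $Z$. Here $R^q j_* \mathscr{F}$ is governed by local cohomology along $Z$: its stalk at a point lying over $Z$ is the cohomology of a ``punctured tubular neighbourhood'' around that point, in the algebraic and in the analytic worlds respectively. Using de Jong's alteration theorem (or resolution of singularities where available) together with the usual dévissage, one reduces to the case where $X$ is smooth and $Z$ is a divisor with normal crossings, so that étale-locally the pair $(X,Z)$ is modelled on $(\mathbb{A}^d_K, \{t_1 \cdots t_k = 0\})$. A Künneth decomposition then reduces the computation to the single key comparison for the open immersion $\mathbb{G}_m \hookrightarrow \mathbb{A}^1$ with Kummer coefficients $\underline{\mathbb{Z}/n}$ — whose analytic counterpart is a punctured disc — which is an explicit calculation on both sides. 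Matching these computations, compatibly with the restriction maps that assemble the local cohomology sheaves, yields the isomorphism.

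\textbf{Main obstacle.} The substantive part is the open-immersion case: one must control the étale cohomology of the analytic tubes and links around boundary strata and verify that the dévissage to a normal-crossings (or semistable) model is compatible with analytification. This is where genuinely non-archimedean input — Berkovich's analysis of neighbourhoods of boundary strata and the local structure of analytic spaces, together with de Jong's alterations — is needed; the proper case and all the reductions amount to a formal transport of the scheme-theoretic six-functor formalism through $(-)^{an}$.
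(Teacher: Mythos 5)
This statement is not proved in the paper at all: it is Theorem 3.1 of Berkovich's comparison paper \cite{berkovich1995comparison}, reproduced verbatim as an input for Theorem \ref{artin-comparison-ell}. So there is no in-paper argument to compare with; what you have written is a proposed proof of Berkovich's theorem itself, and as such it has genuine gaps. The most immediate one is the proper case: it does not ``follow from Artin's comparison theorem and non-archimedean GAGA''. Artin's theorem is an archimedean statement, and GAGA identifies coherent cohomology and finite \'etale covers, not \'etale cohomology of torsion sheaves; over a non-archimedean field the comparison for proper morphisms (equivalently, the compact-support comparison) is itself a substantial theorem of \cite{berkovich1993etale}, proved via formal models and vanishing cycles, and has to be taken as input rather than rederived in this way.

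The more serious issue is the open-immersion case, which you correctly identify as the crux but then understate. The hypothesis is only that the torsion orders are prime to $\mathrm{char}\,K$, so $n$ may be divisible by the residue characteristic $p$; in that range the ``explicit calculation'' for $\mathbb{G}_m \subset \mathbb{A}^1$ versus a punctured disc is not elementary (wild Kummer $p$-covers of annuli make the analytic cohomology of small neighbourhoods large at each finite level, and one must control the limit over shrinking neighbourhoods at \emph{all} points of the boundary in $X^{an}$, including points of types 2, 3, 4, not just rigid ones). Moreover the auxiliary tools your d\'evissage silently uses --- analytic smooth base change and K\"unneth for the \'etale-local reduction to the normal-crossings model, and descent of the comparison along a de Jong alteration, which is a proper generically finite cover rather than an \'etale-local modification and so requires cohomological descent plus a trace argument --- are precisely the statements that are missing or much harder in the analytic category for coefficients divisible by $p$. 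This is why Berkovich's actual argument takes a different route: it works stalkwise with germs of analytic spaces and rests on a comparison between the \'etale cohomology of affinoid spaces and germs and that of the corresponding affine schemes $\spec A$, combined with the already established compact-support comparison; no alteration or normal-crossings machinery enters. As written, your sketch would need those missing analytic ingredients supplied before it could count as a proof.
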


In the non-proper case, we must pass to the $\notp$-profinitely completed homotopy type.

\begin{theorem}\label{artin-comparison-ell}
Let $p$ be the characteristic of $K$. If $p>0$, then the $\notp$-profinitely completed \'etale homotopy type of the variety $V$ is equivalent to the $\notp$-profinitely completed \'etale homotopy type of its analytification $V^{an}$.
\[ \profettoptype_\notp V \simeq \profettoptype_\notp V^{an} \]

If $p = 0$ or $V$ is proper, then it is true for the profinitely completed \'etale homotopy types.

\[ \profettoptype V \simeq \profettoptype V^{an} \]
\end{theorem}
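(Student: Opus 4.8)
The plan is to reduce the statement to the cohomological and $\pi_1$-criteria for shape equivalences proved earlier (Propositions~\ref{shapeequivalence} and~\ref{notpprofiniteshapeequivalence}), applied to a morphism of sites relating $V^{an}$ to $V$. First I would set up the comparison morphism: analytification gives a morphism of sites $\site V^{an}_{\acute{e}t} \to \site V_{\acute{e}t}$ (equivalently a geometric morphism of the associated $\infty$-topoi), so that there is a natural map $\ettoptype V^{an} \to \ettoptype V$ of pro-spaces, hence a natural map on any of the localized shapes. The goal is to show this map becomes an equivalence after $\notp$-profinite (resp. profinite) completion. By Proposition~\ref{notpprofiniteshapeequivalence} it suffices to check: (i) locally constant sheaves of finite sets have the same $H^0$; (ii) Čech $H^1$ agrees for constant sheaves with finite structure group of order prime to $p$; and (iii) for each $\ell\neq p$ and each locally constant sheaf $\mathscr{L}$ of finite abelian $\ell$-groups, $H^q(V_{\acute{e}t},\mathscr{L}) \xrightarrow{\sim} H^q(V^{an}_{\acute{e}t},\mathscr{L}^{an})$ for all $q$.

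The main input for (iii) is the Artin comparison theorem of Berkovich reproduced just above: applying it to the structure morphism $V \to \spec K$ (or rather, to reduce to the absolute case, one takes $\phi$ a finite-type map and uses the Leray spectral sequence, but here we just need $\phi = \id$ composed with the analytification functor's compatibility with $R\phi_*$) yields $H^q(V_{\acute{e}t},\mathscr{L}) \cong H^q(V^{an}_{\acute{e}t},\mathscr{L}^{an})$ for constructible torsion sheaves of order prime to $\mathrm{char}\, K$; locally constant sheaves of finite abelian $\ell$-groups with $\ell\neq p$ are such. Here I should be slightly careful: the cited theorem compares higher direct images over a base, so to extract an absolute statement over $K$ I would either invoke the known absolute Artin comparison for rigid/Berkovich analytifications (which is the $q\geq 0$ content for $X = \spec K$ after base change to an algebraically closed completion, together with descent along $\mathrm{Gal}(\bar K/K)$), or cite the relative statement with $Y=V$, $X=\spec K$ and take global sections. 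For (i), $H^0$ with locally constant finite coefficients detects the set of connected components together with the $\pi_1$-action, and analytification induces a bijection on connected components of $V$ and $V^{an}$ (a variety is connected iff its analytification is, by e.g. GAGA-type arguments or directly from the fact that $V^{an}$ is path-connected when $V$ is connected) and an isomorphism of étale fundamental groups after prime-to-$p$ completion; (ii) then follows since $\check H^1(-, G) $ classifies $G$-torsors, i.e. is $\hom_{\mathrm{cont}}(\pi_1, G)/\text{conj}$, and the comparison of $\pi_1$'s up to prime-to-$p$ completion — which is essentially Berkovich's comparison of fundamental groups, or can be bootstrapped from (iii) with $\mathscr{L}$ ranging over $\mathbb{Z}/\ell^n$ plus the classification of finite groups of order prime to $p$ by their $\ell$-Sylows — gives exactly the needed agreement. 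The proper / $\mathrm{char}\,K = 0$ case is handled identically but now (iii) holds for \emph{all} primes $\ell$: when $\mathrm{char}\,K = 0$ there is no excluded prime, and when $V$ is proper Berkovich's comparison is an isomorphism on all torsion cohomology including $p$-torsion, so Proposition~\ref{shapeequivalence} (profinite case) applies verbatim.

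The step I expect to be the main obstacle is making the $\pi_1$ / Čech $H^1$ comparison (items (i) and (ii)) fully rigorous with the correct completions, i.e. showing that analytification induces an isomorphism $\pi_1^{\acute{e}t}(V^{an})^{(p')} \xrightarrow{\sim} \pi_1^{\acute{e}t}(V)^{(p')}$ (and an isomorphism on full $\pi_1^{\acute{e}t}$ when $\mathrm{char}\,K=0$ or $V$ proper). In the proper case this is classical (Berkovich, or SGA1-style comparison via the equivalence of finite étale sites). In the general open case in characteristic $p$ one genuinely needs to pass to prime-to-$p$ quotients — wild covers are not seen analytically — and the cleanest route is to deduce the $\pi_1$ statement from the cohomological statement (iii) rather than prove it independently: a prime-to-$p$ profinite group is determined by its continuous cohomology with all $\mathbb{Z}/\ell^n$ coefficients together with the resulting $H^1$'s, so the already-established isomorphisms on $H^*(-,\mathbb{Z}/\ell^n)$ force the prime-to-$p$ completed fundamental (pro-)groups to agree. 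Concretely I would: (a) reduce to $V$ connected; (b) invoke Berkovich's Artin comparison to get the $H^q$ isomorphisms of (iii); (c) deduce (i) and (ii) as above; (d) cite Proposition~\ref{notpprofiniteshapeequivalence} to conclude $\profettoptype_\notp V \simeq \profettoptype_\notp V^{an}$; and (e) repeat with Proposition~\ref{shapeequivalence} in the proper or characteristic-zero case, where $p$-torsion coefficients are also covered.
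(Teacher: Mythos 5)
Your overall strategy coincides with the paper's: reduce to the torsor/cohomology criteria of Propositions \ref{shapeequivalence} and \ref{notpprofiniteshapeequivalence}, and feed in Berkovich's comparison theorem for the abelian cohomology of prime-to-$p$ torsion sheaves. The genuine gap is in your preferred treatment of items (i)--(ii), the fundamental group comparison. You propose, as the ``cleanest route,'' to deduce the isomorphism $\pi_1^{\acute{e}t}(V^{an})^{(p')} \xrightarrow{\sim} \pi_1^{\acute{e}t}(V)^{(p')}$ formally from the already-established isomorphisms on $H^*(-,\mathbb{Z}/\ell^n)$. This does not work. First, a profinite group is not determined by its cohomology with constant coefficients, and a map inducing isomorphisms on such cohomology need not be an isomorphism of groups; the criterion in Proposition \ref{notpprofiniteshapeequivalence} demands \v{C}ech $H^1$ with \emph{nonabelian} finite constant groups precisely because this is extra data. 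Second, and more fundamentally, the cohomological comparison (iii) only concerns sheaves pulled back from $V$, so it is structurally incapable of detecting étale covers or torsors on $V^{an}$ that are not algebraic; but showing that every prime-to-$p$ torsor on $V^{an}$ descends to $V$ (injectivity of the map on prime-to-$p$ completed $\pi_1$) is exactly the hard geometric content here. It is a Riemann-existence-type theorem, not a formal consequence of abelian cohomology: the paper supplies it by citing L\"utkebohmert (Theorem 3.1 of \cite{lutkebohmert1993riemann} in characteristic zero, Theorem 4.1 for $G$-torsors with $p \nmid |G|$ in characteristic $p$). Your alternative route (``essentially Berkovich's comparison of fundamental groups'') is the right idea but is left as an unspecified citation, so as written the torsor step is not established.

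Two smaller points in the proper case. Your claim that ``Berkovich's comparison is an isomorphism on all torsion cohomology including $p$-torsion'' when $V$ is proper is true but is not covered by the theorem you quote (which excludes residue/field characteristic torsion); the paper justifies it by passing through compactly supported cohomology, using that $H^i_c = H^i$ for compact $\mathcal{X}$ and invoking the comparison for $H^i_c$ from \cite{berkovich1993etale}, which has no restriction on coefficient orders. For the full profinite $\pi_1$ comparison when $V$ is proper, your ``SGA1-style equivalence of finite étale sites'' sketch matches the paper's argument (representability of torsors by finite étale spaces plus GAGA for the corresponding finite algebras), and is fine modulo writing it out.
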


\begin{proof}
We wish to apply Proposition \ref{notpprofiniteshapeequivalence}. By the referenced theorem of \cite{berkovich1995comparison}, we have the needed comparison result for abelian cohomology. The remaining question is on fundamental groups, which in the case that $K$ is of characteristic zero is a direct corollary of Theorem 3.1 of \cite{lutkebohmert1993riemann}, and in the case that $K$ is of characteristic $p>0$, follows from Theorem 4.1 of \cite{lutkebohmert1993riemann} which gives an equivalence on the category of $G$-torsors for any group $G$ whose order is not divisible by $p$.

Now assume that $V$ is proper.

In both the classical and Berkovich \'etale categories, sheaf theoretic $G$-torsors are representable as spaces finite and \'etale over their respective bases. These spaces are in bijection with locally free commutative algebra objects in the category coherent modules, and GAGA gives an equivalence between the two categories. This proves the two profinite spaces have the same fundamental group for any basepoint.

To compare the $p$-primary abelian cohomology, we will use compactly supported \'etale cohomology. For a general proper morphism $f: \mathcal{X} \rightarrow \mathcal{Y}$ of non-archimedean analytic spaces and a constructible sheaf $\mathscr{F}$, Berkovich's $f_! \mathscr{F}$ is not equal to $f_* \mathscr{F}$. In the case where $\mathcal{Y}$ is compact, their global sections \emph{are} equal. This implies that at least the derived functors
\[ R\Gamma \circ Rf_! \simeq R\Gamma \circ Rf_* \]
are equal. In particular if $\mathcal{Y}$ is a point and $\mathcal{X}$ is compact, we see that the compactly supported \'etale cohomology agrees with the usual \'etale cohomology
\[ H^i_c(\mathcal{X}_{\acute{e}t}, \mathscr{F}) \cong H^i(\mathcal{X}_{\acute{e}t}, \mathscr{F}) \]
We can then apply the comparison theorem for compactly supported cohomology of \cite{berkovich1993etale}, which does not have the restriction on the orders of the coefficient groups. This gives the chain of equivalences

\[ H^i(V_{\acute{e}t},\mathscr{F}) \cong H^i_c(V_{\acute{e}t},\mathscr{F}) \cong H^i_c(V^{an}_{\acute{e}t},\mathscr{F}) \cong H^i(V^{an}_{\acute{e}t},\mathscr{F}) \]
\end{proof}

In fact the above point that principal homogeneous spaces are representable in the category of non-archimedean $K$-analytic spaces does not require finiteness hypotheses. This proves the following theorem.

\begin{theorem}
Let $\mathcal{X}$ be a non-archimedean $K$-analytic space and let $\overline{x}$ be a given geometric point of $\mathcal{X}$, then the fundamental pro-group of the \'etale homotopy type of $\mathcal{X}$ is isomorphic to de Jong's \'etale fundamental group as defined in \cite{dejongetalefundamentalgroups}.

\[  \pi_1^{top}(\ettoptype \mathcal{X}, \overline{x}) \cong \pi_1^{\acute{e}t}(\mathcal{X},\overline{x})\]
\end{theorem}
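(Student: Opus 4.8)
The plan is to unwind both sides as classifiers of the same category of covers and then invoke the machinery already developed in Section 1. De Jong's \'etale fundamental group $\pi_1^{\acute{e}t}(\mathcal{X},\overline{x})$ is defined in \cite{dejongetalefundamentalgroups} so that its category of continuous finite-set representations is equivalent to the category of (possibly infinite, but locally-in-the-topology-finite) \'etale covers of $\mathcal{X}$ that are split by an \'etale covering; crucially, de Jong works with a topological fundamental group whose open subgroups need not have finite index, precisely because disjoint-union and infinite covers are allowed. On the other side, $\pi_1^{top}(\ettoptype \mathcal{X}, \overline{x})$ is the fundamental pro-group of the shape of $\hootopos \mathcal{X}_{\acute{e}t}$, which by the Proposition proved earlier (the $n=0$ case of Hoyois, our equivalence $\Phi : LC(S) \to LS(\shape S)$) classifies exactly the locally constant sheaves of sets on the \'etale site. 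So the first and main step is to match up de Jong's covers with locally constant sheaves of sets on $\site \mathcal{X}_{\acute{e}t}$.

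First I would set up the correspondence between de Jong's \'etale covers and locally constant sheaves of sets. The sheaf of sections of an \'etale cover $Y \to \mathcal{X}$ is locally constant precisely when the cover is split by an \'etale covering, which is de Jong's condition; conversely a locally constant sheaf of sets gives descent data for a cover, and here I would invoke the remark made in the proof of Theorem \ref{artin-comparison-ell} that principal homogeneous spaces, and more generally the covers corresponding to these sheaves, are \emph{representable} in the category of non-archimedean $K$-analytic spaces \emph{without any finiteness hypothesis}. This representability is exactly what lets me pass from a locally constant sheaf with possibly infinite fibers back to an honest \'etale cover in de Jong's sense, and it is the reason the statement concerns the full topological fundamental group rather than its profinite completion. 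Thus I obtain an equivalence of categories between de Jong's covers and $LC(\site \mathcal{X}_{\acute{e}t})$.

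Next I would feed this through the apparatus of Section 1. By the Proposition on locally constant sheaves (and Corollary \ref{locallyconstantsheaves}), the category $LC(\site \mathcal{X}_{\acute{e}t})$ is equivalent to the category $LS(\ettoptype \mathcal{X})$ of local systems of sets on the shape, and the category of local systems of sets on a (pro-)space is by definition the category of continuous actions of its fundamental pro-groupoid, hence of $\pi_1^{top}(\ettoptype \mathcal{X},\overline{x})$ after fixing the basepoint $\overline{x}$. Composing, I get an equivalence between the category of continuous $\pi_1^{top}$-sets and de Jong's category of covers, which is the category of continuous $\pi_1^{\acute{e}t}$-sets. A Tannakian/Galois-theoretic reconstruction argument -- both pro-groups are recovered as the automorphism group of the fiber functor at $\overline{x}$ -- then yields the asserted isomorphism $\pi_1^{top}(\ettoptype \mathcal{X},\overline{x}) \cong \pi_1^{\acute{e}t}(\mathcal{X},\overline{x})$, once I check that the basepoint $\overline{x}$ induces the \emph{same} fiber functor on both sides, which is a compatibility that follows from the construction of $\Phi$ as taking stalks.

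The main obstacle I expect is the representability step: I must be careful that de Jong's definition really does permit the infinite covers that correspond to locally constant sheaves with infinite stalks, and that the fiber functor matching is strict enough to give an honest isomorphism of pro-groups rather than a mere equivalence of their representation categories up to twisting the basepoint. Relatedly, one has to confirm that the reconstruction of $\pi_1^{top}$ as automorphisms of the fiber functor is literally de Jong's definition of $\pi_1^{\acute{e}t}$ and not a profinite variant; since de Jong explicitly builds a non-profinite topological group as $\aut$ of the fiber functor on his Galois category of covers, this should align, but verifying the topologies agree (that open subgroups correspond on both sides) is where the real work lies.
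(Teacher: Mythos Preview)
Your approach is correct and is essentially the same as the paper's, only far more explicit: the paper's entire argument is the single sentence preceding the theorem, namely that principal homogeneous spaces are representable in the category of non-archimedean $K$-analytic spaces without finiteness hypotheses, and you have correctly unpacked what that sentence is doing by routing through the equivalence $\Phi : LC(\site \mathcal{X}_{\acute{e}t}) \simeq LS(\ettoptype \mathcal{X})$ of Section~1 and Corollary~\ref{locallyconstantsheaves}. One small slip: you wrote ``continuous finite-set representations'' when describing de Jong's group, but de Jong's whole point (and yours, two lines later) is that the covers and hence the $\pi_1^{\acute{e}t}$-sets are allowed to be infinite; just drop the word ``finite'' there.
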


On the left hand side, the pair $(\ettoptype \mathcal{X},\overline{x})$ means we are identifying the pro-space $\ettoptype \overline{x}$ as a point, and functoriality gives the morphism $\ettoptype \overline{x} \rightarrow \ettoptype \mathcal{X}$ which witnesses a point on the latter pro-space.

A natural question to ask would be if the above shape equivalence holds without profinite completion. This fails almost immediately, as de Jong's observes in \cite{dejongetalefundamentalgroups} that the $p$-adic logarithm is a $\cup_i \mu_{p^i} $-torsor over $\mathbb{A}^1_{\mathbb{C}_p}$. A natural solution would be to assume properness, but this of course does not work -- even in the discretely valued case -- as the following example shows.

\begin{example}
Let $E$ be an elliptic curve over a complete discretely valued field $K$, which admits a regular but not smooth semi-stable model over the valuation ring $R$. Then the map of \'etale homotopy types is not an equivalence of protruncated spaces
\[ \ettoptype E^{an} \rightarrow \ettoptype E \]
To see this, just compute $H^1(-,\mathbb{Q})$ using Theorem \ref{rationalchangeoftopssdegen}.
\end{example}

In fact in \cite{dejongetalefundamentalgroups} it is even shown that $\mathbb{P}^n_{\mathbb{C}_p}$ itself has interesting fundamental group.

Similarly, we can ask about analogues of Friedlander's homotopy fiber theorem, see \cite{friedlander1982etale}. We can modify the above to see that it must be false without $\ell$-profinite completion, even in pure characteristic zero.

\begin{example}
Give $\mathbb{A}^2 \times \mathbb{P}^2$ the coordinates $(a,b; \, x:y:z)$, and consider the space $\mathcal{Y} \subset \mathbb{A}^4$ defined by the ideal $(y^2z - x^3 - axz^2 - bz^3)$. The map $\pi : \mathcal{Y} \rightarrow \mathbb{A}^2$ is clearly projective, but has some non-smooth fibers. Remove the discriminant locus $\Delta$ in $\mathbb{A}^2$ to get $\mathcal{B} = \mathbb{A}^2 \setminus \Delta$, and write $\mathcal{E} = \pi^{-1} \mathcal{B}$.

Then $\pi : \mathcal{E} \to \mathcal{B}$ is smooth and projective, but different fibers may have different \'etale homotopy types. Since every elliptic curve up to isomorphism is a fiber in this family, both elliptic curves of good reduction and semi-stable bad reduction appears as a fiber in the family. In particular, the \'etale homotopy type of at least one of the fibers is not equivalent to the homotopy fiber of $\ettoptype \mathcal{E} \rightarrow \ettoptype \mathbb{A}^1_K$.
\end{example}

Since analytification preserves fiber products, we know that $(X_{\overline{y}})^{an} = X^{an}_{\overline{y}^{an}}$. Theorem \ref{artin-comparison-ell} implies that if we analytify one of these fiber sequences it remains a fiber sequence. In fact Friedlander's proof works in the non-archimedean world, under somewhat restrictive assumptions.

\begin{proposition}
Let $f : \mathcal{X} \rightarrow \mathcal{Y}$ be a smooth and proper map of non-archimedean $K$-analytic spaces for $K$ non-trivially valued. Assume that

\begin{enumerate}
\item both are compact and hausdorff,

\item the morphism $f$ has geometrically connected fibers,

\item the \'etale fundamental group $\pi_1(\mathcal{Y})$ acts trivially on the cohomology of the fibers,

\item and that the analytic space $\mathcal{Y}$ is connected.
\end{enumerate}

Then the following is a homotopy fiber sequence of $\ell$-profinite spaces.

\[ \profettoptype_\ell \mathcal{X}_{\overline{y}} \rightarrow \profettoptype_\ell \mathcal{X} \rightarrow \profettoptype_\ell \mathcal{Y}  \]
\end{proposition}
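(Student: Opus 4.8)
The plan is to transport Friedlander's original fibration argument \cite{friedlander1982etale} to the Berkovich setting, supplying the base–change inputs from Berkovich's étale cohomology and carrying out the ``Whitehead'' step through the $\ell$-profinite completion. First I would reduce the statement to a cohomological comparison. Base change along the geometric point $\overline{y}$ yields a map $\mathcal{X}_{\overline{y}} \to \mathcal{X}$ whose composite with $f$ factors through $\overline{y}$; since $\overline{y}$ is a geometric point its étale $\infty$-topos is trivial, so $\profettoptype_\ell \overline{y} \simeq *$. Functoriality of $\profettoptype_\ell$ then produces a commutative square with bottom row $* \to \profettoptype_\ell \mathcal{Y}$, hence a canonical comparison map $c : \profettoptype_\ell \mathcal{X}_{\overline{y}} \to F$, where $F$ denotes the homotopy fiber of $\profettoptype_\ell \mathcal{X} \to \profettoptype_\ell \mathcal{Y}$ over the chosen point. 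The proposition is equivalent to the assertion that $c$ is an equivalence of $\ell$-profinite spaces. Both $\profettoptype_\ell \mathcal{X}_{\overline{y}}$ and $F$ are connected (using that $f$ is proper and surjective with geometrically connected fibers and that $\mathcal{Y}$ is connected, which forces $\pi_1^{\acute{e}t}(\mathcal{X}) \to \pi_1^{\acute{e}t}(\mathcal{Y})$ to be surjective), and on connected $\ell$-profinite spaces an equivalence is detected by $\mathbb{Z}/\ell$-cohomology — the $\ell$-profinite Whitehead theorem underlying Proposition \ref{pprofshapeequivalence} and the discussion around it. So it suffices to show that $c^* : H^*(F, \mathbb{Z}/\ell) \to H^*(\profettoptype_\ell \mathcal{X}_{\overline{y}}, \mathbb{Z}/\ell)$ is an isomorphism.

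Next I would play off two spectral sequences, each converging to $H^*(\profettoptype_\ell \mathcal{X}, \mathbb{Z}/\ell) \cong H^*(\mathcal{X}_{\acute{e}t}, \mathbb{Z}/\ell)$. On the analytic side there is the Leray spectral sequence of $f$, with $E_2^{p,q} = H^p(\mathcal{Y}_{\acute{e}t}, R^q f_* \mathbb{Z}/\ell)$. Because $f$ is proper and $\mathcal{X}, \mathcal{Y}$ are compact and Hausdorff, Berkovich's proper and smooth base change theorems (as developed in \cite{berkovich1993etale} and used in \cite{berkovich1995comparison}) apply and identify $R^q f_* \mathbb{Z}/\ell$ with a locally constant constructible sheaf with stalk $H^q(\mathcal{X}_{\overline{y}}, \mathbb{Z}/\ell)$; the hypothesis that $\pi_1(\mathcal{Y})$ acts trivially on fiber cohomology, together with connectedness of $\mathcal{Y}$, upgrades this to the constant sheaf, so $E_2^{p,q} = H^p(\mathcal{Y}_{\acute{e}t}, \mathbb{Z}/\ell) \otimes_{\mathbb{Z}/\ell} H^q(\mathcal{X}_{\overline{y}}, \mathbb{Z}/\ell)$ by Künneth over the field $\mathbb{Z}/\ell$. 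On the homotopical side there is the Serre spectral sequence of the fibration $F \to \profettoptype_\ell \mathcal{X} \to \profettoptype_\ell \mathcal{Y}$, with $E_2^{p,q} = H^p(\profettoptype_\ell \mathcal{Y}, \mathcal{H}^q(F))$ for the local system $\mathcal{H}^q(F)$ of $\mathbb{Z}/\ell$-cohomology of the homotopy fiber. Applying the shape functor to the geometric morphism $\hootopos \mathcal{X}_{\acute{e}t} \to \hootopos \mathcal{Y}_{\acute{e}t}$ and invoking the local-systems-and-cohomology dictionary established earlier (the functor $\Phi$ identifying cohomology of locally constant sheaves with cohomology of the shape, in the special case of $\mathbb{Z}/\ell$-coefficients), I would produce a morphism of spectral sequences from the Leray to the Serre one which is the identity on abutments, is the identity of $H^*(\mathcal{Y}_{\acute{e}t}, \mathbb{Z}/\ell)$ on the base edge $q = 0$, and is $c^*$ on the fiber edge $p = 0$; in particular the Serre coefficient system $\mathcal{H}^q(F)$ receives a compatible map from the constant system $H^q(\mathcal{X}_{\overline{y}})$, so the Serre $E_2$-page is again a tensor product.

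Finally I would invoke Zeeman's comparison theorem for first-quadrant spectral sequences: a morphism of such spectral sequences of $\mathbb{Z}/\ell$-modules whose $E_2$-pages are Künneth tensor products and which is an isomorphism both on the abutment and on the base edge is automatically an isomorphism on the fiber edge. This forces $c^* : H^*(F, \mathbb{Z}/\ell) \to H^*(\mathcal{X}_{\overline{y}}, \mathbb{Z}/\ell)$ to be an isomorphism, and combined with the first paragraph this gives that $c$ is an equivalence of $\ell$-profinite spaces, i.e.\ that the displayed sequence is a homotopy fiber sequence.

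The hard part will be the construction of the comparison of spectral sequences — making precise that the $\ell$-profinite étale homotopy type functor carries the étale Leray spectral sequence of $f$ to the Serre spectral sequence of the associated fibration of pro-spaces, together with the compatibility of the two coefficient systems. In Friedlander's scheme-theoretic treatment this compatibility is more or less built into the coskeletal/Verdier models; in the present $\infty$-categorical framework it must instead be extracted from the descent properties of $\profettoptype_\ell$ and the local-systems dictionary, and some care is needed to ensure the Serre spectral sequence of a fibration of $\ell$-profinite spaces behaves like the classical one (no $\lim^1$ pathology, first-quadrant, edge maps as expected), which should follow from $\pi$-finiteness. A secondary, more geometric, point is checking that Berkovich's proper and smooth base change theorems really do apply under precisely the stated compactness and Hausdorffness hypotheses — this is the reason the proposition is formulated with such restrictive assumptions.
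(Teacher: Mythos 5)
Your overall architecture is the same as the paper's: compare the sheaf-theoretic Leray spectral sequence of $f$ with the Serre spectral sequence of the induced map of $\ell$-profinite shapes via the Verdier/local-systems dictionary, use smooth--proper base change plus hypothesis (3) to make $R^qf_*\mathbb{Z}/\ell$ constant, and finish with the $\ell$-profinite Whitehead criterion of Proposition \ref{pprofshapeequivalence}. The genuine gap is in your endgame. Zeeman's comparison theorem needs \emph{both} $E_2$-pages to be of K\"unneth tensor-product form, i.e.\ it needs the Serre coefficient system $\mathcal{H}^q(F)$ on $\profettoptype_\ell \mathcal{Y}$ to be constant. Hypothesis (3) only says that $\pi_1(\mathcal{Y})$ acts trivially on $H^q(\mathcal{X}_{\overline{y}},\mathbb{Z}/\ell)$, not on $H^q(F,\mathbb{Z}/\ell)$, and your justification --- that $\mathcal{H}^q(F)$ receives a compatible map from the constant system $H^q(\mathcal{X}_{\overline{y}},\mathbb{Z}/\ell)$ --- is a non sequitur: a local system receiving a map from a constant one need not be constant, and that map being an isomorphism is essentially the statement you are trying to prove. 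Without triviality (or at least nilpotence) of the $\pi_1$-action on the homotopy-fiber cohomology, a map of fibrations that is an isomorphism on base and total-space cohomology need not be one on fiber cohomology, so the Zeeman step as written does not close the argument.

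This is precisely the difficulty that the paper (following Friedlander \cite{friedlander1982etale}) circumvents: instead of comparing over $\mathcal{Y}$ itself, one pulls the whole situation back along a tower of pointed finite \'etale coverings of $\mathcal{Y}$ and passes to the colimit, where the cohomology spectral sequences collapse; the isomorphism of abutments then directly forces $H^q(\mathcal{X}_{\overline{y}},\mathbb{Z}/\ell)\rightarrow H^q(F,\mathbb{Z}/\ell)$ to be an isomorphism, and the $\ell$-profinite Whitehead criterion finishes the proof. If you want to keep a Zeeman-style conclusion you must first run some such covering-space (or nilpotence) argument to control the action on $\mathcal{H}^q(F)$. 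A secondary point: the local constancy of $R^qf_*\mathbb{Z}/\ell$ is not taken from Berkovich directly in the paper; for strictly $K$-analytic spaces it is imported through Huber's adic spaces (Theorem 6.2.2 of \cite{huber2013etale} and its corollary), and the non-strict case is handled by extending the ground field and descending, an input you would need to supply since the hypotheses do not assume strict analyticity.
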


\begin{proof}
We have three converging spectral sequences and morphisms between them
\begin{center}
\begin{tikzcd}[column sep=small]
E_2^{p,q}(S) & = & H^p_{\mathrm{sing}}(\profettoptype_\ell \mathcal{Y}, H^q(F,\mathbb{Z}/\ell)) & \Rightarrow & H^{p+q}_{\mathrm{sing}}(\profettoptype_\ell \mathcal{X},\mathbb{Z}/\ell) \\
E_2^{p,q}(S') \arrow[u] & =  & H^p_{\mathrm{sing}}(\profettoptype_\ell \mathcal{Y}, H^q(\mathcal{X}_{\overline{y}},\mathbb{Z}/\ell)) \arrow[u] & \Rightarrow & H^{p+q}_{\mathrm{sing}}(\profettoptype_\ell \mathcal{X},\mathbb{Z}/\ell) \arrow[u] \\
E_2^{p,q}(V) \arrow[d] \arrow[u, "\simeq"] & = & H^p_{\mathrm{sheaf}}(\mathcal{Y}_{\acute{e}t}, \underline{ H^q(X_{\overline{y}}, \mathbb{Z}/\ell)}) \arrow[d] \arrow[u, "\simeq"] & \Rightarrow &  H^{p+q}_{\mathrm{sheaf}}(\mathcal{X}_{\acute{e}t}, \mathbb{Z}/\ell) \arrow[d] \arrow[u, "\simeq"]\\
E_2^{p,q}(L) & = & H^p_{\mathrm{sheaf}}(\mathcal{Y}_{\acute{e}t},R^qf_* \mathbb{Z}/\ell) & \Rightarrow & H^{p+q}_{\mathrm{sheaf}}(\mathcal{X}_{\acute{e}t},\mathbb{Z}/\ell)
\end{tikzcd}
\end{center}
The marked isomorphisms come from Verdier's hypercover theorem. If each $R^q f_* \mathbb{Z}/\ell$ is constant, then $E_2^{p,q}(V) \to E_2^{p,q}(L)$ is also an isomorphism. The map from $E_2^{p,q}(S')$ to $E_2^{p,q}(S)$ is the one induced by the universal property of the homotopy fiber. For $R^q f_* \mathbb{Z}/\ell$ to be constant, it is enough to know that $R^if_*$ takes locally constant $\ell$-torsion sheaves to locally constant $\ell$-torsion sheaves, \emph{i.e.} proper-smooth base change. If $\mathcal{X}$ and $\mathcal{Y}$ are strictly $K$-analytic, we may appeal to the theory of adic spaces to show this. See for example Theorem 6.2.2 of \cite{huber2013etale} and its Corollary, noting that Propositions 3.3.1 and 3.3.2 of \cite{berkovich2012spectral} guarantee the associated morphism of rigid spaces is smooth and proper, and Remark 1.3.19 and Proposition 1.7.11 of \cite{huber2013etale} guarantee the associated morphism of adic spaces is smooth and proper. Vladimir Berkovich has pointed out in private communication that one may tensor up to a larger non-archimedean field where the base space is strictly analytic, and then push the results back down to deduce that the higher direct images of locally constant sheaves are locally constant for a smooth and proper morphism with non-strictly analytic base.

From here, we simply proceed as in \cite{friedlander1982etale}. We create a tower of \'etale coverings of $\mathcal{Y}$, and in the colimit the cohomology spectral sequences all collapse. The isomorphism on abutments forces the $E_2^{p,q}$ terms to become isomorphic, so that in particular
\[ H^q(\mathcal{X}_{\overline{y}},\mathbb{Z}/\ell) \to H^q(F,\mathbb{Z}/\ell) \]
is an isomorphism. In $\ell$-profinite spaces, this says exactly that they are equivalent.
\end{proof}

\section{Kummer \'etale homotopy type of log formal schemes}

Throughout this section, we will let $R$ be a complete discrete valuation ring, $\mathfrak{m}=(\pi)$ its maximal ideal with chosen non-zero uniformizer $\pi$, $K$ its field of fractions, and $k$ its residue field.

We must first review the theory already in place for formal schemes, set up in \cite{berkovich1994vanishing} and \cite{berkovich1996vanishing}.

\begin{definition}
A formal scheme $\mathfrak{X}/\spf R$ is called \emph{special formal} if it locally is the formal spectrum of an adic ring $A$, and for some ideal of definition $\mathfrak{a} \subset A$, the quotients $A/\mathfrak{a}^n$ for positive integer powers are all finitely generated over $R$.
\end{definition}

Any formal scheme topologically of finite type is special. Recall that the local models for special formal schemes are formal affines that admit a closed immersion into the formal spectrum of the topological ring $R\{T_1,\ldots,T_m\}[[S_1,\ldots,S_n]]$.

\begin{definition}
Let $f: \mathfrak{X} \rightarrow \mathfrak{Y}$ a morphism of special formal scheme over $\spf R$. We say $f$ is \emph{adically \'etale} if it is locally topologically of finite type, and for any ideal of definition $I$ of $\mathfrak{Y}$, the induced morphism of schemes $(\mathfrak{X},\mathcal{O}_\mathfrak{X}/I\mathcal{O}_\mathfrak{X}) \rightarrow (\mathfrak{Y},\mathcal{O}_\mathfrak{Y}/I)$ is classically \'etale.
\end{definition}

These morphisms do not obviously give a topos of direct interest from the perspective of non-archimedean geometry. Proposition 2.1 of \cite{berkovich1996vanishing} gives a natural equivalence between the adically \'etale topos of $\mathfrak{X}$ and the classical \'etale topos of its closed fiber $\mathfrak{X}_s$. In particular the adically \'etale topos of a special formal scheme is not invariant under admissible blow up. We can, however, use this topology to define log structures on such schemes.

\begin{definition}
Let $\mathfrak{X}$ be a special formal scheme. A \emph{pre-log structure} on $\mathfrak{X}$ is the data $(\mathcal{M}_\mathfrak{X},\alpha)$ of a sheaf of commutative monoids $\mathcal{M}_\mathfrak{X}$ for Berkovich's adically \'etale topology, and a homomorphism of sheaves of monoids $\alpha : \mathcal{M}_\mathfrak{X} \rightarrow \mathcal{O}_\mathfrak{X}$, where the target sheaf is given the multiplicative structure.

In the case where $\alpha: \alpha^{-1}(\mathcal{O}_\mathfrak{X}^* )\rightarrow \mathcal{O}_\mathfrak{X}$ is an isomorphism, we say that $(\mathcal{M}_\mathfrak{X},\alpha)$ is a \emph{log structure} on $\mathfrak{X}$.

Given any pre-log structure $\mathcal{M}'$, there is a unique up to unique isomorphism associated log structure $\mathcal{M}$ through which any morphism $\mathcal{M}' \rightarrow \mathcal{N}$ to a log structure factors uniquely. This is given by the pushout of

\[ \mathcal{M}' \leftarrow \alpha^{-1}\mathcal{O}_X \rightarrow \mathcal{O}_X \]

A \emph{morphism} $(f,f^\#): (\mathfrak{X},\mathcal{M}, \alpha) \rightarrow (\mathfrak{Y}, \mathcal{N}, \beta)$ \emph{of log special formal schemes} is a morphism of underlying special formal schemes $f$, along with a compatibility morphism $f^\# : f^{-1}\mathcal{N} \rightarrow \mathcal{M}$ of the log structures, in the sense that it gives the obvious commutative diagram between $f^{-1} \beta : f^{-1}\mathcal{N} \rightarrow f^{-1}\mathcal{O}_\mathfrak{Y}$ and $\alpha : \mathcal{M} \rightarrow \mathcal{O}_\mathfrak{X}$.
\end{definition}

The primary case of interest is where the log structure is given by the functions vanishing only along the special fiber. In fact we will give all special formal schemes the \emph{canonical log structure}, denoted $\mathcal{M}_\mathfrak{X}^{can}$ which is given by setting the sheafs sections $\mathcal{M}_\mathfrak{X}^{can}(\mathfrak{Y})$ on an adically \'etale morphism $\mathfrak{Y} \rightarrow \mathfrak{X}$ to be the subset of $\mathcal{O}_\mathfrak{Y}$ given functions which are invertible on the non-archimedean analytic space associated to the generic fiber $\mathfrak{Y}_\eta$. If $Y \subset \mathfrak{X}_s$ is a subscheme, then we can complete along it to get $\hat{\mathfrak{X}}_{Y}$. This can be given a log structure by demanding the morphism $\hat{\mathfrak{X}}_Y\rightarrow \mathfrak{X}$ be a strict log morphism, that is by letting the log structure on $\mathfrak{X}$ generate one on $\hat{\mathfrak{X}}_Y$. This does not agree in general with the canonical log-structure on $\hat{\mathfrak{X}}_Y$.

We can generalize this type of log structure with the notion of a vertical log structure from \cite{berkovichcomplex}

\begin{definition}
The log structure $\mathcal{M}_\mathfrak{X}$ on a log formal scheme will be called \emph{vertical} if the localization of $\mathcal{M}_\mathfrak{X}$ with respect to the sub-monoid $R\setminus \{0\}$ is a sheaf of abelian groups.
\end{definition}

An important example is that the canonical log structure is vertical.

Working with \'etale sheaves is somewhat unwieldy, so we bring along the notion of a chart for a log structure.

\begin{definition}
A log special formal scheme $(\mathfrak{X}, \mathcal{M})$ will be said to have a \emph{chart} $\mathcal{P} \rightarrow \mathcal{M}$ if $\mathcal{P}$ is a constant sheaf in the adically \'etale topology for a monoid $P$, and $\mathcal{M}$ is the associated log structure.

A log structure on a special formal scheme is said to be \emph{coherent} if it adically \'etale locally admits charts. The log structure will be called \emph{fine} or \emph{saturated} if all such charts are fine or saturated. See for example \cite{kato1989logarithmic} for full definitions.
\end{definition}

\subsection{Algebra of log special formal schemes}

There are two natural candidates for classes of `good' morphisms between special formal schemes, those that are completions of morphisms of schemes finite type and flat over $\spec R$ with the corresponding property, and those that are `adically' so. For our purposes it turns out the base objects are those of the former type, but the Grothendieck topology will consist of morphisms of the latter type.

\begin{definition}
Let $\mathfrak{X}$ be a morphism  between fine vertical log special formal schemes. We will say that:
\begin{enumerate}
\item $f$ is algebraizably formally $R$-log smooth,
\item $f$ is algebraizably $R$-log smooth
\end{enumerate}
if adically \'etale locally we can realize the morphism as the completion of a morphism of flat and finite type fine log schemes over $\spec R$. That is, as a map $\phi : (X,M) \rightarrow \spec R$ completed along a locally closed subscheme $Z \subset \mathfrak{X}_s$, e.g.
\[ \hat{X}_{Z} \rightarrow \hat{X} \rightarrow \spf R \]
and further more $\phi$ satisfies the corresponding property in the following list:

\begin{enumerate}
\item if $\phi$ can be taken to be log smooth on a neighborhood of $Z$,

\item if $\phi$ can be taken to be log smooth on a neighborhood of $Z$, and $Z = X_s$,
\end{enumerate}

This is the same notion as \emph{formally $R$-log smooth} and \emph{$R$-log smooth} of \cite{berkovichcomplex}.
\end{definition}

The above can be made relative to a morphism of log special formal schemes, but it does not seem to result in anything interesting in the generality we are working in.

\begin{definition}
Let $f : \mathfrak{X} \rightarrow \mathfrak{Y}$ be a morphism which is topologically of finite type between fine vertical log special formal schemes. We will say that:
\begin{enumerate}
\item $f$ is adically log \'etale
\item $f$ is adically kummer \'etale
\end{enumerate}
if they are adically so, that is when for any ideal of definition $\mathfrak{I}$ of $\mathfrak{Y}$ the induced morphism $\overline{f} : (\mathfrak{X}, \mathcal{O}_\mathfrak{X}/\mathfrak{I}\mathcal{O}_\mathfrak{X}) \rightarrow (\mathfrak{Y}, \mathcal{O}_\mathfrak{Y}/\mathfrak{I})$ has the corresponding property.

This is the same notion as \emph{log \'etale} and \emph{kummer \'etale} of \cite{berkovichcomplex}.
\end{definition}

The class of adically kummer \'etale morphisms will give us a good category and Grothendieck pre-topology to use to define the adically kummer \'etale site.

\begin{definition}
The \emph{small adically kummer \'etale site} $\site\mathfrak{X}_{ak\acute{e}t}$ of a log special formal scheme $\mathfrak{X}$ will be the category whose underlying objects are adically kummer \'etale morphisms $\mathfrak{Y} \rightarrow \mathfrak{X}$, whose morphisms are commutative triangles. This is given the pre-topology generated by jointly surjective families.
The associated topos will be denoted $\topos\mathfrak{X}_{ak\acute{e}t}$, and the associated hypercomplete $\infty$-topos will be denoted $\hootopos \mathfrak{X}_{ak\acute{e}t}$.
\end{definition}

We will start with some lemmas about the basic algebraic structures involved here.

\begin{lemma}\label{formal-etale-structure-lemma}
Let $f: \mathfrak{X} \rightarrow \mathfrak{Y}$ be a morphism of separated locally noetherian special formal  schemes. Assume that it can be realized as a completion $f = \hat{F}$ of a map $F : X \rightarrow Y$ of schemes locally of finite type and flat over $\spec R$, with subschemes $Z \subset Y_s$ and $Z' \subset X_s$, such that $Z'$ is open in $F^{-1}(Z)$.

Explicitly, we factor $f$ as
\[ \hat{X}_Z' \rightarrow \hat{X}_{F^{-1}Z} \rightarrow \hat{Y}_Z \]
Then $F$ is classically \'etale on a neighborhood of $Z'$ if and only if $f$ is adically \'etale.
\end{lemma}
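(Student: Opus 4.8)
The plan is to reduce the statement to a purely algebraic comparison of completions of rings of finite type over $R$, since both ``adically \'etale'' and ``classically \'etale near $Z'$'' are local conditions that can be checked on formal (resp.\ ordinary) affines. First I would unwind the definition of adically \'etale: $f$ is adically \'etale iff $f$ is locally topologically of finite type and for \emph{every} ideal of definition $I$ of $\mathfrak{Y}$ (equivalently, for one cofinal family) the map of schemes $(\mathfrak{X}, \mathcal{O}_\mathfrak{X}/I\mathcal{O}_\mathfrak{X}) \to (\mathfrak{Y}, \mathcal{O}_\mathfrak{Y}/I)$ is classically \'etale. So the content is: $F$ \'etale in a neighborhood of $Z'$ $\iff$ all these reductions mod $I$ are \'etale.

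The easy direction is ($\Rightarrow$): if $F$ is \'etale on an open $U \supseteq Z'$, then since $\hat{X}_{Z'}$ only sees an infinitesimal neighborhood of $Z'$, the morphism $f$ factors through $\hat{U}_{Z'} \to \hat{Y}_Z$, and \'etaleness of $F|_U$ passes to each reduction $\mathcal{O}_\mathfrak{X}/I\mathcal{O}_\mathfrak{X} \to$ base, because the closed fiber schemes $(\mathfrak{X}, \mathcal{O}_\mathfrak{X}/I\mathcal{O}_\mathfrak{X})$ are just infinitesimal thickenings of $Z'$ inside $U$ (\'etaleness is preserved by base change and is insensitive to nilpotent thickenings of the base). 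The harder direction is ($\Leftarrow$): assume all the reductions are \'etale; I want to conclude $F$ is \'etale near $Z'$. Here I would argue as follows. Choose $I = \mathfrak{m}$ the maximal ideal of $R$ (pushed forward), so the first reduction is $F_s : X_s \to Y_s$ restricted near $Z'$, which is \'etale; in particular $F$ is unramified and flat at every point of $Z'$ by the fiberwise criterion — flatness near $Z'$ follows because $F$ is already flat over $\operatorname{Spec} R$, and the special fiber map is flat. More carefully, \'etaleness is an open condition on the source, so the locus $V \subseteq X$ where $F$ is \'etale is open; I must show $V \supseteq Z'$. A point $z \in Z' \subseteq X_s$ lies in $V$ iff $F$ is flat and unramified at $z$; unramifiedness at $z$ can be checked on the fiber $X_s \to Y_s$ (since $\Omega_{X/Y}$ is coherent and $\Omega_{X/Y} \otimes \kappa(z) = \Omega_{X_s/Y_s}\otimes\kappa(z) = 0$ by hypothesis, Nakayama gives $\Omega_{X/Y} = 0$ near $z$), and flatness at $z$ follows from the local criterion of flatness applied to $\mathcal{O}_{X,z}$ over $\mathcal{O}_{Y, F(z)}$ using flatness over $R$ and flatness of the special fiber map. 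Hence $V$ is an open neighborhood of $Z'$ and $F$ is \'etale there.

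The main obstacle I anticipate is bookkeeping around the completions: one must be careful that ``adically \'etale'' was phrased for the formal scheme $\mathfrak{X} = \hat{X}_{Z'}$, whose underlying topological space is $Z'$, not the bigger $F^{-1}(Z)$, and that the ideals of definition of $\mathfrak{Y} = \hat{Y}_Z$ pull back correctly; the factorization $\hat{X}_{Z'} \to \hat{X}_{F^{-1}Z} \to \hat{Y}_Z$ in the statement is exactly what lets one identify $\mathcal{O}_\mathfrak{X}/I\mathcal{O}_\mathfrak{X}$ with the restriction of a thickening of $Z'$. The other subtlety is that \'etaleness of \emph{all} reductions mod (powers of) $I$, not just the first one, is what the definition literally demands — but since \'etaleness ascends along nilpotent thickenings of the base, the first reduction (mod $\mathfrak{m}$) already forces all the others, so checking mod $\mathfrak{m}$ suffices and the argument above is complete. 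I would also remark that flatness of $F$ over $\operatorname{Spec} R$ is genuinely used: without it, \'etaleness of the special fiber would not propagate to \'etaleness of $F$ near $Z'$, which is presumably why the hypotheses include flatness over $\operatorname{Spec} R$.
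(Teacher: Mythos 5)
Your forward direction is essentially fine: modulo (powers of) an ideal of definition $I$ of $\mathfrak{Y}=\hat{Y}_Z$, the reduction of $f$ is an open subscheme of the base change of $F$ along the thickening $\spec(\mathcal{O}_Y/I^n)$ (this is where openness of $Z'$ in $F^{-1}(Z)$ enters), so \'etaleness of $F$ near $Z'$ passes to all reductions. The gap is in the converse. You ``choose $I=\mathfrak{m}$ (pushed forward)'' and claim the corresponding reduction is $F_s\colon X_s\to Y_s$ restricted near $Z'$. Both halves of this fail in the genuinely special (non--topologically-finite-type) case, which is the case the lemma is for: first, $\mathfrak{m}\mathcal{O}_\mathfrak{Y}$ is in general \emph{not} an ideal of definition of $\hat{Y}_Z$ when $Z\subsetneq Y_s$ --- e.g.\ for $\hat{Y}_Z=\spf R\{T\}[[S]]$ the ideals of definition contain $(\pi,S)$, and $(\pi)$ is not open --- so the adically \'etale hypothesis says nothing about the mod-$\mathfrak{m}$ reduction; second, even formally, $\mathcal{O}_\mathfrak{X}/\mathfrak{m}\mathcal{O}_\mathfrak{X}$ is a completion of $X_s$ along $Z'$ (think $k\{T\}[[S]]$), not the restriction of $\mathcal{O}_{X_s}$ to a neighborhood of $Z'$. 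So \'etaleness of $F_s$ at points of $Z'$ is not among your hypotheses; producing it is essentially the content of the lemma, and your subsequent fibrewise-criterion and Nakayama steps are built on it. Relatedly, the closing remark that ``checking mod $\mathfrak{m}$ suffices since \'etaleness ascends along nilpotent thickenings'' is beside the point: the ideals of definition of a special formal scheme are not nilpotent modulo $\mathfrak{m}$.

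What the hypothesis actually gives you is \'etaleness of the maps of $I_Z$-adic thickenings of $Z'$ over those of $Z$, and you must bridge from that to \'etaleness of $F$ at $x\in Z'$. The paper does this at the level of local rings: $\mathcal{O}_{\mathfrak{X},x}$ and $\mathcal{O}_{X,x}$ have the same completion, algebraizability makes the relevant map of finite type (not merely topologically of finite type), and \'etaleness of a finite-type map of noetherian local rings can be checked after faithfully flat completion, i.e.\ on the reductions modulo the ideal of definition. Alternatively, your skeleton can be repaired without completions by using the correct ideal: unramifiedness at $x$ follows by Nakayama from $\Omega_{X/Y}\otimes\kappa(x)=0$, which you can read off the first $I_Z$-adic thickening; and flatness of $\mathcal{O}_{Y,F(x)}\to\mathcal{O}_{X,x}$ follows from flatness of all the reductions mod $I_Z^n$ via the local (infinitesimal) criterion of flatness along the $I_Z$-adic filtration, noting that a noetherian local module is ideally separated --- not via the fibrewise criterion over $R$, whose input (flatness of the special-fibre map at $x$) you do not have. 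As written, the proof has a genuine gap at exactly the step the lemma is meant to supply.
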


\begin{proof}
The key idea here is that $\mathcal{O}_{\mathfrak{X},x}$ and $\mathcal{O}_{X,x}$ have the same completions as local rings. The algebraizability of the schemes implies that the algebra $\mathcal{O}_{\mathfrak{X},x} \to \mathcal{O}_{\mathfrak{Y},f(x)}$ is of finite type, and is not just topologically of finite type. We deduce that $\mathcal{O}_{X,x} \to \mathcal{O}_{Y,F(x)}$ is \'etale if and only if the map $\mathcal{O}_{\mathfrak{X},x} \to \mathcal{O}_{\mathfrak{Y},f(x)}$ is. We only must show that being adically \'etale and being of finite type implies formally \'etale for local rings. But we can check the \'etaleness of a finite type morphism of local rings on the special fiber, which being adically \'etale implies.
\end{proof}

As a corollary, an \'etale morphism of schemes $Y \rightarrow X$ induces an adically \'etale morphism $\mathfrak{Y}\rightarrow\mathfrak{X}$. If we enrich these schemes by giving them strict log structures over $\spec R$, a natural question is if a log \'etale or kummer \'etale morphism completes to an adically log \'etale or adically kummer \'etale morphism.

\begin{proposition}
The completion of a log \'etale morphism $f : X \rightarrow Y$ of coherent log schemes $X$ and $Y$ which are individually strict over $\spec R$ gives an adically log \'etale morphism $f : \mathfrak{X} \rightarrow \mathfrak{Y}$ where both special formal schemes are endowed with their canonical log structures.
\end{proposition}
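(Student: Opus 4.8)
The plan is to reduce to the case of a \emph{strict} morphism of log schemes --- where being log \'etale coincides with being classically \'etale --- to complete via the corollary to Lemma~\ref{formal-etale-structure-lemma}, and then to observe that the canonical log structures make the completed morphism strict again, hence adically log \'etale. First I would unpack the strictness hypothesis: give $\spec R$ its standard log structure $\mathcal{M}_R$, associated to the chart $\mathbb{N}\to R$, $1\mapsto\pi$, and write $g:X\to\spec R$, $h:Y\to\spec R$ for the structure morphisms (which we take to be flat and of finite type, as is needed for the completions to be special formal schemes and for Lemma~\ref{formal-etale-structure-lemma} to apply). That $X$ and $Y$ are strict over $\spec R$ means $\mathcal{M}_X\cong g^{*}\mathcal{M}_R$ and $\mathcal{M}_Y\cong h^{*}\mathcal{M}_R$; since $g=h\circ f$, functoriality of pullback log structures gives $\mathcal{M}_X\cong f^{*}h^{*}\mathcal{M}_R\cong f^{*}\mathcal{M}_Y$, so $f$ is itself strict. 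A strict morphism of fine log schemes is log \'etale if and only if its underlying morphism of schemes is \'etale (take the common chart $\mathbb{N}$ for both log structures and apply the chart criterion of \cite{kato1989logarithmic}), so $f:X\to Y$ is classically \'etale. Completing $Y$ along its special fibre, or a subscheme thereof, and $X$ along the preimage, the corollary to Lemma~\ref{formal-etale-structure-lemma} then shows that $\hat f:\mathfrak{X}\to\mathfrak{Y}$ is adically \'etale on underlying special formal schemes, and it is clearly topologically of finite type.

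The main step is to check that $\hat f$ is \emph{strict} for the canonical log structures, i.e.\ that the natural map $\hat f^{*}\mathcal{M}_{\mathfrak{Y}}^{can}\to\mathcal{M}_{\mathfrak{X}}^{can}$ is an isomorphism. The key point is that $\mathcal{M}_{\mathfrak{Y}}^{can}$ is the sub-monoid sheaf of $\mathcal{O}_{\mathfrak{Y}}$ on the adically \'etale site cut out by the uniform condition of being invertible on the generic fibre: $\mathcal{M}_{\mathfrak{Y}}^{can}(\mathfrak{V}\to\mathfrak{Y})=\{\,a\in\mathcal{O}_{\mathfrak{V}}(\mathfrak{V}) : a|_{\mathfrak{V}_\eta}\ \text{invertible}\,\}$. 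By Proposition~2.1 of \cite{berkovich1996vanishing} the adically \'etale site of $\mathfrak{X}$ is the localization at $\mathfrak{X}$ of that of $\mathfrak{Y}$ --- a composite $\mathfrak{U}\to\mathfrak{X}\to\mathfrak{Y}$ of adically \'etale maps is adically \'etale, and such maps cancel --- so $\hat f^{-1}$ is restriction and identifies $\hat f^{-1}\mathcal{O}_{\mathfrak{Y}}$ with $\mathcal{O}_{\mathfrak{X}}$. Since the generic fibre $\mathfrak{U}_\eta$ of an object $\mathfrak{U}$ does not depend on whether $\mathfrak{U}$ is regarded over $\mathfrak{X}$ or over $\mathfrak{Y}$, the sheaf $\hat f^{-1}\mathcal{M}_{\mathfrak{Y}}^{can}$ coincides, as a sub-monoid sheaf of $\mathcal{O}_{\mathfrak{X}}$, with $\mathcal{M}_{\mathfrak{X}}^{can}$; in particular it is already a log structure, so associating a log structure changes nothing and $\hat f^{*}\mathcal{M}_{\mathfrak{Y}}^{can}=\mathcal{M}_{\mathfrak{X}}^{can}$. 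This reconciles the apparent change of log structure in the statement: although $\mathcal{M}_{\mathfrak{X}}^{can}$ is in general strictly larger than the completion of $\mathcal{M}_X$, it is still pulled back from $\mathfrak{Y}$.

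To conclude, a strict morphism whose underlying map of formal schemes is adically \'etale is adically log \'etale: by definition it is enough to check that modulo each ideal of definition $\mathfrak{I}$ of $\mathfrak{Y}$ --- say $\mathfrak{I}=(\pi^{n})$ --- the reduction $\bar f$ is classically log \'etale, and indeed $\bar f$ is the reduction of a classically \'etale morphism, hence classically \'etale, and is strict for the restricted log structures (restriction of a strict morphism along a closed immersion is strict, and the two reduction squares are compatible), hence classically log \'etale. One also needs $(\mathfrak{X},\mathcal{M}_{\mathfrak{X}}^{can})$ and $(\mathfrak{Y},\mathcal{M}_{\mathfrak{Y}}^{can})$ to be fine vertical log special formal schemes: verticality of the canonical log structure was noted earlier, and coherence holds under the standing hypotheses --- for a normal flat finite-type $R$-scheme strict over $\spec R$, such as the regular strictly semistable models to which this is applied, the monoid of principal ideals containing a power of $\pi$ is a finitely generated submonoid of some $\mathbb{N}^{r}$.

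The step I expect to cause the most friction is the identification $\hat f^{*}\mathcal{M}_{\mathfrak{Y}}^{can}\cong\mathcal{M}_{\mathfrak{X}}^{can}$ together with this last bookkeeping: one must be careful that the canonical log structure and the generic-fibre construction really are compatible with restriction along adically \'etale maps --- which rests on Berkovich's identification of the adically \'etale topos with the \'etale topos of the special fibre --- and that $\mathcal{M}_{\mathfrak{X}}^{can}$ is genuinely fine, so that $\mathfrak{X}$ and $\mathfrak{Y}$ lie in the category in which ``adically log \'etale'' is defined.
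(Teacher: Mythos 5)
Your argument is correct for the statement as literally written, but it takes a genuinely different route from the paper's. The paper never uses the strictness-over-$\spec R$ hypothesis: it invokes Kato's chart criterion for log \'etaleness, so that locally there are charts $Q \to P$ with $X \to Y\times_{\spec\mathbb{Z}[Q]}\spec\mathbb{Z}[P]$ classically \'etale, observes that this fibered product agrees with $Y\times_{\spec R[Q]}\spec R[P]$ and that its completion is $\mathfrak{Y}\widehat{\times}_{\spf R\{Q\}}\spf R\{P\}$, and then applies the corollary of Lemma~\ref{formal-etale-structure-lemma} to conclude that $\mathfrak{X}\to\mathfrak{Y}\widehat{\times}_{\spf R\{Q\}}\spf R\{P\}$ is adically \'etale, i.e.\ it produces an adic chart factorization witnessing log \'etaleness of each reduction. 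You instead note that strictness of $X$ and $Y$ over $\spec R$ forces $f$ itself to be strict, hence classically \'etale, and then do the bookkeeping the paper leaves implicit: an adically \'etale morphism is automatically strict for the \emph{canonical} log structures, because inverse image along the localization of small adically \'etale sites is restriction and ``invertible on the generic fiber'' is intrinsic to the object, and a strict morphism that is adically \'etale is adically log \'etale. What each approach buys: yours is more elementary and squarely addresses the canonical-log-structure clause in the statement, which the paper's chart argument silently elides (it implicitly identifies the completed chart log structures with the canonical ones); the paper's chart argument, on the other hand, does not collapse under the strictness hypothesis and is the form of the statement actually needed later, where the charts genuinely differ (compare Lemma~\ref{finitekummerlemma}).

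The one soft spot, which you flag yourself, is coherence (fineness) of the canonical log structures: this is needed for $\mathfrak{X}$ and $\mathfrak{Y}$ to lie in the category of fine vertical log special formal schemes in which ``adically log \'etale'' is defined, and your sketch of it assumes normality, which is not among the hypotheses. Since the paper's own proof does not verify this either, it is not a defect of your route specifically, but it should be recorded as an extra hypothesis or verified in the log regular situations where the proposition is applied.
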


\begin{proof}
By the above observation, we can immediately reduce to the case where $X$ and $Y$ have charts $P$ and $Q$ respectively, in which case we must merely note that $Y \times_{\spec \mathbb{Z}[Q]}\spec \mathbb{Z}[P]$ is canonically isomorphic to $Y \times_{\spec R[Q]} \spec R[P]$, the completion of which is  exactly $Y \widehat{\times}_{\spf R\{Q\}} \spf R\{P\}$. Thus if the map $X \rightarrow Y \times_{\spec \mathbb{Z}[Q]} \spec \mathbb{Z}[P]$ is classically \'etale, then the induced map $\mathfrak{X} \rightarrow Y \widehat{\times}_{\spf R[Q]} \spf R[P]$ is adically \'etale.
\end{proof}

The following lemma will be important in a later comparison theorem as it allows us to pass between the formalisms of log schemes and of log formal schemes.

\begin{lemma}\label{finitekummerlemma}
Let $(\mathfrak{X},M) = \spf A$ and $(\mathfrak{Y},N)=\spf B$ be the formal spectra of special $R$-algebras with fine vertical log structures. Assume that they are given global charts $\mathscr{P} \rightarrow A$ and $\mathscr{Q} \rightarrow B$. Further assume that $B$ is a finite $A$-algebra. Then a morphism of log formal schemes $f: \mathfrak{Y} \rightarrow \mathfrak{X}$ is adically kummer \'etale if and only if the natural map $F$ from $X = \spec A$ to $Y = \spec B$ is kummer \'etale as a map locally of finite type of fine noetherian log schemes.
\end{lemma}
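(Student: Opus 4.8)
The plan is to reduce both the algebraic and the formal assertion to the local structure theorem for (adically) kummer \'etale morphisms, and then to match the two descriptions using the fact --- already exploited in the proof of the previous Proposition --- that monoid algebras commute with adic completion. Work \'etale-locally on the common closed fibre $X_s$; by the structure theorem, after refining the given charts $\mathscr{P}\to A$, $\mathscr{Q}\to B$ if necessary, one obtains a Kummer homomorphism of fine monoids $P'\hookrightarrow Q'$ whose cokernel on groups is finite of order invertible on the base, such that $F$ (resp.\ $f$) is kummer \'etale (resp.\ adically kummer \'etale) precisely when the residual map
\[ g : Y \longrightarrow X\times_{\spec\mathbb{Z}[P']}\spec\mathbb{Z}[Q'] \]
is classically \'etale (resp.\ when its formal completion $\hat g$ is adically \'etale). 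Since $A$ is $\mathfrak{a}$-adically complete for an ideal of definition $\mathfrak{a}$, and every object in sight is the formal completion along $V(\mathfrak{a})$ of an honest noetherian scheme, the same computation as in the proof of the previous Proposition gives
\[ \mathfrak{X}\,\widehat{\times}_{\spf R\{P'\}}\,\spf R\{Q'\}\;=\;\big(X\times_{\spec\mathbb{Z}[P']}\spec\mathbb{Z}[Q']\big)^{\wedge}, \]
so that $\hat g$ is indeed the formal completion of $g$. The monoid condition on $P'\hookrightarrow Q'$ is literally the same in both settings, so the lemma reduces to the single statement: \emph{$g$ is classically \'etale if and only if $\hat g$ is adically \'etale}.

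To prove this, I would first observe that $g$ is finite, because $B$ is finite over $A$ and hence over $A\otimes_{\mathbb{Z}[P']}\mathbb{Z}[Q']$, which is noetherian since $\mathbb{Z}[Q']$ is of finite type over $\mathbb{Z}[P']$. Thus $\hat g$ is the completion of an algebraizable finite morphism of noetherian schemes, and the argument of Lemma \ref{formal-etale-structure-lemma} --- comparing completed local rings and checking \'etaleness of a finite-type map of local rings on the special fibre --- shows that $\hat g$ is adically \'etale if and only if $g$ is classically \'etale \emph{on a neighbourhood of the closed fibre} $Y_s$. It then remains to upgrade ``\'etale near $Y_s$'' to ``\'etale everywhere'', and this is where finiteness is indispensable: since $B$ is a finite module over the $\mathfrak{a}$-adically complete ring $A$, it is $\mathfrak{a}B$-adically complete, so $1+\mathfrak{a}B\subseteq B^{\times}$ and $\mathfrak{a}B$ lies in the Jacobson radical of $B$. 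Hence every nonempty closed subset of $\spec B$ meets $V(\mathfrak{a}B)=F^{-1}(V(\mathfrak{a}))$, and since the non-\'etale locus of a finite-type morphism is closed, \'etaleness of $g$ on a neighbourhood of $Y_s$ forces \'etaleness of $g$ everywhere. (Without finiteness this fails: over $A=\mathbb{Z}_p$ the uniformizer $p$ does not lie in the Jacobson radical of $\mathbb{Z}_p[x]$.) The reverse implication is immediate, because ``adically kummer \'etale'' is by definition a condition on $f$ modulo ideals of definition, i.e.\ along the closed fibre, and the \'etale-local chart refinements used above take place on the common closed fibre $Y_s=\mathfrak{Y}_s$.

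The main obstacle is exactly this reconciliation of scopes: ``adically kummer \'etale'' constrains $f$ only along the closed fibre of $\mathfrak{Y}$, whereas ``kummer \'etale'' must hold at every point of $\spec B$, including the many points lying over the generic fibre. Finiteness of $B$ over $A$ is the precise hypothesis that bridges this gap, through the Jacobson-radical observation above; everything else is bookkeeping with charts and with the behaviour of monoid algebras under $\mathfrak{a}$-adic completion.
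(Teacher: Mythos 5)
Your proposal follows essentially the same route as the paper's proof: use the chart criterion to reduce adically kummer \'etale versus kummer \'etale to the residual map into the fiber product, use finiteness of the monoid extension so that the completed tensor product agrees with the ordinary one, and then compare classical \'etaleness with adic \'etaleness of the completion as in Lemma \ref{formal-etale-structure-lemma}. Your additional step --- that since $B$ is finite over the complete ring $A$ the ideal of definition lies in the Jacobson radical of $B$, so \'etaleness near the closed fibre propagates to all of $\spec B$ --- is a correct and welcome elaboration of a point the paper's final sentence leaves implicit.
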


\begin{proof}
In such a chart, being adically kummer is that the map of monoids $P \rightarrow Q$ itself is kummer and that
\[ \mathfrak{Y} \rightarrow \mathfrak{X} \widehat{\times}_{\spf R\{P\}} \spf R\{Q\} \]
is adically \'etale. However, $\spf R\{Q\}$ is a finite $\spf R\{P\}$ algebra, as we will see in the proof of the next proposition. This implies that the above completed tensor product is the usual tensor product. Thus the map $Y \rightarrow X$ factors through $X \times_{\spec R[P]} \spec R[Q]$ and the map to this fibered product is \'etale if and only if its completion is formally \'etale.
\end{proof}

Now we can come to the machinery of passing between adically kummer \'etale topos and the quasi-\'etale topos. This proposition is essentially the same as one in \cite{berkovichcomplex}

\begin{proposition}
The generic fiber of an adically kummer \'etale morphism $f : \mathfrak{X} \rightarrow \mathfrak{Y}$ between log special formal schemes with vertical log structures is quasi-\'etale.

This gives a morphism of sites from the adically kummer \'etale site of $\mathfrak{X}$ to the quasi-\'etale site of the associated generic fiber, $\site (\mathfrak{X}_{ak\acute{e}t}) \rightarrow \site(\mathfrak{X}_{\eta,q\acute{e}t})$, inducing a map of topoi $\topos(\mathfrak{X}_{\eta,q\acute{e}t}) \rightarrow \topos(\mathfrak{X}_{ak\acute{e}t})$ 
\end{proposition}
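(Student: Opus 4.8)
The plan is to establish the geometric statement first — that the generic fibre of an adically kummer \'etale morphism between vertical log special formal schemes is quasi-\'etale — and then deduce the assertion about sites and topoi formally from the standard machinery for continuous, left-exact functors between sites.

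For the geometric statement I would argue locally. Quasi-\'etaleness may be checked after passing to an affinoid cover of the source and an open cover of the target, and an open cover of a special formal scheme induces an admissible cover of its generic fibre; so I may assume $\mathfrak{X} = \spf A$ and $\mathfrak{Y} = \spf B$ are affine with global fine charts $\mathscr{P}\to A$ and $\mathscr{Q}\to B$ for their vertical log structures. By the adic analogue of Kato's structure theorem for kummer \'etale morphisms (the local structure employed in \cite{berkovichcomplex}, compare also Lemma \ref{finitekummerlemma}), after a further localisation $f$ factors as
\[ \mathfrak{Y} \longrightarrow \mathfrak{X}\,\widehat{\times}_{\spf R\{P\}} \spf R\{Q\} \longrightarrow \mathfrak{X}, \]
where $P\to Q$ is a kummer homomorphism of fine monoids — with torsion index invertible in $R$, since $f$ is kummer \emph{\'etale} — and the first arrow is adically \'etale and strict. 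The generic fibre functor commutes with completed fibre products of special formal schemes over $\spf R$ (it is computed by inverting $\pi$ and completing, which commutes with completed tensor products), so it suffices to treat the two arrows separately: composites and base changes of quasi-\'etale morphisms are again quasi-\'etale by \cite{berkovich1994vanishing}.

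For the first arrow, the generic fibre of an adically \'etale morphism of special formal schemes is \'etale; this is the formal-to-analytic compatibility of \'etale morphisms underlying Berkovich's vanishing-cycle formalism, and follows from Lemma \ref{formal-etale-structure-lemma} together with the fact that an \'etale morphism of affinoid algebras analytifies to an \'etale morphism. For the base-change arrow, verticality is exactly what is needed: since $\mathcal{M}_\mathfrak{X}$ is vertical its localisation along $R\setminus\{0\}$ is a sheaf of groups, hence the log structure induced on $\mathfrak{X}_\eta$ is trivial and, after shrinking the chart, every element of $P$ maps to a unit of the structure sheaf of $\mathfrak{X}_\eta$. Thus $\mathfrak{X}_\eta\to(\spf R\{P\})_\eta$ factors through the open locus of $(\spf R\{P\})_\eta$ where all monomials from $P$ are invertible, and over that locus the finite morphism $(\spf R\{Q\})_\eta\to(\spf R\{P\})_\eta$ is finite \'etale because $Q^{\mathrm{gp}}/P^{\mathrm{gp}}$ has order invertible in $R$. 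Base changing along $\mathfrak{X}_\eta\to(\spf R\{P\})_\eta$ shows the generic fibre of the base-change arrow is finite \'etale, a fortiori quasi-\'etale; composing with the first arrow, $\mathfrak{Y}_\eta\to\mathfrak{X}_\eta$ is quasi-\'etale. Finally, to promote $\mathfrak{Y}\mapsto\mathfrak{Y}_\eta$ to a morphism of sites: it sends objects of $\site\mathfrak{X}_{ak\acute{e}t}$ to objects of $\site\mathfrak{X}_{\eta,q\acute{e}t}$ by what was just shown; it is left exact since the generic fibre functor preserves fibre products; and it is continuous, because adically kummer \'etale morphisms are flat and compatible with the reduction map, so a jointly surjective family of them has jointly surjective (hence covering) family of generic fibres. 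A continuous left-exact functor of sites induces a morphism of topoi in the standard way (see e.g.\ \cite{stacks-project}), in the stated direction $\topos(\mathfrak{X}_{\eta,q\acute{e}t})\to\topos(\mathfrak{X}_{ak\acute{e}t})$, with inverse image the sheafification of the left Kan extension of restriction along $(-)_\eta$.

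The main obstacle is the base-change step: making precise, in the special (not topologically finite type) formal setting, that verticality genuinely forces the kummer coordinates to become invertible on the generic fibre and that the resulting finite cover is \'etale there — which is also where one must control the kummer index against the residue characteristic (automatic in mixed characteristic, and part of the definition of kummer \'etale otherwise). The compatibility of the generic fibre functor with completed fibre products and with the reduction map in this generality, and the verification that jointly surjective families are preserved, are routine but need to be checked with care against \cite{berkovichcomplex}.
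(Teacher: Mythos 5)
Your overall route is the same as the paper's: reduce adically \'etale locally to the case of global fine charts, factor $f$ via Kato's chart criterion into a strict adically \'etale arrow followed by the base change of the kummer map of monoid algebras $\spf R\{Q\} \to \spf R\{P\}$, treat the two arrows separately on generic fibers, and then obtain the morphism of sites and topoi by the standard continuity and left-exactness formalism. Your treatment of the second arrow is in fact more explicit than the paper's: the paper simply asserts that the induced map of affinoid algebras over $K$ is \'etale, whereas you isolate where verticality enters, namely that on the generic fiber the chart monomials become invertible, so the kummer cover is finite \'etale over the relevant locus; that is a genuine gain in precision.

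There is, however, one incorrect intermediate claim: the generic fiber of an adically \'etale morphism of special formal schemes is in general only quasi-\'etale, not \'etale. For example $\spf R\{T,T^{-1}\} \to \spf R\{T\}$ is adically \'etale (on the fibers over $R/\pi^n$ it is the open immersion $\mathbb{A}^1 \setminus \{0\} \subset \mathbb{A}^1$), but its generic fiber is the embedding of the affinoid annulus $\{|T|=1\}$ into the closed unit disc, which is not open and hence not \'etale, only quasi-\'etale. Moreover you cannot invoke Lemma \ref{formal-etale-structure-lemma} at this point, since its hypotheses require the morphism to be algebraizable (a completion of a morphism of finite type flat $R$-schemes), which is not part of the hypotheses of this proposition. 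The repair is exactly what the paper does and is all your argument actually uses downstream: cite the fact from \cite{berkovich1996vanishing} that the generic fiber functor takes adically \'etale morphisms of special formal schemes to quasi-\'etale morphisms, and then conclude, as you do, that the composite is quasi-\'etale since quasi-\'etale morphisms are stable under composition and base change. With that substitution (and noting the harmless swap of source and target in your factorization, the proposition's morphism being $f : \mathfrak{X} \rightarrow \mathfrak{Y}$), your argument coincides with the paper's proof.
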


\begin{proof}
The generic fiber of an adically \'etale morphism is quasi-\'etale as a morphism of non-archimedean analytic spaces, and being quasi-\'etale is local for the quasi-\'etale topology. We may then reduce the problem adically \'etale locally on both the base and source. This allows us to assume that both $\mathfrak{X}$ and $\mathfrak{Y}$ are given charts $P$ and $Q$ respectively and that we can factor the map through the pull-back

\begin{center}
\begin{tikzcd}
\mathfrak{X} \arrow[d, "f'"] & \\
\mathfrak{Y}\hat{\times}_{\spf R\{Q\}} \spf R\{P\} \arrow[r] \arrow[d, "p"] & \spf R \{ Q \} \arrow[d] \\
\mathfrak{Y} \arrow[r] & \spf R\{ P \}
\end{tikzcd}
\end{center}

The map  $f'$ is adically \'etale, and so its generic fiber is quasi-\'etale. The map $p$ will have \'etale generic fiber, simply because the algebra map of affinoid algebras $K \otimes_R R\{Q\} \to K \otimes_R R\{P\}$ is \'etale. The composition is then still quasi-\'etale.
\end{proof}

The above induces a map of sites which induces a map on $\notp$-profinite completions of the shapes of the associated $\infty$-topoi,
\[ \profqettoptype_{\notp} \mathcal{X} \rightarrow \proffkettoptype_{\notp} \mathfrak{X} \]
which we will study.

\subsection{The comparison theorem}

Let $\mathfrak{X}$ be an algebraizably log smooth fs log formal scheme. We can zariski-locally cover $\mathfrak{X}$ by special formal affines, that is by open formal subschemes of the form $\spf A$ for some special adic $R$-algebra $A$. Furthermore, we can work adically \'etale locally to assume that the algebra $A$ is the completion of an affine log scheme $V= \spec S$ flat and finite type  $\spec R$ along some closed subset of the special fiber $Z \subset V_s$. We can further assume that $V/\spec R$ is a fine vertical log smooth morphism, and we can further assume that the log structure on both $V$ and $\mathfrak{X}$ is given by a chart $P \rightarrow \mathcal{O}_\mathfrak{X}=A$.

Write $X=\spec A$ for the affine noetherian log scheme given by the commutative ring $A$ along with its log structure induced by $P \rightarrow A = \mathcal{O}_X$.

We begin with the following theorem.

\begin{theorem}
The scheme $X$ is log regular.
\end{theorem}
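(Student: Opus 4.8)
The plan is to reduce the statement to Kato's criterion for log regularity of a fine saturated log scheme: a noetherian fs log scheme $X$ is log regular at a point $x$ if and only if $\mathcal{O}_{X,x}/I_x$ is regular (where $I_x$ is the ideal generated by $\alpha(\mathcal{M}_x \setminus \mathcal{O}_{X,x}^*)$) and $\dim \mathcal{O}_{X,x} = \dim \mathcal{O}_{X,x}/I_x + \rank \overline{\mathcal{M}}_{x}^{\mathrm{gp}}$. Since $X = \spec A$ with $A$ the completion along $Z \subset V_s$ of the log scheme $V = \spec S$, which is fine vertical log smooth over $\spec R$, the first move is to transport log regularity of $V$ to $X$. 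The point is that $A$ is obtained from $S$ by completion along an ideal, so at a point $x \in X$ lying over $v \in V$ the local rings $\mathcal{O}_{X,x}$ and $\mathcal{O}_{V,v}$ have the same completion (this is precisely the observation used in Lemma \ref{formal-etale-structure-lemma}), hence the same dimension, and the chart $P \to A$ is pulled back from the chart $P \to S$, so $\overline{\mathcal{M}}_x$ and the ideal $I_x$ are likewise compatible with completion. Since regularity and the dimension formula are insensitive to completion, $X$ is log regular at $x$ if and only if $V$ is log regular at $v$.

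So it remains to see that $V$ is log regular. Here I would invoke the fact that $R$ is a complete discrete valuation ring, which with the canonical (equivalently, the standard log point) log structure $N \to R$, $1 \mapsto \pi$, is log regular — indeed $\spec R$ with this log structure is the standard example of a log regular scheme of the relevant type, the dimension formula reading $1 = 0 + 1$. Then I would use that log smoothness preserves log regularity: if $\phi : V \to \spec R$ is fine (vertical) log smooth and the base is log regular, then the total space is log regular. This is Kato's theorem (Theorem 8.2 of \cite{kato1989logarithmic}), the log analogue of ``smooth over regular is regular''. Combining, $V$ is log regular, hence so is $X$.

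The main obstacle I anticipate is the bookkeeping in the first step: verifying carefully that completion along $Z$ does not disturb the log structure in a way that breaks Kato's criterion — specifically, that the chart $P \to A$ really is the one induced from $P \to S$ (this is part of our standing assumptions on the local model, so it should be immediate), and that the comparison of $I_x \mathcal{O}_{X,x}$ with $I_v \mathcal{O}_{V,v}$ is faithfully flat descent/ascent of regularity along $\mathcal{O}_{V,v} \to \mathcal{O}_{X,x} \to \widehat{\mathcal{O}}_{V,v}$. A secondary point to be careful about is that ``algebraizably log smooth'' only guarantees log smoothness of $\phi$ \emph{on a neighborhood of} $Z$, not globally on $V$; but log regularity is a local condition, so it suffices to work at points of $X$, which all lie over $Z$, and there $\phi$ is genuinely log smooth. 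Modulo these checks the proof is short, resting entirely on Kato's criterion plus the fact that completion is harmless.
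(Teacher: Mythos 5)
Your proposal is correct and follows essentially the same route as the paper: compare Kato's criterion at a point of $X=\spec A$ with the same criterion for the completed local ring, identify that completion with the one coming from $V=\spec S$ via the chart $P$, and conclude because $S$ is log smooth over the log regular base $\spec R$. The one point to tighten is your claim that ``all points of $X$ lie over $Z$'' --- this holds only for closed points (the ideal of $Z$ lies in the Jacobson radical of the complete ring $A$), and the paper makes the reduction precise by citing Kato's Proposition 7.1, which says log regularity need only be checked at closed points; these are in bijection with the closed points of $\mathfrak{X}$ and do lie over $Z$, where your completion argument applies.
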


\begin{proof}
By Proposition 7.1 of \cite{kato1994toric}, it is enough to check along the closed points of $X$. These are in bijection with the closed points of $\mathfrak{X}$. For a closed point $x \in X$, we can now study the local ring $\mathcal{O}_{X,x}$. This local ring admits an ideal $I_x$ generated by $M_x \setminus \mathcal{O}_{X,x}^*$. In the local case, we can also describe $I_x$ in terms of the chart, as the ideal $I_x = (P \setminus P \cap \mathcal{O}_{X,x}^*) \mathcal{O}_{X,x}$. For $X$ to be log regular at $x$, we want the local ring $\mathcal{O}_{X,x}/I_x$ to be regular and to have the following equality hold.
\[ \dimension \mathcal{O}_{X,x} = \dimension \mathcal{O}_{X,x}/I_x + \rank_\mathbb{Z} (M^{gp})_x \setminus\mathcal{O}_{X,x}^*  \]
The left-hand side is always bounded above by the right hand side. We claim that this equality holds if and only if the following equality does.
\[ \dimension \hat{\mathcal{O}}_{X,x} = \dimension \hat{\mathcal{O}}_{X,x}/J_x + \rank_\mathbb{Z} (N^{gp})_x\setminus\hat{\mathcal{O}}_{X,x}^* \]
Here $N$ is the log structure induced on the ring $\hat{\mathcal{O}}_{X,x}$ by the composition $P \rightarrow \mathcal{O}_{X,x} \rightarrow \hat{\mathcal{O}}_{X,x}$, and $J_x$ is the ideal generated by $N_x \setminus \hat{\mathcal{O}}_{X,x}^*$. As above, this admits a description as $J_x = (P \setminus P \cap \hat{\mathcal{O}}_{X,x}^*) \hat{\mathcal{O}}_{X,x}$, but because the map $\mathcal{O}_{X,x} \rightarrow \hat{\mathcal{O}}_{X,x}$ is injective on units, we see that the two sets of generators are equal
\[ P \setminus P \cap \hat{\mathcal{O}}_{X,x}^* =  P \setminus P \cap \mathcal{O}_{X,x}^* \]
Thus $I_x \hat{\mathcal{O}}_{X,x} = J_x$. This implies that the completion of $\mathcal{O}_{X,x}/I_x$ is isomorphic to $\hat{\mathcal{O}}_{X,x}/J_x$, and thus their dimensions are equal.

We are then reduced to worrying about the rank term. We see that the stalk of $M^{gp}$ at $x$ must be the groupification of $M_x$ as groupification preserves colimits. Since both $N$ and $M$ are log structures associated with the same pre-log structure, we see that the two abelian groups are isomorphic.

The second equality holds by applying the above argument to the spectrum of the ring $S$ along the point corresponding to $x$, noting that $S$ with the log structure induced by $P$ is log regular, as it is log smooth over a log regular base.
\end{proof}

Now we can state the local comparison theorem.

\begin{theorem}
\label{local-comparison-berkovich-log}
Let the assumptions on $\mathfrak{X}$ be the local ones introduced at the very beginning of this subsection. The $\notp$-profinitely completed quasi-\'etale homotopy type of the strictly $K$-affinoid space $\mathcal{X}=\mathfrak{X}_\eta$ is equivalent to the $\notp$-profinitely completed adically kummer \'etale homotopy type of its formal model $\mathfrak{X}$. More explicitly We have a chain of equivalences of profinite spaces

\[ \profettoptype_\notp \mathcal{X} \simeq \profqettoptype_\notp \mathcal{X} \simeq \profettoptype_\notp X_K \simeq \profettoptype^{k\acute{e}t}_\notp X^{k\acute{e}t} \simeq \proffkettoptype _\notp \mathfrak{X}_{ak\acute{e}t} \]
\end{theorem}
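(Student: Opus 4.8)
The plan is to establish the chain one link at a time. Each of the four intermediate equivalences is a $\notp$-profinite shape equivalence, and apart from the first link (which is already available) we verify each using the torsor-and-cohomology criterion of Proposition \ref{notpprofiniteshapeequivalence}: across the relevant morphism of topoi it suffices to compare the $H^0$ of locally constant sheaves of finite sets, the \v{C}ech $\check H^1$ with coefficients in finite groups of order prime to $p$, and the cohomology $H^q(-,\mathscr{L})$ for $\mathscr{L}$ a locally constant sheaf of finite abelian $\ell$-groups with $\ell \neq p$. In practice this means we only ever need a comparison of finite \'etale (resp. kummer \'etale) covers together with a comparison of $\ell$-adic cohomology.

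The first link $\profettoptype_\notp \mathcal{X} \simeq \profqettoptype_\notp \mathcal{X}$ is immediate: a strictly $K$-affinoid space is compact, hence hausdorff and strictly $K$-analytic, so Corollary \ref{comparison-qet-et} applies, and we push through the colimit-preserving $\notp$-profinite completion functor. For the second link $\profqettoptype_\notp \mathcal{X} \simeq \profettoptype_\notp X_K$, the spectral map $\mathcal{X} = \mathfrak{X}_\eta \to X_K = \spec A_K$ (where $A_K = A[1/\pi]$) induces a morphism of topoi --- a quasi-\'etale cover of $\mathcal{X}$ is built from finite \'etale maps and affinoid subdomains, and Tate's acyclicity theorem shows the affinoid-subdomain part contributes nothing to cohomology or to the groupoid of torsors. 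One then checks the hypotheses of Proposition \ref{notpprofiniteshapeequivalence}: the comparison of finite \'etale covers is a GAGA statement for the affinoid (equivalently, a comparison of \'etale fundamental groups following de Jong), and the comparison of $\ell$-adic cohomology is Berkovich's comparison theorem, where compactness of $\mathcal{M}(A_K)$ is what lets us dispense with finiteness hypotheses on the non-finite-type scheme $\spec A_K$. One may equivalently route this link through the adic space $\mathcal{X}^{ad}$ using Theorem \ref{comparison-berkovich-adic}.

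The third link $\profettoptype_\notp X_K \simeq \profettoptype^{k\acute{e}t}_\notp X^{k\acute{e}t}$ is where the log regularity proved just above is used. Because the log structure on $X$ is fine, saturated and vertical, its restriction to $X_K = X[1/\pi]$ has $\overline{\mathcal{M}}$ a sheaf of groups, hence trivial by saturatedness, so $X_K$ is exactly the trivial locus of $X$. Kato--Nakayama log purity for the log regular scheme $X$ then identifies prime-to-$p$ finite kummer \'etale covers of $X$ with prime-to-$p$ finite \'etale covers of $X_K$ --- a cover of degree prime to $p$ is automatically tame along the boundary, all residue characteristics along the boundary being $p$, and log regularity supplies the extension over the boundary via Abhyankar's lemma --- and gives $H^q_{k\acute{e}t}(X,\mathscr{L}) \cong H^q_{\acute{e}t}(X_K,\mathscr{L})$ for $\mathscr{L}$ a locally constant sheaf of finite abelian $\ell$-groups with $\ell \neq p$ (absolute purity in the log regular setting), so Proposition \ref{notpprofiniteshapeequivalence} applies. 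Finally, for the fourth link $\profettoptype^{k\acute{e}t}_\notp X^{k\acute{e}t} \simeq \proffkettoptype_\notp \mathfrak{X}_{ak\acute{e}t}$ we exploit that $X = \spec A$ and $\mathfrak{X} = \spf A$ are built from the same ring $A$: after passing to the global chart $P \to A$ and the fibered-product presentation of (adically) kummer \'etale maps, Lemma \ref{finitekummerlemma} matches adically kummer \'etale $\spf B \to \spf A$ with finite $B$ to kummer \'etale $\spec B \to \spec A$, while Lemma \ref{formal-etale-structure-lemma} together with Berkovich's identification of the adically \'etale topos of $\mathfrak{X}$ with the \'etale topos of its closed fiber $\mathfrak{X}_s$ handles the purely \'etale direction; since finite (kummer) covers are cofinal for computing both $\check H^1$ and the cohomology of locally constant sheaves, the criterion of Proposition \ref{notpprofiniteshapeequivalence} is met, the algebraizability hypothesis being used to guarantee genuine (not merely topological) finite-type-ness.

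I expect the main obstacle to be the third link: extracting a clean log-purity statement identifying the $\notp$-completed kummer \'etale homotopy type of the log regular scheme $X$ with the \'etale homotopy type of its trivial locus $X_K$. This is the one genuinely geometric input, packaging Kato's theory of log regular schemes, Abhyankar's lemma, and absolute purity into a single homotopy-theoretic comparison; the remaining links are either quoted or reduce to the preparatory lemmas, although the second link also requires some care precisely because $\spec A_K$ is not of finite type over $K$.
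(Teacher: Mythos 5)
Your skeleton matches the paper's strategy closely (verify each link via Proposition \ref{notpprofiniteshapeequivalence}; Corollary \ref{comparison-qet-et} for the first link; log regularity plus Fujiwara--Kato--Nakayama log purity for the third; Lemma \ref{finitekummerlemma} for the fourth), but two of your cohomological comparisons rest on claims that do not hold as stated, and these are genuine gaps. For the second link, the cohomology comparison between $\mathcal{X}$ and $\spec A_K$ is \emph{not} Berkovich's comparison theorem: Theorem 3.1 of \cite{berkovich1995comparison} concerns schemes locally of finite type over $K$ and their analytifications, and $\spec A_K$ is not of finite type over $K$; compactness of $\mathcal{M}(A_K)$ does not repair this. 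The paper instead inserts the adic space $\mathcal{X}^{ad}$ (via Theorem \ref{comparison-berkovich-adic}) and invokes Huber's comparison (Corollary 3.2.2 of \cite{huber2013etale}) between $\spa(A_K,A_K^{\circ})$ and $\spec A_K$, which is stated for \emph{constant} coefficients; locally constant coefficients are then handled by passing to a finite \'etale cover trivializing the system and running Hochschild--Serre, a reduction credited to \cite{hansen-mo-answer}. Your proposal names neither of these inputs.

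The fourth link is the more serious problem. Your justification that ``finite (kummer) covers are cofinal for computing $\check H^1$ and the cohomology of locally constant sheaves'' is false as a general principle: finite covers detect torsors and $\pi_1$, but $H^q$ for $q\geq 2$ of a locally constant sheaf is not computed from finite covers alone (that would amount to an unproven $K(\pi,1)$ statement), so Lemma \ref{finitekummerlemma} plus Lemma \ref{formal-etale-structure-lemma} only give you conditions (1) and (2) of Proposition \ref{notpprofiniteshapeequivalence}, not (3). What is actually needed is a comparison of the kummer \'etale site of $X=\spec A$ with the adically kummer \'etale site of $\mathfrak{X}=\spf A$; when $\mathfrak{X}$ is topologically of finite type the paper gets this from Elkik's algebraization theorem (Th\'eor\`eme 7 of \cite{elkik1973solutions}), which identifies the (adically) kummer \'etale algebras on both sides and hence the topoi. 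Moreover the local hypotheses allow $\mathfrak{X}$ to be the completion of $V$ along a locally closed $Z\subsetneq V_s$, in which case $\mathfrak{X}$ need not be topologically of finite type and even Elkik does not apply directly; the paper then abandons the link-by-link comparison for cohomology and instead proves that the composite of links (2)--(5) is an isomorphism using Huber's comparison of the cohomology of the adic generic fiber with that of the formal scheme (Proposition 3.15 of \cite{huberfiniteness}, cf.\ Theorem 3.1 of \cite{berkovich1996vanishing}), deducing link (5) from the invertibility of the others. Your argument does not address this non-topologically-finite-type case at all, so as written the fourth equivalence is not established.
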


We will break the proof up into two pieces, one regarding the fundamental group and one regarding abelian cohomology.

\begin{proof}[Proof for fundamental pro-groupoids]
The first isomorphism of groupoids is an immediate consequence of Theorem \ref{comparison-berkovich-adic}. The second is Kiehl's theorem, and the third follows from a result of K. Fujiwara and K. Kato in an unpublished preprint. See Proposition B.7 of \cite{hoshi2009exactness} for a statement and proof. The fourth isomorphism follows by coherent descent of modules for affine noetherian formal schemes, plus the Lemma \ref{finitekummerlemma}.
\end{proof}

The cohomological case proven in more generality in \cite{berkovichcomplex}, and the method of proof given here is quite similar in spirit. The difference is that we pass through the formalism of adic spaces.

\begin{proof}[Proof for cohomology]
We must check that the cohomology of each topos with locally constant coefficients agrees with that of each other one. Let us explicitly write the chain of maps on the level of cohomology.
\[ H^i(\mathcal{X}_{\acute{e}t}) \overset{(1)}{\rightarrow} H^i(\mathcal{X}_{q\acute{e}t}) \overset{(2)}{\leftarrow} H^i(\mathcal{X}^{ad}_{\acute{e}t}) \overset{(3)}{\leftarrow} H^i(X_K^{\acute{e}t}) \overset{(4)}{\leftarrow} H^i(X_{k\acute{e}t}) \overset{(5)}{\leftarrow} H^i(\mathfrak{X}_{ak\acute{e}t}) \]
The reader should beware the adic space we added in the above list.

We first argue the case for constant coefficients. From left to right, the first equivalence follows from Corollary \ref{comparison-qet-et}. The second isomorphism is also a consequence of Theorem \ref{comparison-berkovich-adic}. The third isomorphism follows from Corollary 3.2.2 of \cite{huber2013etale} if the coefficients are constant. In general we base change along a finite \'etale cover on  which the system is constant, and use the Hochschild-Serre spectral sequence to deduce it for the original space $\mathfrak{X}$. This reduction was suggested in \cite{hansen-mo-answer}. The fourth isomorphism follows from the work of K. Fujiwara, K. Kato, and Ch. Nakayama. The fifth isomorphism is a bit more complicated, and we will argue for it in the next paragraph.

If $\mathfrak{X}$ is topologically of finite type, then it follows as the class of kummer \'etale algebras of the affine log scheme $X$ and the adically kummer \'etale algebras of the affine log formal scheme $\mathfrak{X}$ coincide by Th\'eor\`eme 7 of \cite{elkik1973solutions}. See also Theorem 3 of section 4 of \cite{hoshi2009exactness}. The last isomorphism then follows since the topoi are equivalent. If it is \emph{not} topologically of finite type, we must be slightly more careful. We will show that $\lambda = (3) \circ (4) \circ (5)$ is an isomorphism. Since both $(3)$ and $(4)$ are isomorphisms, $(5)$ must be as well. The group homomorphism $\lambda$ is induced by the generic fiber functor of topoi. If we are completing $\mathfrak{X}$ along some subscheme $Y \subset \tilde{\mathfrak{X}}$, to obtain $\mathfrak{Y} = \hat{\mathfrak{X}}_Y$ then we can apply the isomorphism of Proposition 3.15 of \cite{huberfiniteness} (cf. Theorem 3.1 of \cite{berkovich1996vanishing} as well) to conclude the cohomology of $\mathcal{X}^{ad}_{\acute{e}t}$ and that of $\mathfrak{X}_{ak\acute{e}t}$ are isomorphic.
\end{proof}

By applying Proposition \ref{notpprofiniteshapeequivalence}. Now we can globalize the result.

\begin{theorem}\label{globalcomparisonberklog}
Let $\mathfrak{X}$ be a locally noetherian separated special formal scheme with canonical log structure. Assume that $\mathfrak{X}$ is fine as a log formal scheme, and also algebraizably formally $R$-log smooth. Put $\mathcal{X}$ for its generic fiber considered as a $K$-analytic space. Then we have an equivalence of the $\notp$-profinite quasi-\'etale homotopy type of $\mathcal{X}$ and the $\notp$-profinite adically kummer-\'etale homotopy type of $\mathfrak{X}$.

\[ \profqettoptype_{\notp} \mathcal{X} \rightarrow \proffkettoptype_{\notp} \mathfrak{X} \]
\end{theorem}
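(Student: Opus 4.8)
The plan is to deduce the global statement from the local comparison Theorem \ref{local-comparison-berkovich-log} by a hypercover descent argument, exploiting that every homotopy-type functor in sight preserves colimits. First I would pin down the map in the statement: the morphism of sites $\site(\mathfrak{X}_{ak\acute{e}t}) \rightarrow \site(\mathfrak{X}_{\eta,q\acute{e}t})$ constructed above (generic fibers of adically kummer \'etale morphisms are quasi-\'etale) induces, after passing to hypercomplete $\infty$-topoi, taking shapes, and $\notp$-profinitely completing, precisely the arrow $\profqettoptype_{\notp}\mathcal{X} \rightarrow \proffkettoptype_{\notp}\mathfrak{X}$; it suffices to show this arrow is an equivalence.

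Since $\mathfrak{X}$ is fine and algebraizably formally $R$-log smooth, and special formal affines are quasicompact, I would choose an adically \'etale hypercover $\mathfrak{U}_\bullet \rightarrow \mathfrak{X}$ in which each $\mathfrak{U}_n$ is a disjoint union of special formal affines of exactly the ``local model'' type to which Theorem \ref{local-comparison-berkovich-log} applies: completions along a closed subset of the special fiber of affine, flat, finite-type, fine vertical log smooth $R$-schemes carrying a global chart. Here one uses that being of local-model type is a condition which is local for the adically \'etale topology and is stable under the operations used in refining a \v{C}ech nerve to a hypercover --- an adically \'etale morphism over an algebraizably formally $R$-log smooth base is again algebraizably formally $R$-log smooth (by Lemma \ref{formal-etale-structure-lemma}, since \'etale over log smooth is log smooth and algebraizability is preserved), and fineness, verticality, and the existence of charts survive adically \'etale pullback and further refinement. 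Then I would run descent on both sides. Because $\Pi_\infty$ and the $\notp$-profinite completion functor are left adjoints, Theorem \ref{descent-of-shape} applied with $P = Q$ the adically kummer \'etale topology gives $\proffkettoptype_{\notp}\mathfrak{X} \simeq \colim_{[n] \in \Delta^{op}} \proffkettoptype_{\notp}\mathfrak{U}_n$ (note $\mathfrak{U}_\bullet$, being an adically \'etale hypercover, is a fortiori an adically kummer \'etale one). On the analytic side, the generic fiber functor preserves fiber products and coskeleta, the generic fiber of an adically \'etale morphism is quasi-\'etale, and surjectivity on special fibers forces surjectivity on generic fibers through the reduction map; hence $(\mathfrak{U}_\bullet)_\eta \rightarrow \mathcal{X}$ is a quasi-\'etale hypercover, and Theorem \ref{descent-of-shape} with $P = Q$ the quasi-\'etale topology gives $\profqettoptype_{\notp}\mathcal{X} \simeq \colim_{[n] \in \Delta^{op}} \profqettoptype_{\notp}(\mathfrak{U}_n)_\eta$.

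Applying Theorem \ref{local-comparison-berkovich-log} levelwise to each $\mathfrak{U}_n$, and using the naturality of the whole construction --- so that the levelwise equivalences assemble into a map of simplicial objects compatible with the comparison map coming from the morphism of sites --- and passing to colimits yields
\[ \profqettoptype_{\notp}\mathcal{X} \simeq \colim_{[n]} \profqettoptype_{\notp}(\mathfrak{U}_n)_\eta \simeq \colim_{[n]} \proffkettoptype_{\notp}\mathfrak{U}_n \simeq \proffkettoptype_{\notp}\mathfrak{X}, \]
and tracing through the construction identifies this composite with the map in the statement. The main obstacle I expect is the bookkeeping around the hypercover: arranging that \emph{every} term $\mathfrak{U}_n$ genuinely satisfies the somewhat restrictive hypotheses of Theorem \ref{local-comparison-berkovich-log} (in particular the algebraizability and a global chart, simultaneously on all terms and compatibly with faces and degeneracies), and verifying that the generic fiber functor really does carry the chosen adically \'etale hypercover of $\mathfrak{X}$ to a quasi-\'etale hypercover of $\mathcal{X}$ in the strong sense, i.e. with the covering condition at every level and not merely at level zero. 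Everything else --- colimit-preservation and the commutativity of the final diagram --- is formal given the $\infty$-categorical framework already in place.
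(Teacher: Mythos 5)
Your proposal is correct and takes essentially the same route as the paper's proof: express both $\profqettoptype_{\notp}\mathcal{X}$ and $\proffkettoptype_{\notp}\mathfrak{X}$ as colimits over a hypercover whose terms satisfy the hypotheses of Theorem \ref{local-comparison-berkovich-log}, apply that local comparison levelwise, and glue using that the shape and $\notp$-profinite completion functors are left adjoints and hence preserve colimits. The bookkeeping you flag as the main obstacle is resolved in the paper by a simpler choice: one takes the \v{C}ech nerve ($0$-coskeleton) of a Zariski cover by special formal affines of local-model type, and the separatedness hypothesis guarantees every level is again affine and satisfies the local hypotheses, so no refinement to a general adically \'etale hypercover is needed.
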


\begin{proof}
Cover $\mathfrak{X}$ by an affine cover $\mathfrak{X} = \cup_i \mathfrak{U}_i$ where each $\mathfrak{U_i}$ satisfies the hypothesis of the local comparison theorem above. We can then take the associated simplicial formal scheme 
\[ \mathfrak{U}_\bullet = \cosk_0 \Big[ \coprod_i \mathfrak{U}_i \rightarrow \mathfrak{X} \Big] \]
which is an \'etale hypercover of $\mathfrak{X}$. Further, by the separated hypothesis, we see that the simplicial formal scheme $\mathfrak{U}_\bullet$ is levelwise affine, and that each level also meets the hypothesis of the local comparison theorem.

We can now consider $h^\mathfrak{U}_\bullet$ as a simplicial object of the hypercomplete adically \'etale $\infty$-topos $\hootopos \mathfrak{X}_{a\acute{e}t}$. Now, post-compose this with the `slice topos' functor
\[ \Delta^{op} \rightarrow \hootopos \mathfrak{X}_{a\acute{e}t} \rightarrow \rtop / \hootopos \mathfrak{X}_{a\acute{e}t} \]
This functor preserves colimits by Proposition 6.3.5.14 of \cite{lurie2009higher}. The shape functor is defined as a left adjoint and thus preserves colimits. Furthermore the $\notp$-profinite completion functor is also a left adjoint and thus also preserves colimits.

Now we see that
\[ \profqettoptype_{\notp} \mathcal{X} \simeq \colim_{[n] \in \Delta^{op}} \profqettoptype_{\notp} \mathcal{U}_{[n]} \]
\noindent and similarly that
\[ \proffkettoptype_{\notp} \mathfrak{X} \simeq \colim_{[n] \in \Delta^{op}} \proffkettoptype_{\notp} \mathfrak{U}_{[n]} \]
But we have a natural transformation between the two diagrams of $\notp$-profinite shapes that induces a level wise equivalence by the local comparison above. Thus it glues together into a global equivalence of $\notp$-profinite simplicial sets.
\end{proof}

Picking a homotopy inverse to $\qettoptype \mathcal{X} \rightarrow \ettoptype \mathcal{X}$ and $\notp$-profinitely completing it immediately implies the following corollary.

\begin{corollary}
Let $\mathfrak{X}$ be a locally noetherian separated special formal scheme with canonical log structure. Assume that $\mathfrak{X}$ is fine as a log formal scheme, and also algebraizably formally $R$-log smooth. Put $\mathcal{X}$ for its generic fiber considered as a $K$-analytic space.

There is an equivalence of $\notp$-profinite spaces

\[ \profettoptype_{\notp} \mathcal{X} \simeq \proffkettoptype_{\notp} \mathfrak{X} \]
\end{corollary}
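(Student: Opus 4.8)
The strategy is to chain the global comparison of Theorem~\ref{globalcomparisonberklog} with the identification of the quasi-\'etale and \'etale homotopy types supplied by Corollary~\ref{comparison-qet-et}; the only thing that requires a word is the verification that $\mathcal{X}=\mathfrak{X}_\eta$ meets the hypotheses of the latter, namely that it is hausdorff and strictly $K$-analytic.

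First I would recall that since $R$ is a complete discrete valuation ring, Berkovich's generic fibre functor sends a special formal scheme to a strictly $K$-analytic space: the local models $\spf R\{T_1,\dots,T_m\}[[S_1,\dots,S_n]]$ have strictly $K$-affinoid generic fibres, and gluing along the affine cover $\mathfrak{X}=\bigcup_i\mathfrak{U}_i$ employed in the proof of Theorem~\ref{globalcomparisonberklog} exhibits $\mathcal{X}$ as strictly $K$-analytic. Separatedness of $\mathfrak{X}$ means the diagonal $\mathfrak{X}\to\mathfrak{X}\times_{\spf R}\mathfrak{X}$ is a closed immersion; passing to generic fibres, which commutes with fibre products, the diagonal of $\mathcal{X}$ is a closed immersion, so $\mathcal{X}$ is hausdorff. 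Thus Corollary~\ref{comparison-qet-et} applies and gives an equivalence $\qettoptype\mathcal{X}\simeq\ettoptype\mathcal{X}$ of pro-spaces.

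Now the $\notp$-profinite completion functor is a left adjoint, hence in particular a functor, so applying it to this equivalence produces $\profqettoptype_{\notp}\mathcal{X}\simeq\profettoptype_{\notp}\mathcal{X}$. Choosing a homotopy inverse to $\qettoptype\mathcal{X}\to\ettoptype\mathcal{X}$, completing it, and composing with the equivalence of Theorem~\ref{globalcomparisonberklog} yields
\[ \profettoptype_{\notp}\mathcal{X}\;\simeq\;\profqettoptype_{\notp}\mathcal{X}\;\simeq\;\proffkettoptype_{\notp}\mathfrak{X}, \]
which is exactly the asserted equivalence of $\notp$-profinite spaces.

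Since the real content has already been established in Theorems~\ref{local-comparison-berkovich-log} and~\ref{globalcomparisonberklog}, there is no serious obstacle here; the only point demanding any attention is the inheritance of ``hausdorff and strictly $K$-analytic'' by $\mathcal{X}$ from $\mathfrak{X}$, which is what licenses the use of Corollary~\ref{comparison-qet-et} in the global setting rather than only fibrewise over the chosen cover.
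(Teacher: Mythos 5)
Your argument is correct and is essentially the paper's own: the paper deduces the corollary by picking a homotopy inverse to $\qettoptype \mathcal{X} \rightarrow \ettoptype \mathcal{X}$, $\notp$-profinitely completing it, and composing with Theorem \ref{globalcomparisonberklog}. Your extra verification that the generic fiber of a separated special formal scheme over the discretely valued $R$ is hausdorff and strictly $K$-analytic (so Corollary \ref{comparison-qet-et} applies) is a point the paper leaves implicit, and it is handled correctly.
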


It also will imply the following, although we will include a proof to explain the gap from Theorem \ref{globalcomparisonberklog} to it.

\begin{corollary}
Let $V_R$ be a locally of finite type log smooth $R$-scheme over a complete discrete valuation ring $R$, and let $p$ be the residue characteristic of $R$. Further assume that the morphism $V_R \to \spec R$ is a strict log morphism, so that in particular $V_K = V_R^{triv}$.

Then the open immersion of the two analytifications induces an equivalence of $\notp$-profinite \'etale homotopy types

\[ \profettoptype_{\notp} \big( \widehat{V_R} \big)_\eta \simeq \profettoptype_{\notp} \big(V_K\big)^{an} \]
\end{corollary}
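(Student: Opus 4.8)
The plan is to apply Theorem~\ref{globalcomparisonberklog} to the formal completion $\widehat{V_R}$ of $V_R$ along its special fibre, and then to close the ``gap'' between the generic fibre of this formal model — which is only a tube inside $(V_K)^{an}$ — and the full analytification, by running the comparison chain on the algebraic side. First I would reduce to the case that $V_R$ is separated, covering it by separated open subschemes and invoking \'etale hypercover descent of the $\notp$-profinite shape (Theorem~\ref{descent-of-shape}), exactly as in the proof of Theorem~\ref{globalcomparisonberklog}. Next, a strict log smooth morphism is smooth, so $V_R$ is in fact smooth over $\spec R$ and its special fibre $(V_R)_s$ is a reduced smooth divisor; consequently, equipping $\mathfrak{X}=\widehat{V_R}$ with its canonical log structure gives the same thing as pulling back the strict log structure of $V_R$ over $\spec R$ (a function vanishing only along $(V_R)_s$ is, locally, a unit times a power of $\pi$), because we are completing along the \emph{whole} special fibre rather than a proper closed subscheme. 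Thus $\mathfrak{X}$ is a fine, vertical, locally noetherian, separated special formal scheme, and it is algebraizably $R$-log smooth — this is precisely the ``$Z=X_s$'' case of that definition, witnessed by $V_R/\spec R$ — hence a fortiori algebraizably formally $R$-log smooth. Theorem~\ref{globalcomparisonberklog} and Corollary~\ref{comparison-qet-et} then give $\profettoptype_{\notp}(\widehat{V_R})_\eta\simeq\proffkettoptype_{\notp}\widehat{V_R}$, and via Berkovich's generic-fibre functor $(\widehat{V_R})_\eta$ is the open subspace of $(V_K)^{an}$ cut out as the tube of $(V_R)_s$ under reduction, the inclusion being the open immersion of the statement.

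It then remains to identify $\proffkettoptype_{\notp}\widehat{V_R}$ with $\profettoptype_{\notp}(V_K)^{an}$, which is the gap: Theorem~\ref{globalcomparisonberklog} only sees the tube. I would transport everything to the algebraic log scheme $V_R$. Using Berkovich's identification of the adically (kummer) \'etale topos of a special formal scheme with the (kummer) \'etale topos of its closed fibre (Proposition~2.1 of \cite{berkovich1996vanishing}), the Elkik-type algebraization of adically kummer \'etale covers, and the rigidity of the henselian pair $(V_R)_s\hookrightarrow V_R$ — the very inputs behind isomorphisms $(4)$ and $(5)$ in the proof of Theorem~\ref{local-comparison-berkovich-log}, now globalized by \'etale hypercover descent (Theorem~\ref{descent-of-shape}) along an affine cover of $\widehat{V_R}$ — one obtains $\proffkettoptype_{\notp}\widehat{V_R}\simeq\profettoptype^{k\acute{e}t}_{\notp}V_R$. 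Since $V_R\to\spec R$ is strict log smooth, its trivial locus is exactly $V_R^{triv}=V_K$, and the Fujiwara--Kato--Nakayama comparison for log smooth families (again isomorphism $(4)$ of Theorem~\ref{local-comparison-berkovich-log}) identifies $\profettoptype^{k\acute{e}t}_{\notp}V_R$ with $\profettoptype_{\notp}V_K$ — away from $p$, the tame-ramification constraints along $(V_R)_s$ become invisible. Finally the Artin comparison (Theorem~\ref{artin-comparison-ell}, applied componentwise to the smooth $K$-scheme $V_K$, which is a disjoint union of smooth $K$-varieties) gives $\profettoptype_{\notp}V_K\simeq\profettoptype_{\notp}(V_K)^{an}$. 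All the functors in sight — generic fibre, completion, kummer-\'etale/\'etale comparison, analytification — are compatible with the open immersion $(\widehat{V_R})_\eta\hookrightarrow(V_K)^{an}$, so the composite equivalence is the one it induces, and that it is an equivalence can be re-checked directly from Proposition~\ref{notpprofiniteshapeequivalence}.

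The main obstacle is precisely this middle step: comparing the \emph{formal/completed} kummer \'etale homotopy type of $\widehat{V_R}$ with the \emph{algebraic} \'etale homotopy type of the generic fibre $V_K$, rather than merely with its tube. This is where strictness is indispensable — it forces $V_R^{triv}=V_K$, so that after $\notp$-completion kummer \'etale and ordinary \'etale theory coincide — and where one leans on the log-smooth invariance of Fujiwara--Kato--Nakayama together with Elkik/Fujiwara--Gabber rigidity. The genuinely fiddly part is organizing these so that they are compatible with the various change-of-site functors, so that the final equivalence is literally the map induced by the open immersion and not merely an abstract equivalence; the rest is an assembly of results already established above.
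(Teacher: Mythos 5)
Your overall architecture coincides with the paper's: Theorem \ref{globalcomparisonberklog} handles $(\widehat{V_R})_\eta$ versus $\proffkettoptype_{\notp}\widehat{V_R}$, the Fujiwara--Kato--Nakayama comparison handles the kummer \'etale type of $V_R$ versus $V_K$, and Theorem \ref{artin-comparison-ell} handles $V_K$ versus $(V_K)^{an}$; as you say, all the weight falls on the middle comparison $\proffkettoptype_{\notp}\widehat{V_R}\simeq \profettoptype^{k\acute{e}t}_{\notp}V_R$. The paper justifies that step by a single global assertion (both sides compute nearby-cycle cohomology, via the Berkovich/Huber comparisons already cited in Theorem \ref{local-comparison-berkovich-log}), whereas you try to manufacture it from the local inputs $(4)$--$(5)$ of Theorem \ref{local-comparison-berkovich-log} glued by hypercover descent. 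That is where your argument has a genuine gap.

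Two concrete problems. First, $(V_R)_s\hookrightarrow V_R$ is not a henselian pair when $V_R$ is merely locally of finite type over $R$ (already for $V_R=\mathbb{A}^1_R$ the pair $(R[T],(\pi))$ is not henselian), so Elkik/Gabber-type rigidity does not apply to it; in the local comparison theorem rigidity and algebraization are applied to $X=\spec A$ with $A$ a \emph{complete} special $R$-algebra, and such an $X$ is not an open, nor even a finite-type, piece of $V_R$. Second, descent along an affine cover $\{\spf A_i\}$ of $\widehat{V_R}$ reassembles $\widehat{V_R}$ (equivalently its tube), not $V_R$: the maps $\spec A_i\to V_R$ do not form part of any \'etale hypercover of $V_R$, and their images miss the points of the generic fibre that do not specialize into the chosen chart (for $A=R\{T\}$, the closed points $T=a$ with $|a|>1$ are not in the image of $\spec A\to \mathbb{A}^1_R$). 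So the colimit you propose computes $\proffkettoptype_{\notp}\widehat{V_R}$ again and never produces $\profettoptype^{k\acute{e}t}_{\notp}V_R$; the equivalence you need is irreducibly global, which is exactly why the paper phrases it through nearby cycles rather than by gluing the local comparison. The remaining steps of your proposal --- checking that $\widehat{V_R}$ with its canonical log structure satisfies the hypotheses of Theorem \ref{globalcomparisonberklog}, then invoking Fujiwara--Kato--Nakayama and Theorem \ref{artin-comparison-ell} --- match the paper and are fine.
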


\begin{proof}
By Theorem \ref{artin-comparison-ell}, we have
\[ \profettoptype_{\notp} \big(V_K\big)^{an} \simeq \profettoptype_{\notp} \big(V_K\big) \]
by the work of Fujiwara, Kato, and Nakayama we have that the $\notp$-profinite \'etale homotopy type of $V_K$ is equivalent to the $\notp$-profinite kummer \'etale homotopy type of $V_R$.
\[ \profettoptype_{\notp} \big(V_K\big) \simeq \profiniteshape_{\notp} \big(\hootopos V_R^{k\acute{e}t} \big) \]
Moving the other way in the diagram, Theorem \ref{globalcomparisonberklog} states that
\[ \profettoptype_{\notp} \big( \widehat{V_R} \big)_\eta \simeq \proffkettoptype_{\notp} \widehat{V_R}\]
and finally we observe that
\[ \proffkettoptype_{\notp} \widehat{V_R} \simeq \profiniteshape_{\notp} \big( \hootopos V_R^{k\acute{e}t}\big)\]
since the cohomology of both computes nearby cycles.
\end{proof}

\bibliographystyle{plain}
\bibliography{./references}

\end{document}